\newtheorem{Th}{Theorem}[section]
\newtheorem{Lemma}[Th]{Lemma}
\newtheorem{Cor}[Th]{Corollary}
 \theoremstyle{definition}
\newtheorem{Def}[Th]{Definition}
\newtheorem{Rem}[Th]{Remark}
\newtheorem{?}[Th]{Problem}
\numberwithin{equation}{section}
\numberwithin{equation}{section}
\begin{document}

\bibliographystyle{amsalpha}

\title{ Rotational symmetry of uniformly 3-convex translating solitons of mean curvature flow in higher dimensions}

\begin{abstract}
  In this paper, we generalize the result of \cite{zhu2020so} to higher dimension. We prove that uniformly 3-convex translating solitons of  mean curvature flow in $\mathbb{R}^{n+1}$ which arise as blow up limit of embedded, mean convex mean curvature flow must have $SO(n-1)$ symmetry.
\end{abstract}

\author{Jingze Zhu}
\address{Department of Mathematics, Columbia University, New York, NY 10027}
\email{zhujz@math.columbia.edu}
\maketitle

\section{Introduction}
It is known that the blow up limit of an embedded, mean convex mean curvature
flow is a convex, noncollapsed ancient solution. This was first proved by the seminal work of White \cite{white2000size} \cite{white2003nature} and later streamlined by several authors (see \cite{sheng2009singularity} , \cite{andrews2012noncollapsing}, \cite{haslhofer2017mean}).

Recently, lots of work have been done in studying the convex noncollapsed ancient solutions. For example, in the uniform two convex
case, the classification is complete: Haslhofer \cite{haslhofer2015uniqueness} proved that the only convex, uniformly two convex, noncollapsed translator is the Bowl soliton, Brendle and Choi \cite{brendle2018uniqueness} \cite{brendle2019uniqueness} proved that the only strictly convex, uniformly two convex, noncollapsed noncompact ancient solution must be the Bowl soliton.
Angenent, Daskalopoulos and Sesum \cite{angenent2020uniqueness} proved that the only convex, uniformly two convex, noncollapsed compact solution which is not the shrinking sphere must be  the unique ancient Oval. The uniqueness is up to scaling
and translation.

In the uniformly 3-convex case, however, little classification results are known so far.
In the previous paper \cite{zhu2020so}, we proved that the convex, uniformly 3-convex, noncollapsed translator has $SO(2)$ symmetry in $\mathbb{R}^4$.

In this paper, we generalize this result to higher dimensions, here is the main theorem:
\begin{Th}\label{Main Theorem}
Suppose that $M^n\subset \mathbb{R}^{n+1}$
is a complete, noncollapsed, convex, uniformly 3-convex smooth
translating soliton of mean curvature flow with a tip that attains maximal mean
curvature. Then M has SO(n-1) symmetry.
\end{Th}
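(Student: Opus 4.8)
The plan is to follow the strategy of \cite{zhu2020so} for $n=3$, itself modeled on the work of Brendle and Choi (\cite{brendle2018uniqueness}, \cite{brendle2019uniqueness}): first determine the asymptotic geometry of $M$ precisely, then show that the rotation vector fields which would generate the symmetry give rise to Jacobi fields that are forced to vanish. Throughout, I normalize so that the translation velocity is a fixed unit vector $V$ parallel to $e_{n+1}$, and translate $M$ so that the tip, at which $H$ attains $\sup_M H$ by hypothesis, sits at the origin.

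\emph{Step 1 (asymptotics).} Using convexity, the noncollapsing estimate \cite{andrews2012noncollapsing}, uniform $3$-convexity, and the structure theory for noncollapsed flows (see \cite{haslhofer2017mean}), I would first show that outside a large compact set $M$ is a graph over a generalized cylinder and is asymptotic, with a quantitative rate, to a rotationally symmetric translating soliton. Uniform $3$-convexity is exactly what pins down the admissible models: the inequality $\lambda_1+\lambda_2+\lambda_3\ge\beta H>0$ (with $\lambda_1\le\dots\le\lambda_n$ the principal curvatures) shows that the second fundamental form can degenerate in at most two directions at any point, and this, together with the translator equation, leaves only $SO(n-1)$-invariant models --- for instance the Bowl soliton, or a product $\mathbb{R}\times(\text{lower-dimensional Bowl soliton})$ --- each invariant under an $SO(n-1)$ fixing some $2$-plane $P\ni V$. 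The output of this step is the distinguished plane $P$ together with a decay rate, outside a large ball, for the component of $M$ that fails to be $SO(n-1)$-symmetric about $P$. This is one of the two places where the higher-dimensional case is genuinely harder than $n=3$, since the analysis must now be carried out uniformly in the extra directions of the neck.

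\emph{Step 2 (Jacobi fields and their decay).} For any Killing field $Z$ of a rotation of $\mathbb{R}^{n+1}$ fixing $V$, the function $u_Z=\langle Z,\nu\rangle$ solves the linearized translator equation $\mathcal{L}u_Z=0$, where $\mathcal{L}=\Delta_M+\langle V,\nabla_M\,\cdot\,\rangle+|A|^2$; likewise $\mathcal{L}\langle e_i,\nu\rangle=0$, and $\mathcal{L}H=0$ with $H>0$. Hence $M$ is $SO(n-1)$-symmetric about $P$ precisely when $u_Z\equiv 0$ for every $Z$ in the copy of $\mathfrak{so}(n-1)$ that fixes $P$. Since the asymptotic model of Step 1 is itself invariant under this $\mathfrak{so}(n-1)$, for each such $Z$ the function $u_Z$ decays at infinity; concretely one bounds $u_Z/H$ by comparison with $H$ and with barriers built from the model, after optimizing the choice of $P$ if necessary.

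\emph{Step 3 (vanishing, and conclusion).} It then remains to prove a Liouville-type statement: any solution of $\mathcal{L}u=0$ on $M$ that decays as in Step 2 and is ``rotational'', i.e.\ not a linear combination of the translational Jacobi fields $\langle e_i,\nu\rangle$, must vanish identically. I would establish this by a Carleman inequality for $\mathcal{L}$ on the asymptotic end combined with a maximum-principle analysis on the cap, showing that the relevant weighted operator has trivial kernel in the appropriate function space; equivalently, one can run an iteration (``improvement of symmetry'') yielding a doubling estimate $Q(2r)\le\tfrac12 Q(r)+(\text{error decaying faster than }Q)$ for the non-symmetry $Q(r)$ of $M$ at scale $r$, which together with Step 2 forces $Q\equiv 0$. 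This vanishing step is the main obstacle: the error terms measure precisely the failure of $M$ to be an exact cylinder over the model in the extra directions of the neck, and one must show they decay fast enough for the Carleman weight (or the iteration) to close --- this is where uniform $3$-convexity and the noncollapsing estimate are used decisively. Finally, once $u_Z\equiv 0$ for a basis of the $\mathfrak{so}(n-1)$ fixing $P$, each such $Z$ is tangent to $M$, so integrating its flow shows $M$ is invariant under the corresponding $SO(n-1)$ --- the assertion of Theorem~\ref{Main Theorem}.
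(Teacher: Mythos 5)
Your overall skeleton (reduce to local models via uniform $3$-convexity and the classification of uniformly $2$-convex ancient solutions, interpret the rotations as Jacobi fields $\langle Z,\nu\rangle$, and force them to vanish by an iteration) is in the right spirit, and your parenthetical mention of an ``improvement of symmetry'' iteration is in fact the mechanism the paper uses. But as written the proposal has a genuine gap at its foundation. In Step 1 you posit a single distinguished $2$-plane $P$ together with a quantitative decay rate, outside a compact set, for the failure of $M$ to be $SO(n-1)$-symmetric about $P$. No such global axis or rate is available a priori, and producing one is essentially the content of the theorem. What one actually gets from convexity, noncollapsing and uniform $3$-convexity is only local information: each point far from the tip lies in a large parabolic neighborhood close to a shrinking $S^{n-2}\times\mathbb{R}^2$ or to a translating $\mathrm{Bowl}^{n-1}\times\mathbb{R}$ (the canonical neighborhood lemmas), each carrying its own approximate $\mathfrak{so}(n-1)$ whose axis may vary from point to point. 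A nontrivial part of the paper is devoted to compatibility lemmas showing that these approximate Killing fields at overlapping neighborhoods agree up to controlled errors (Lemmas \ref{Lin Alg Cylinder A}--\ref{VF close Bowl times R}); the single global set of rotation vector fields only appears at the very end, as a limit of the fields $\mathcal{K}^{(j)}$ produced by the iteration. Your Steps 2--3 (decay of $u_Z$ by comparison with $H$, then a Liouville theorem) presuppose this global $P$, so the argument is circular as stated; moreover the proposed Carleman-inequality route would itself require precise global asymptotics of $M$ toward a rotationally symmetric model, which is not known at that stage.

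The second missing ingredient is the dichotomy of local models forced by mere $3$-convexity. Points far from the ``tip line'' are cylindrical and admit a neck-type improvement (Theorem \ref{Cylindrical improvement}, requiring a $2$-dimensional rather than $1$-dimensional parabolic analysis on $S^{n-2}\times\mathbb{R}^2$), but points near the edge of a $\mathrm{Bowl}^{n-1}\times\mathbb{R}$ region are not cylindrical, and there the symmetry does not improve on part of the parabolic boundary; one needs a separate improvement theorem (Theorem \ref{bowl x R improvement}) proved by iterating the cylindrical improvement away from the edge and then running a maximum principle for $e^{-\Phi+\lambda t}\langle K,\nu\rangle/(H-\mu)$ with a carefully chosen weight $\Phi$ in the splitting direction. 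Finally, passing from ``$2^{-j}\epsilon$-symmetric at height $\gtrsim 2^{j/100}$'' to exact symmetry on all of $M$ requires one more global maximum-principle argument on the sub-level sets of the height function, using strict convexity near the tip to prevent the limiting vector fields from degenerating. If you replace your Steps 1--3 by: (i) canonical neighborhoods, (ii) the two improvement theorems plus the compatibility lemmas, and (iii) the dyadic iteration in height followed by the final barrier argument, you recover the paper's proof.
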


The proof has many similarities with \cite{zhu2020so} and \cite{brendle2018uniqueness}. For readers' convenience and self consistency,
we follow these two papers closely and present necessary details as much as possible.

This paper is organized as follows. In section 3,  we discuss about the symmetry improvement, which is inspired by the Neck Improvement Theorem of \cite{brendle2018uniqueness} \cite{brendle2019uniqueness}. The spirit is to  show
that if a solution of mean curvature flow is $\epsilon$ symmetric in a large parabolic neighborhood, then it is $\epsilon/2$ symmetric at the center point.
We  prove Cylindrical Improvement Theorem
\ref{Cylindrical improvement}, this is a direct generalization of the Neck Improvement Theorem.
We need to analyze the 2D equation instead of 1D heat equation.
In the uniform 3-convex case we need an additional symmetry improvement (Theorem \ref{bowl x R improvement}) that works for Bowl$\times\mathbb{R}$, just as in our previous paper \cite{zhu2020so}.
The strategy is inspired from \cite{brendle2018uniqueness} and
\cite{angenent2020uniqueness}, we can iterate the Cylindrical Improvement to get much better symmetry along the boundary and use the barrier to control the symmetry
near the center of the parabolic neighborhood. However in our case the
boundary is not as simple. In fact there is some portions of the boundary on which
the symmetry don't improve. It is overcome by carefully choosing barrier functions.

In section 4, we argue exactly as in \cite{zhu2020so},
using the ingredients established in section 3.
First we prove the canonical neighborhood Lemma (Lemma \ref{canonical nbhd lemma}, \ref{blowdownnecklemma}). They tells us that away from a compact set, every point
lies on a large parabolic neighborhood that resembles $S^{n-2}\times\mathbb{R}^2$
or Bowl$\times\mathbb{R}$ if the soliton is not uniformly 2-convex. We briefly explain how this is done: first assume that the conclusion is not true,
then we find the contradicting sequence and by rescaling and passing to subsequential limit we obtain an convex noncollapsed, uniformly 3-convex ancient
solution that contains a line, and thus splits. The results of \cite{white2000size} \cite{white2003nature}, \cite{sheng2009singularity}, \cite{haslhofer2017mean} and
the  maximal principal \cite{hamilton1986four} are used crucially.
If we modulo the splitting, the remain solution is uniformly two convex,
thus is either compact or is
the Bowl soliton or $S^{n-2}\times\mathbb{R}$, by the classification \cite{brendle2018uniqueness}. The latter two noncompact models leads to the contradiction whereas we can exploit the translator equation (which in particular implies that the solution is a graph over a hyperplane) to rule out the compact models.
The final step is to combine the above ingredients and follow the argument of
Theorem 5.2 in \cite{brendle2018uniqueness} to finish the proof of the main theorem.

  \textbf{Acknowledgement:} The author would like to thank his advisor Simon Brendle for his helpful discussions and encouragement.\newline

\section{Preliminary}
In this section we give some definitions and basic facts about the mean curvature flow.

Recall that the mean curvature flow is a family of embeddings: $F_t: M^n\rightarrow\mathbb{R}^{n+1}$  which satisfies
\begin{align*}
  \frac{\partial F_t}{\partial t} & =\vec{H}
\end{align*}
Denote $M_t=F_t(M)$. A mean curvature solution is called ancient if $F_t$ exists on $t\in(-\infty,T)$ for some $T$.

If $F_t$ are complete and oriented, we can fix an orientation and globally define the normal vector $\nu$.
Then the mean curvature $H$ is defined to be $\vec{H}=H\nu$. 
The mean curvature flow is called \textbf{(strictly) mean convex}, if $H\geq0$ $(H>0) $ along the flow.

A mean convex mean curvature flow solution $M_t^n\subset\mathbb{R}^{n+1}$
  is called \textbf{uniformly $k$-convex}, if there is a positive constant $\beta$
  such that
  \begin{align*}
    \lambda_1+...+\lambda_k\geq \beta H
  \end{align*}
  along the flow, where $\lambda_1\leq\lambda_2\leq ...\leq \lambda_n$ are principal curvatures.
  
  In particular, any mean convex surface is uniformly $n$-convex, and being uniformly  $1$-convex is equivalent to being uniformly convex. 

An important special case for ancient  solution is the \textbf{translating solution}, which is  characterized by the equation
\begin{align*}
   H = \left<V,\nu\right>
\end{align*}
for some fixed nonzero vector $V$.  In this paper 
we usually use $\omega_{n}$ in place of $V$, where $\omega_n$ is a unit vector in the direction of the $x_n$ axis.

The family of surfaces $M_t=M_0+tV$ is a mean curvature flow provided that $M_0$ satisfies the translator equation.

For a point $x$ on a hypersurface $M^n\subset\mathbb{R}^{n+1}$ and radius $r$, we use $B_r(x)$ to denote the Euclidean ball and use $B_{g}(x,r)$ to denote the geodesic ball with respect to the metric $g$ on $M$ induced by the embedding.
for a space-time point $(\bar{x},\bar{t})$ in a mean curvature flow solution, $\hat{\mathcal{P}}(\bar{x},\bar{t},L,T)=B_{g(\bar{t})}(\bar{x},LH^{-1})\times [\bar{t}-TH^{-2},\bar{t}]$ where $H=H(\bar{x},\bar{t})$ (c.f \cite{huisken2009mean} pp.188-190) ).

\section{Symmetry Improvement}
\begin{Def}
A collection of vector fields $\mathcal{K} = \{K_{\alpha} : 1 \leq \alpha \leq \frac{(n-1)(n-2)}{2} \}$ in $\mathbb{R}^{n+1}$ is called normalized set of rotation vector fields, if there exists a matrix $S \in O(n+1)$, a set of orthonormal basis
$\{J_{\alpha} : 1 \leq \alpha \leq \frac{(n-1)(n-2)}{2} \}$ of $so(n-1) \subset so(n+1)$
and a point $q \in \mathbb{R}^{n+1}$ such that
\begin{align*}
  K_{\alpha}(x) & = SJ_{\alpha}S^{-1}(x-q)
\end{align*}
where we use the inner product $\left<A,B\right> = \text{Tr}(AB^{T})$ for matrices $A, B \in so(n-1)$.
\end{Def}

\begin{Rem}
  The choice of $S,q$ is not unique. In other words, we can always find different
  $S,q$ that represent the same normalized set of rotation vector field.
\end{Rem}

\begin{Def}
  Let $M_t$ be a mean curvature flow solution. We say that the space time point $(\bar{x}, \bar{t})$ is $(\epsilon, R)$ cylindrical if the parabolic neighborhood $\hat{P}(\bar{x},\bar{t}, R^2, R^2)$ is $\epsilon$ close
  (in $C^{10}$ norm) to a family of shrinking cylinders
  $S^{n-2}\times\mathbb{R}^2$ after a parabolic rescaling such that $H(\bar{x}, \bar{t}) = \sqrt{\frac{n-2}{2}}$.
\end{Def}

\begin{Def}
  Let $M_t$ be a mean curvature flow solution. We say that a space time point
  $(\bar{x}, \bar{t})$ is $\epsilon$ symmetric if there exists a normalized set of rotation vector fields
  $\mathcal{K} = \{K_{\alpha} : 1 \leq \alpha \leq \frac{(n-1)(n-2)}{2}\}$ such that $\max_{\alpha}|\left<K_{\alpha}, \nu\right> |H \leq \epsilon$ and
  $\max_{\alpha}|K_{\alpha}|H \leq 5n$ in the parabolic neighborhood
  $\hat{P}(\bar{x}, \bar{t}, 100n^{5/2}, 100^2n^5)$
\end{Def}

\begin{Lemma}\label{Lin Alg Cylinder A}
  Let $\mathcal{K}=\{K_{\alpha},1\leq\alpha\leq \frac{(n-1)(n-2)}{2}\}$
  be a normalized set of rotation vector fields in the form of
  \begin{align*}
    K_{\alpha}=SJ_{\alpha}S^{-1}(x-q)
  \end{align*}
  where
  $\{J_{\alpha}, 1\leq\alpha\leq \frac{(n-1)(n-2)}{2}\}$ is an
  orthonormal basis of $so(n-1)\subset so(n+1)$ and $S\in O(n+1), q\in\mathbb{R}^{n+1}$.
  Suppose that either one of the following happens:
  \begin{enumerate}
    \item on the cylinder $S^{n-2}\times\mathbb{R}^2$ in $\mathbb{R}^{n+1}$ (for which the
  $S^{n-2}$ factor has radius 1) :
  \begin{itemize}
     \item $\left<K_{\alpha},\nu\right> = 0$ in $B_{g}(p, 1)$ for each $\alpha$
     \item $\max_{\alpha}|K_{\alpha}| H \leq 5n$ in
      $B_{g}(p, 10n^{2})$
   \end{itemize}
    \item on $Bowl^{n-1}\times\mathbb{R}$ in $\mathbb{R}^{n+1}$:
  \begin{itemize}
     \item $\left<K_{\alpha},\nu\right> = 0$ in $B_{g}(p, 1)$ for each $\alpha$
     \item $\max_{\alpha}|K_{\alpha}| H \leq 5n$ at $p$
   \end{itemize}
  \end{enumerate}
  where $\nu$ is the normal vector, $g$ is the induced metric,
  $p$ is arbitrary point on either one of the model.
  Then $S,q$ can be chosen in such a way:
  \begin{itemize}
    \item $S\in O(n-1)\subset O(n+1)$
    \item $q=0$
  \end{itemize}
  In particular, for any orthonormal basis $\{J'_{\alpha}, 1\leq\alpha\leq \frac{(n-1)(n-2)}{2}\}$ of $so(n-1)$ there is a basis transform matrix
  $\omega\in O(\frac{(n-1)(n-2)}{2})$ such that
  $K_{\alpha}  = \sum_{\beta=1}^{\frac{(n-1)(n-2)}{2}}\omega_{\alpha\beta} J'_{\beta}x$
\end{Lemma}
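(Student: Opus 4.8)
The plan is to exploit rigidity of the rotation vector fields on the two explicit model surfaces. On the cylinder $S^{n-2}\times\mathbb{R}^2$, write points as $(y,z)$ with $y\in S^{n-2}\subset\mathbb{R}^{n-1}$ and $z\in\mathbb{R}^2$; the normal is $\nu=(y,0)$. A general element $SJ_\alpha S^{-1}\in so(n+1)$, applied to $x-q$, gives a Killing field on $\mathbb{R}^{n+1}$, and the first hypothesis says each of these is tangent to the cylinder along an open set $B_g(p,1)$. The key step is to show that a Killing field of $\mathbb{R}^{n+1}$ which is tangent to the cylinder along an open subset must be tangent everywhere, and hence must preserve the cylinder; this is an analytic continuation / unique continuation statement for the linear ODE system governing the flow of a Killing field, or can be seen directly because the condition $\langle A(x-q),\nu(x)\rangle=0$ on an open set of the real-analytic cylinder forces it on all of the cylinder by analyticity. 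Once each $K_\alpha$ is tangent to $S^{n-2}\times\mathbb{R}^2$, the corresponding one-parameter group preserves the cylinder, so it lies in the isometry group $O(n-1)\times (O(2)\ltimes\mathbb{R}^2)$ of the cylinder. But $SJ_\alpha S^{-1}$ is a skew-symmetric matrix conjugate to an element of $so(n-1)$, i.e.\ it has rank exactly $n-2$ (the rank of a generic element of $so(n-1)$ is $n-2$ when $n-1$ is odd, $n-1$ when even — here one must be slightly careful, but the span $\{SJ_\alpha S^{-1}\}$ is an entire conjugate copy of $so(n-1)$, and what matters is that this whole Lie subalgebra must sit inside the Lie algebra of the cylinder's isometry group).

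So the second step is purely Lie-theoretic: the subalgebra $S\,so(n-1)\,S^{-1}$, realized as affine Killing fields $x\mapsto SJ_\alpha S^{-1}(x-q)$, must be contained in $so(n-1)\oplus so(2)\oplus\mathbb{R}^2$ (the isometry algebra of the cylinder, with the $so(n-1)$ acting on the first $n-1$ coordinates about the axis and the rest acting on the $\mathbb{R}^2$ factor). Since $S\,so(n-1)\,S^{-1}$ is a simple Lie algebra of dimension $\frac{(n-1)(n-2)}{2}$ (for $n\geq 4$) isomorphic to $so(n-1)$, and the only such subalgebra of $so(n-1)\oplus so(2)\oplus\mathbb{R}^2$ of that dimension is the $so(n-1)$ summand itself (projections to the abelian factors $so(2)$ and $\mathbb{R}^2$ must vanish because $so(n-1)$ is perfect, so its image under any homomorphism to an abelian algebra is trivial), we conclude $S\,so(n-1)\,S^{-1}=so(n-1)$ as subalgebras of the affine algebra, with trivial translational part. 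From $S J_\alpha S^{-1}(x-q) = \sum_\beta c_{\alpha\beta} J_\beta x$ for all $x$, matching the linear parts gives $S J_\alpha S^{-1} = \sum_\beta c_{\alpha\beta} J_\beta \in so(n-1)$, hence the matrix $S$ normalizes $so(n-1)$ inside $O(n+1)$; and matching constant parts gives $S J_\alpha S^{-1} q = 0$ for all $\alpha$, i.e.\ $q\in\bigcap_\alpha \ker(SJ_\alpha S^{-1})$, which is the axis $\{0\}\times\mathbb{R}^2$ of the cylinder. Translating the center to the axis lets us take $q=0$. Finally, the normalizer of $so(n-1)$ in $O(n+1)$ is $O(n-1)\times O(2)$ up to the obvious identifications; the $O(2)$ part acts trivially by conjugation on $so(n-1)$ (it is in the centralizer), so modulo that harmless ambiguity we may take $S\in O(n-1)\subset O(n+1)$. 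This proves the first bullet conclusions, and the last assertion is then immediate: any two orthonormal bases of $so(n-1)$ differ by an element $\omega\in O(\frac{(n-1)(n-2)}{2})$, giving $K_\alpha = S(\sum_\beta \omega_{\alpha\beta} J'_\beta) S^{-1} x$, and after absorbing $S\in O(n-1)$ into the basis (or by the freedom in choosing $S$), $K_\alpha = \sum_\beta \omega_{\alpha\beta} J'_\beta x$.

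For case (2), the argument is the same in structure but the input hypothesis is weaker — we only know $\langle K_\alpha,\nu\rangle=0$ on $B_g(p,1)$, which is still an open set, so unique continuation on the real-analytic surface $\mathrm{Bowl}^{n-1}\times\mathbb{R}$ again promotes tangency to the whole surface; the bound $\max_\alpha |K_\alpha| H\le 5n$ is only needed at $p$ and plays no role in the Lie-theoretic conclusion, it is there to pin down normalization constants downstream. The isometry group of $\mathrm{Bowl}^{n-1}\times\mathbb{R}$ is $O(n-1)\ltimes\{\text{translations along the }\mathbb{R}\text{ factor}\}$ (the Bowl itself has isometry group $O(n-1)$ rotating about its axis, with no extra translations since it is strictly convex and not a cylinder), so the same dimension-count forces $S\,so(n-1)\,S^{-1}$ to be exactly the rotational $so(n-1)$, with $q$ on the Bowl's axis, which we normalize to $q=0$, $S\in O(n-1)$.

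The main obstacle I expect is the unique-continuation step: making rigorous that "$\langle A(x-q),\nu\rangle=0$ on an open geodesic ball implies it holds on the whole model." On the cylinder this can be done by hand (expand in the $S^{n-2}$ and $\mathbb{R}^2$ variables and use that the vanishing of a fixed quadratic-in-$x$ expression along a codimension-zero subset of a product of a sphere and a plane forces all coefficients to vanish), but on the Bowl one genuinely uses real analyticity of the soliton together with connectedness; one should also double-check the low-dimensional edge cases (e.g.\ $n=4$, where $so(3)$ is still simple and the count goes through, versus any degeneracies when $\frac{(n-1)(n-2)}{2}$ is small). A secondary subtlety is the precise statement of the normalizer of $so(n-1)$ in $O(n+1)$ and the resulting freedom in $S$; the Remark after the definition of normalized rotation vector fields already flags that $S,q$ are non-unique, so this is exactly the latitude we are allowed to use.
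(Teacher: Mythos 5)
Your proposal is essentially correct for $n\geq 4$, but it takes a genuinely different route from the paper. The paper argues entirely in coordinates: it expands $\langle SJ_{\alpha}S^{-1}(x-q),\nu\rangle=0$ as an explicit polynomial (resp.\ analytic, on $\mathrm{Bowl}^{n-1}\times\mathbb{R}$) identity on an open subset of the model, reads off the coefficient equations $(SJ_{\alpha}S^{-1})_{il}=0$ for $i\leq n-1$, $l=n,n+1$ and $(SJ_{\alpha}S^{-1}q)_i=0$, and then uses the fact that $\{J_{\alpha}\}$ spans $so(n-1)$ together with an elementary rank argument (choosing $y\neq 0$ annihilated by the bottom $2\times(n-1)$ block of $S$, possible because $n-1\geq 3>2$) to force $S$ to be block-diagonal. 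You replace this with unique continuation on the real-analytic model followed by Lie-theoretic rigidity: the copy of $so(n-1)$ spanned by the $K_{\alpha}$ must land in the algebra of ambient Killing fields tangent to the model, and perfectness of $so(n-1)$ kills its projection onto the solvable complement. This is conceptually cleaner and dispenses with the coefficient bookkeeping; the paper's computation is more elementary and produces the intermediate identities (\ref{LinAlg3})--(\ref{LinAlg4}) that it reuses verbatim to reduce case (2) to case (1). Two minor inaccuracies in your write-up do not affect the argument: $so(4)$ (the case $n=5$) is perfect but not simple, which is all you need; and the cylinder's tangent Killing algebra is $so(n-1)\oplus\bigl(so(2)\ltimes\mathbb{R}^2\bigr)$, a semidirect product, so you should project first onto the abelian quotient $so(2)$ and then onto the abelian ideal $\mathbb{R}^2$.

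The one genuine gap is $n=3$, which the lemma as stated covers and which the paper handles as a separate branch. There $so(n-1)=so(2)$ is abelian, perfectness gives nothing, and tangency alone cannot distinguish $SJS^{-1}=J$ from $SJS^{-1}=J'$, where $J'$ is the unit rotation of the $\mathbb{R}^2$ factor of $S^1\times\mathbb{R}^2\subset\mathbb{R}^4$: the field $J'x$ is tangent to the cylinder everywhere, and $J'$ is conjugate to $J$ in $so(4)$. The only way to exclude it is the hypothesis $\max_{\alpha}|K_{\alpha}|H\leq 5n$ on the large ball $B_g(p,10n^2)$ (a rotation about the wrong axis grows linearly along the $\mathbb{R}^2$ directions), which is exactly how the paper disposes of it. So your remark that the norm bound ``plays no role in the Lie-theoretic conclusion'' is true for $n\geq 4$ but is precisely the point of failure at $n=3$; you flag the low-dimensional edge cases but do not supply the needed argument there.
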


\begin{proof}

\textbf{Case (1)}  Let's first assume that $n\geq 4$, we use the coordinate $(x_1,...x_{n+1})$ in $\mathbb{R}^{n+1}$ and let the cylinder $S^{n-2}\times\mathbb{R}^2$ be
 represented by $\{x_1^2+...+x_{n-1}^2=1\}$

 Now $\langle K_{\alpha},\nu\rangle =0$ is equivalent to:
 \begin{align}\label{Lin Alg 1}
   \nu^{T}SJ_{\alpha}S^{-1}(x-q)=0
 \end{align}
 without loss of generality, we may choose $q$ such that
 \begin{align}\label{LinAlg Condition for q}
   q\perp \bigcap_{\alpha}\ker J_{\alpha}S^{-1}
 \end{align}

 Note that $SJ_{\alpha}S^{-1}$ is antisymmetric and $\nu^{T}=(x_1,...,x_{n-1},0,0)$
 on $S^{n-2}\times\mathbb{R}^2$, (\ref{Lin Alg 1}) is equivalent to
 \begin{align}\label{LinAlg2}
   \sum_{1\leq i\leq n-1} x_i \left(\sum_{l=n,n+1}(SJ_{\alpha}S^{-1})_{il}x_l - (SJ_{\alpha}S^{-1}q)_{i}\right) =  0
 \end{align}
 Since this holds on an open set of the cylinder, we obtain that
 \begin{align}
   (SJ_{\alpha}S^{-1})_{il} =&0, \text{ \  } i=1,...,n-1,\ l=n,n+1 \label{LinAlg3}\\
   (SJ_{\alpha}S^{-1}q)_{i} =&0, \text{ \  } i=1,...,n-1\label{LinAlg4}
 \end{align}
 holds for each $1\leq\alpha \leq\frac{(n-1)(n-2)}{2} $.

 Using the fact that $\{J_{\alpha}, 1\leq\alpha\leq \frac{(n-1)(n-2)}{2} \}$ is an orthonormal basis of $so(n-1)$
 in (\ref{LinAlg3}), we get:
 \begin{align*}
   S_{ij}S^{-1}_{kl} - S_{ik}S^{-1}_{jl} =0
 \end{align*}
 or equivalently (since $S^{-1}=S^{T}$ or $S^{-1}=-S^{T}$)
  \begin{align}\label{LinAlg5}
   S_{ij}S_{lk} - S_{ik}S_{lj} =0
 \end{align}
 for all $1\leq i,j,k\leq n-1$ and $l=n,n+1$.

 Next, since $n-1\geq 3$, we can choose nontrivial $(y_1,...y_{n-1})$ such that
 for $l=n,n+1$
 \begin{align}\label{LinAlg6}
   \sum_{k=1}^{n-1}S_{lk}y_k =0
 \end{align}
 Now we multiply (\ref{LinAlg5}) by $y_k$ and sum over $k=1,...,n-1$ to obtain that, for $1\leq i,j\leq n-1$ and $l=n,n+1$:
 \begin{align}\label{Lin Alg 7}
   \sum_{k=1}^{n-1}S_{ik}y_k S_{lj} = 0
 \end{align}
 The invertibility of $S$ implies that
 \begin{align}\label{Lin Alg 9}
   \sum_{k=1}^{n-1}S_{ik}y_k \neq 0
 \end{align}
 holds for at least one $i\leq n+1$. (\ref{LinAlg6}) implies that (\ref{Lin Alg 9}) can not hold for $i=n,n+1$, thus must hold for some $i\leq n-1$.
 At this point, (\ref{Lin Alg 7}) implies that
 \begin{align}\label{Lin Alg 8}
   S_{lj}=0
 \end{align}
 for all $l=n,n+1$ and $j=1,...,n-1$. That means $S$ preserves the direct sum decomposition, therefore we may choose $S\in O(n-1)\subset O(n+1)$.
 Moreover, (\ref{LinAlg4}) and (\ref{LinAlg Condition for q}) implies that $q=0$.
 Finally,$\{SJ_{\alpha}S^{-1}, 1\leq\alpha\leq \frac{(n-1)(n-2)}{2} \}$ is still an orthonormal basis of $so(n-1)$, therefore we can find basis transform matrix
 $\omega\in O(\frac{(n-1)(n-2)}{2})$ such that
  $K_{\alpha}  = \sum_{\beta=1}^{\frac{(n-1)(n-2)}{2}}\omega_{\alpha\beta} J'_{\beta}x$ for each given orthonormal basis $\{J'_{\alpha}, 1\leq\alpha\leq\frac{(n-1)(n-2)}{2}\}$ of $so(n-1)$.

  If $n=3$, the argument in Lemma 3.5 of \cite{zhu2020so} applies.
  However we give a brief proof here:
  Note that (\ref{LinAlg3}), (\ref{LinAlg4}) still applies. Since $so(2)$ is one dimensional, $\alpha\equiv 1$ and $J_{\alpha}=J$ is a rank 2 matrix.
  Since $SJS^{-1}$ is antisymmetric and rank 2, (\ref{LinAlg3}) implies that $SJS^{-1}=J$ or $J'$,
   where $J'$ is a rank 2 matrix with only two nonzero entries $J'_{34}=-J'_{43}=\frac{1}{\sqrt{2}}$.
   If $SJS^{-1}=J'$, then $|K|H\geq 5n^2>5n$, a contradiction. So $SJS^{-1}=J$ and (\ref{LinAlg Condition for q})(\ref{LinAlg3}) implies that $q=0$. Clearly $S$ can be chosen to be Id.

  \textbf{Case (2)} Note that on Bowl$\times\mathbb{R}$, we have $\nu^{T}=(\frac{u'(\bar{x})}{|\bar{x}|}\bar{x},-1,0)/\sqrt{1+u'^2}$ and $x_n=u(|\bar{x}|)$
 where $\bar{x}=(x_1,...,x_{n-1})$ and $u$ is the solution to the ODE:
  \begin{align*}
    \frac{u''}{1+u'^2}+\frac{(n-1)u'}{x}=1
  \end{align*}
  with initial condition $u(0)= u'(0)=0$. Note that $\frac{u'(r)}{r}$ is smooth even at $r=0$.

  (\ref{Lin Alg 1}) is equivalent to:
 \begin{align}\label{LinAlg II.2}
   \sum_{1\leq i\leq n-1} x_i &\left(-(SJ_{\alpha}S^{-1})_{ni}+\sum_{l=n,n+1}\frac{u'(\bar{x})}{|\bar{x}|}(SJ_{\alpha}S^{-1})_{il}x_l - \frac{u'(\bar{x})}{|\bar{x}|}(SJ_{\alpha}S^{-1}q)_{i}\right) \\
   -& (SJ_{\alpha}S^{-1})_{n,n+1}x_{n+1}+(SJ_{\alpha}S^{-1}q)_{n}=  0
 \end{align}
 Since this holds on an open set of a Bowl$\times\mathbb{R}$, we can first fix $|\bar{x}|, x_n, x_{n+1}$ and $(x_1,...,x_{n-1})$ moves freely on a small open set of $S^{n-2}_{|\bar{x}|}$, therefore:
 \begin{align}\label{Lin Alg II.1}
   -(SJ_{\alpha}S^{-1})_{ni}+\sum_{l=n,n+1}\frac{u'(\bar{x})}{|\bar{x}|}(SJ_{\alpha}S^{-1})_{il}x_l - \frac{u'(\bar{x})}{|\bar{x}|}(SJ_{\alpha}S^{-1}q)_{i}=0
 \end{align}
 for each $1\leq i\leq n-1$. Moreover,
  \begin{align}\label{Lin Alg II.4}
   -(SJ_{\alpha}S^{-1})_{n,n+1}x_{n+1}+(SJ_{\alpha}S^{-1}q)_{n}=0
 \end{align}

 Then we can let $x_{n+1}$ move freely in a small open set for (\ref{Lin Alg II.1}) (\ref{Lin Alg II.4}), since $u'(r)/r$ is never 0, we  get

 \begin{align}\label{Lin Alg II.5}
   (SJ_{\alpha}S^{-1})_{i,n+1}=&0
 \end{align}
 for  all $ i\leq n$.

 Next, $\frac{u'(r)}{r}$ is not a constant function in any open interval, therefore
 \begin{align}\label{Lin Alg II.6}
      (SJ_{\alpha}S^{-1})_{in}=&0 \\
      (SJ_{\alpha}S^{-1}q)_i=&0
 \end{align}
 for all $i\leq n-1$.

 Consequently, (\ref{LinAlg3}) and  (\ref{LinAlg4}) holds, and the conclusion follows immediately by the same argument as in case (1).
\end{proof}

Using similar computations and some basic linear algebra, we also have the following:
\begin{Lemma}\label{Lin Alg Cylinder B}
  Let  $\{J_{\alpha}, 1\leq\alpha\leq \frac{(n-1)(n-2)}{2}\}$ be an
  orthonormal basis of $so(n-1)\subset so(n+1)$ and $A\in so(n+1)$ such that
  $A\perp so(n-1)\oplus so(2)$.
  Suppose that either of the following scenarios happens:
  \begin{enumerate}
    \item on the cylinder $S^{n-2}\times\mathbb{R}^2$ in $\mathbb{R}^{n+1}$ (for which the
  $S^{n-2}$ factor has radius 1) we have:
    \begin{itemize}
      \item $\left<[A,J_{\alpha}]x+c_{\alpha},\nu\right> = 0$ in $B_{g}(p, 1)$ for each $\alpha$
    \end{itemize}

  \item on $Bowl^{n-1}\times\mathbb{R}$ in $\mathbb{R}^{n+1}$  we have:
    \begin{itemize}
     \item $\left<[A, J_{\alpha}]x+c_{\alpha},\nu\right> = 0$ in $B_{g}(p, 1)$ for each $\alpha$
    \end{itemize}
  \end{enumerate}
  where $\nu$ is the normal vector, $g$ is the induced metric,
  $p$ is arbitrary point on either one of the model.
  Then $A=0$ and $c_{\alpha}=0$.

\end{Lemma}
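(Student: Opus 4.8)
The strategy is to repeat, almost verbatim, the reduction carried out in the proof of Lemma \ref{Lin Alg Cylinder A}. The point is that $[A,J_\alpha]$ is again skew-symmetric (a commutator of two elements of $so(n+1)$ stays in $so(n+1)$), so $x\mapsto[A,J_\alpha]x+c_\alpha$ is a Killing field of $\mathbb{R}^{n+1}$ of exactly the same shape as the vector fields $K_\alpha=SJ_\alpha S^{-1}(x-q)$ handled there, with $[A,J_\alpha]$ in the role of $SJ_\alpha S^{-1}$ and $c_\alpha$ in the role of $-SJ_\alpha S^{-1}q$. Substituting $\nu^{T}=(x_1,\dots,x_{n-1},0,0)$ into $\langle[A,J_\alpha]x+c_\alpha,\nu\rangle=0$ on an open set of the cylinder in case (1), resp.\ the Bowl normal $\nu^{T}=(\frac{u'(|\bar x|)}{|\bar x|}\bar x,-1,0)/\sqrt{1+u'^2}$ on an open set of $Bowl^{n-1}\times\mathbb{R}$ in case (2), and arguing that the resulting polynomial identity (resp., in case (2), using as before that $u'(r)/r$ is nowhere zero and never locally constant) must have all its coefficients vanish, I obtain exactly the analogues of (\ref{LinAlg3}) and (\ref{LinAlg4}):
\begin{align*}
([A,J_\alpha])_{il}=0,\qquad (c_\alpha)_i=0\qquad (i=1,\dots,n-1;\ l=n,n+1)
\end{align*}
for every $\alpha$ (with the additional relation $(c_\alpha)_n=0$ available in case (2)). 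This step is mechanical given the computations already present in the proof of Lemma \ref{Lin Alg Cylinder A}.

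The new ingredient is the hypothesis $A\perp so(n-1)\oplus so(2)$. Relative to the orthogonal splitting $\mathbb{R}^{n+1}=\mathbb{R}^{n-1}\oplus\mathbb{R}^2$ (the first factor carrying the $S^{n-2}$, resp.\ the bowl base) this forces the two diagonal blocks of $A$ to vanish, so $A=\left(\begin{smallmatrix}0&B\\-B^{T}&0\end{smallmatrix}\right)$ for some $(n-1)\times 2$ matrix $B$, whereas $J_\alpha=\left(\begin{smallmatrix}\tilde J_\alpha&0\\0&0\end{smallmatrix}\right)$ with $\{\tilde J_\alpha\}$ an orthonormal basis of $so(n-1)$. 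A one-line block multiplication gives $[A,J_\alpha]=\left(\begin{smallmatrix}0&-\tilde J_\alpha B\\-B^{T}\tilde J_\alpha&0\end{smallmatrix}\right)$, so the relations $([A,J_\alpha])_{il}=0$ ($i\le n-1$, $l=n,n+1$) say precisely that $\tilde J_\alpha B=0$ for every $\alpha$. Writing $B=(b_1\,|\,b_2)$, this means $b_1,b_2\in\bigcap_\alpha\ker\tilde J_\alpha$. Since $\{\tilde J_\alpha\}$ spans $so(n-1)$ and $n-1\ge 2$, this common kernel is $\{0\}$: for $v\ne 0$ pick $w\perp v$ with $w\ne 0$, and note that the rotation generator of the plane spanned by $v$ and $w$ lies in $so(n-1)$, hence is a linear combination of the $\tilde J_\alpha$, and carries $v$ to $w\ne 0$. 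Hence $B=0$, i.e.\ $A=0$. With $A=0$ the defining relation collapses to $\langle c_\alpha,\nu\rangle=0$ on $B_g(p,1)$; after the harmless normalization that $c_\alpha$ be orthogonal to the directions in which the model is translation invariant (the analogue of (\ref{LinAlg Condition for q})), the relations $(c_\alpha)_i=0$ force $c_\alpha=0$, exactly as $q=0$ is deduced from (\ref{LinAlg4}) in Lemma \ref{Lin Alg Cylinder A}.

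I do not expect a real obstacle: the argument is a transcription of the proof of Lemma \ref{Lin Alg Cylinder A} together with the elementary commutator identity above. The single place the dimension matters is the triviality of $\bigcap_\alpha\ker\tilde J_\alpha$, which needs $\dim so(n-1)\ge 1$, i.e.\ $n\ge 3$; and the only care required, as in Lemma \ref{Lin Alg Cylinder A}, is to make sure the open set $B_g(p,1)$ is large enough that the polynomial (resp.\ functional) identities it supports really do force all coefficients to vanish.
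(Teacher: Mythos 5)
Your proof is correct and is exactly the argument the paper intends (the paper gives no proof, saying only ``similar computations and some basic linear algebra''): the substitution of the model normals yields the analogues of (\ref{LinAlg3})--(\ref{LinAlg4}) with $[A,J_\alpha]$ in place of $SJ_\alpha S^{-1}$, and the block computation $[A,J_\alpha]=\left(\begin{smallmatrix}0&-\tilde J_\alpha B\\-B^{T}\tilde J_\alpha&0\end{smallmatrix}\right)$ together with $\bigcap_\alpha\ker\tilde J_\alpha=\{0\}$ forces $A=0$. You are also right to flag that the conclusion $c_\alpha=0$ requires normalizing $c_\alpha$ away from the translation-invariant directions of the model (a constant field tangent to the $\mathbb{R}^2$, resp.\ $\mathbb{R}$, factor satisfies the hypothesis trivially); as stated the lemma silently assumes this, and in its application in Lemma \ref{Vector Field closeness on Cylinder} it is satisfied because there $c_\alpha\in\mathrm{Im}(J_\alpha)\subset\mathbb{R}^{n-1}\times\{0\}$.
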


\begin{Lemma}\label{Vector Field closeness on Cylinder}
  There exists constants $0 < \epsilon_{c} \ll 1$ and $C>1$ depending only on $n$
  with the following properties. Let $M$ be a hypersurface in $\mathbb{R}^{n+1}$
  which is $\epsilon_c$ close (in $C^3$ norm) to
  a geodesic ball in
  $ S_{\sqrt{2(n-2)}}^{n-2}\times \mathbb{R}^2 $
   of radius $20n^{5/2}\sqrt{\frac{2}{n-2}}$ and
  $\bar{x}\in M $ be a point that is $\epsilon_c$ close to $S_{\sqrt{2(n-2)}}^{n-2}\times \{0\}$.
  Suppose that $\epsilon \leq \epsilon_c$ and
  $\mathcal{K}^{(1)} = \{K^{(1)}_{\alpha}: 1 \leq \alpha \leq \frac{(n-1)(n-2)}{2}\}$,
  $\mathcal{K}^{(2)} = \{K^{(2)}_{\alpha}: 1 \leq \alpha \leq \frac{(n-1)(n-2)}{2}\}$, are two normalized set of rotation vector fields, assume that:
  \begin{itemize}
    \item $\max_{\alpha}|\left<K_{\alpha}^{(i)}, \nu\right>|H \leq \epsilon$ in
    $B_g(\bar{x}, H^{-1}(\bar{x}))\subset M$
    \item $\max_{\alpha}|K_{\alpha}^{(i)}|H \leq 5n$ in $B_{g}(\bar{x}, 10n^{5/2}H^{-1}(\bar{x}))\subset M$
  \end{itemize}
  for $i = 1 , 2$, where $g$ denotes the induced metric on $M$ by embedding.
  Then for any $L > 1$:
  \begin{align*}
    \inf\limits_{\omega\in O(\frac{(n-1)(n-2)}{2})}\sup\limits_{B_{LH(\bar{x})^{-1}}(\bar{x})}\max_{\alpha}
    |K_{\alpha}^{(1)}-\sum_{\beta = 1}^{\frac{(n-1)(n-2)}{2}}\omega_{\alpha\beta}K^{(2)}_{\beta}|H(\bar{x})
    \leq CL\epsilon
  \end{align*}
\end{Lemma}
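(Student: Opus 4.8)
The plan is to reduce to the case $L=1$ and then prove that case by a blow-up/contradiction argument, with Lemmas \ref{Lin Alg Cylinder A} and \ref{Lin Alg Cylinder B} supplying the rigidity input. For the reduction: once the bound holds for $L=1$ with some constant $C_n$, pick $\omega$ (nearly) realizing the infimum; then $W_\alpha:=K_\alpha^{(1)}-\sum_\beta\omega_{\alpha\beta}K_\beta^{(2)}$ is an affine vector field on $\mathbb{R}^{n+1}$ with $|W_\alpha|H(\bar x)\le C_n\epsilon$ on $B_{H(\bar x)^{-1}}(\bar x)$, which bounds $|W_\alpha(\bar x)|$ by $C_n\epsilon H(\bar x)^{-1}$ and the operator norm of the linear part of $W_\alpha$ by $2C_n\epsilon H(\bar x)$; hence $|W_\alpha|H(\bar x)\le 3C_nL\epsilon$ on $B_{LH(\bar x)^{-1}}(\bar x)$. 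So after normalizing $H(\bar x)=\sqrt{\tfrac{n-2}{2}}$ it suffices to bound $\Lambda:=\inf_\omega\sup_{B_{H(\bar x)^{-1}}(\bar x)}\max_\alpha|K_\alpha^{(1)}-\sum_\beta\omega_{\alpha\beta}K_\beta^{(2)}|H(\bar x)$ by $C_n\epsilon$.

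If this fails, then for $\epsilon_c=1/j$ there are hypersurfaces $M_j$, points $\bar x_j\in M_j$, parameters $\epsilon_j\le 1/j$, and pairs $\mathcal{K}^{(1)}_j,\mathcal{K}^{(2)}_j$ satisfying the hypotheses but with $\Lambda_j>j\epsilon_j$. Since the $K^{(i)}_{\alpha,j}$ are affine and $\le 5nH^{-1}$ on a definite geodesic ball, $\Lambda_j$ is bounded by a dimensional constant, and the parameters $(S,q)$ of the $\mathcal{K}^{(i)}_j$ range in a compact set once the (irrelevant) component of $q$ along $\bigcap_\alpha\ker(SJ_\alpha S^{-1})$ is discarded. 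After passing to a subsequence and translating, $M_j\to S^{n-2}_{\sqrt{2(n-2)}}\times\mathbb{R}^2$ in $C^3_{\mathrm{loc}}$ with a marked point $\bar x_\infty$ on $S^{n-2}_{\sqrt{2(n-2)}}\times\{0\}$, and $\mathcal{K}^{(i)}_j\to\mathcal{K}^{(i)}_\infty$, a normalized set of rotation vector fields on the limit with $\langle K^{(i)}_{\alpha,\infty},\nu\rangle=0$ on $B_g(\bar x_\infty,H^{-1})$ and $|K^{(i)}_{\alpha,\infty}|H\le 5n$ on the corresponding ball. By Lemma \ref{Lin Alg Cylinder A} both $\mathcal{K}^{(1)}_\infty$ and $\mathcal{K}^{(2)}_\infty$ are standard ($S\in O(n-1)$, $q=0$), hence agree up to an element of $O(\tfrac{(n-1)(n-2)}{2})$, so $\Lambda_j\to 0$. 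Now choose $\omega_j$ with $\sup_{B_{H^{-1}}(\bar x_j)}\max_\alpha|K^{(1)}_{\alpha,j}-\sum_\beta\omega_{j,\alpha\beta}K^{(2)}_{\beta,j}|H\le 2\Lambda_j$ and rescale: $V_{\alpha,j}:=\tfrac{H}{\Lambda_j}\big(K^{(1)}_{\alpha,j}-\sum_\beta\omega_{j,\alpha\beta}K^{(2)}_{\beta,j}\big)$. These are affine with $\sup_{B_{H^{-1}}(\bar x_j)}\max_\alpha|V_{\alpha,j}|\in[1,2]$, with $|\langle V_{\alpha,j},\nu\rangle|\le C_n\epsilon_j/\Lambda_j< C_n/j\to 0$ on $B_g(\bar x_j,H^{-1})$, and—being affine and normalized on $B_{H^{-1}}(\bar x_j)$—uniformly bounded on $B_g(\bar x_j,10n^{5/2}H^{-1})$; so a subsequence converges $V_{\alpha,j}\to V_{\alpha,\infty}$ in $C^\infty_{\mathrm{loc}}$, the $V_{\alpha,\infty}$ are not all zero, and $\langle V_{\alpha,\infty},\nu\rangle=0$ on $B_g(\bar x_\infty,H^{-1})$ on the model cylinder.

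Differentiating the parametrization $K_\alpha=SJ_\alpha S^{-1}(x-q)$ of the $\mathcal{K}^{(i)}_j$ at their common limit exhibits $V_{\alpha,\infty}$ as a tangent vector to the space of normalized sets of rotation vector fields:
\[
V_{\alpha,\infty}(x)=[A,\bar J_\alpha]x+\sum_\beta E_{\alpha\beta}\bar J_\beta x+c_\alpha ,
\]
where $\{\bar J_\alpha\}$ is the common limiting orthonormal basis of $so(n-1)$, $A\in so(n+1)$ is an infinitesimal tilt of the axis, $E\in so(\tfrac{(n-1)(n-2)}{2})$ an infinitesimal change of basis, and each $c_\alpha=-\bar J_\alpha p$ (with $p\in\mathbb{R}^{n+1}$ fixed) lies in the spherical block. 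Writing $A=A_1+A_2+A'$ under $so(n+1)=so(n-1)\oplus so(2)\oplus\mathcal{M}$, one has $[A_2,\bar J_\alpha]=0$ and $[A_1,\bar J_\alpha]=\sum_\beta F_{\alpha\beta}\bar J_\beta$ with $F\in so(\tfrac{(n-1)(n-2)}{2})$ ($\mathrm{ad}_{A_1}$ being skew for the trace form), so these are absorbed into the $E$-term; and since the $\bar J_\beta$ generate rotations of the spherical factor, $\langle\bar J_\beta x,\nu\rangle\equiv 0$ on the cylinder, whence $\langle V_{\alpha,\infty},\nu\rangle=0$ collapses to $\langle[A',\bar J_\alpha]x+c_\alpha,\nu\rangle=0$ on $B_g(\bar x_\infty,H^{-1})$. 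As $A'\perp so(n-1)\oplus so(2)$, Lemma \ref{Lin Alg Cylinder B} forces $A'=0$ and $c_\alpha=0$, so $V_{\alpha,\infty}(x)=\sum_\beta E_{\alpha\beta}\bar J_\beta x$ with $E\ne 0$.

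Finally, this contradicts the near-minimality of $\omega_j$: replacing $\omega_j$ by $\omega_j\exp(t_jE)$ for a suitable $t_j$ of order $\Lambda_j$ cancels the leading-order part of $K^{(1)}_{\alpha,j}-\sum_\beta\omega_{j,\alpha\beta}K^{(2)}_{\beta,j}$ along $\sum_\beta E_{\alpha\beta}\bar J_\beta x$, so for $j$ large $\sup_{B_{H^{-1}}(\bar x_j)}\max_\alpha|K^{(1)}_{\alpha,j}-\sum_\beta\omega_{j,\alpha\beta}K^{(2)}_{\beta,j}|H$ drops below $\Lambda_j$, contradicting the definition of $\Lambda_j$ as an infimum. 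Hence $\Lambda_j\le C_n\epsilon_j$ for large $j$, which proves the $L=1$ case and the lemma. I expect the genuine difficulty to lie precisely in this last step—establishing that the blow-up $V_{\alpha,\infty}$ carries \emph{only} a change-of-basis component (which requires sorting the cylinder-tangent directions, invisible to $\langle\cdot,\nu\rangle=0$, from the ones governed by Lemma \ref{Lin Alg Cylinder B}) and that such a component contradicts optimality of $\omega_j$. A technically smoother route, which I would probably take, is to define $\Lambda_j$ with an $L^2$ norm over $B_{H^{-1}}(\bar x_j)$ rather than a supremum: near-minimality then gives the honest orthogonality $V_{\alpha,\infty}\perp_{L^2}\{\sum_\beta E_{\alpha\beta}\bar J_\beta x:E\in so(\tfrac{(n-1)(n-2)}{2})\}$, forcing $E=0$ at once, and the stated supremum bound is recovered from the $L^2$ bound by interior estimates for affine vector fields.
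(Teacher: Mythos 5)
Your proposal is correct, and its skeleton coincides with the paper's: reduce to $L=1$ by affineness, blow up a contradicting sequence, identify the limiting vector fields via Lemma \ref{Lin Alg Cylinder A}, rescale the difference by its size, and kill the limit with Lemma \ref{Lin Alg Cylinder B}. The genuine divergence is in how the $O(\tfrac{(n-1)(n-2)}{2})$ gauge freedom is eliminated, which you correctly identify as the crux. The paper does not take a near-minimizing $\omega$: it constructs $\omega^{(j)}$ explicitly from the decomposition $S_{(1,j)}^{-1}S_{(2,j)}=\exp(A_j)S_j$ with $S_j$ preserving $\mathbb{R}^{n-1}\oplus\mathbb{R}^2$ and $A_j\perp so(n-1)\oplus so(2)$, choosing $\omega^{(j)}$ so that $S_j$ intertwines the two bases. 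With that choice the difference is exactly $P^j_\alpha(x-b_{(2,j)})+c^j_\alpha$ with $P^j_\alpha=S_{(1,j)}\bigl[J^{(1,j)}_\alpha-\exp(A_j)J^{(1,j)}_\alpha\exp(-A_j)\bigr]S_{(1,j)}^{-1}$, so the rescaled limit is $[A,J_\alpha]x+c_\alpha$ with \emph{no} change-of-basis component, and Lemma \ref{Lin Alg Cylinder B} alone produces the contradiction with the normalization. Your variational route --- near-minimizer, absorb the $so(n-1)\oplus so(2)$ part of the tilt into the basis change, let Lemma \ref{Lin Alg Cylinder B} kill $A'$ and $c_\alpha$, then rule out a residual $E\in so(\tfrac{(n-1)(n-2)}{2})$ by the first-order competitor $\omega_j\exp(t_jE')$ --- also closes, with two small points worth making explicit: the perturbation direction must be the conjugate $E'=\omega_\infty^{-1}E\,\omega_\infty$ (still antisymmetric, so the competitor stays in $O(\tfrac{(n-1)(n-2)}{2})$), and a factor-$2$ near-minimizer is enough because the competitor lands at $o(\Lambda_j)$, strictly below the infimum. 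The trade-off is clear: the paper front-loads the Lie-theoretic bookkeeping so that rigidity is invoked once and there is no optimality step; your version keeps the choice of $\omega$ soft at the cost of the extra improvement argument (and your $L^2$ variant would indeed make that last step the cleanest, since it yields honest orthogonality to the $\{\sum_\beta E_{\alpha\beta}\bar J_\beta x\}$ directions).
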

\begin{proof}
  The proof is analogous to \cite{brendle2019uniqueness},\cite{zhu2020so}. We only consider the case that $L=10$, for general $L$ use the fact that
  $K^{(i)}$ are affine functions. Throughout the proof, the constant $C$ depends only on $n$.

  Argue by contradiction, if the conclusion is not true.
  Then there exists a sequence of pointed hypersurfaces $(M_j, p_j)$
  that are $\frac{1}{j}$ close to a geodesic ball of radius $20n^{5/2}H_0^{-1}$ in $S_{\sqrt{2(n-2)}}^{n-2}\times \mathbb{R}^2 $ and $|p_j - (\sqrt{2(n-2)},0,...,0)| \leq \frac{1}{j}$,
   where $H_0 = \sqrt{\frac{n - 2}{2}}$ is the mean curvature of $S_{\sqrt{2(n-2)}}^{n-2}\times \mathbb{R}^2$
   and $g_j$ is the metric on $M_j$ induced by embedding.
   Moreover, there exists normalized set of rotation vector fields
   $\mathcal{K}^{(i,j)} = \{K^{(i,j)}_{\alpha}: 1\leq\alpha\leq \frac{(n-1)(n-2)}{2}\}$  and $\epsilon_j\leq\frac{1}{j}$ such that
   \begin{itemize}
     \item $\max_{\alpha}|\left<K^{(i,j)}_{\alpha}, \nu\right>|H\leq\epsilon_j$
      in $B_{g_j}(p_j, H(p_j)^{-1})$
     \item $\max_{\alpha}|K_{\alpha}^{(i,j)}|H\leq 5n$ in
      $B_{g_j}(p_j, 10n^{5/2}H(p_j)^{-1})$
   \end{itemize}
   for $i = 1,2$, but
   \begin{itemize}
     \item $\inf\limits_{\omega\in O(\frac{(n-1)(n-2)}{2})}\sup\limits_{B_{LH(\bar{x})^{-1}}(\bar{x})}\max_{\alpha}
    |K_{\alpha}^{(1,j)}-\sum_{\beta = 1}^{\frac{(n-1)(n-2)}{2}}\omega_{\alpha\beta}K^{(2,j)}_{\beta}|H(\bar{x})
    \geq j\epsilon_j$
   \end{itemize}

   Therefore, $M_j$ converges to $M_{\infty} = B_{g_{\infty}}(p_{\infty}, 20n^{5/2}H_0^{-1})\subset S_{\sqrt{2(n-2)}}^{n-2}\times \mathbb{R}^2 $
   and $H(p_j)\rightarrow \sqrt{\frac{n-2}{2}}$, where $p_{\infty}=(\sqrt{2(n-2)},0,...,0)$
   and $g_{\infty}$ denoted the induced metric on
   $S_{\sqrt{2(n-2)}}^{n-2}\times \mathbb{R}^2 $.

   Suppose that for each $i=1,2$ and $j\geq 1$,
   $K^{(i,j)}_{\alpha}(x) = S_{(i,j)}J^{(i,j)}_{\alpha}S_{(i,j)}^{-1}(x-b_{(i,j)})$
   where $S_{(i,j)}\in O(n+1)$ and $b_{(i,j)}\in\mathbb{R}^{n+1}$,
   $\{J^{(i,j)}_{\alpha}: 1\leq\alpha \leq\frac{(n-1)(n-2)}{2}\}$ is
    orthonormal basis of
   $so(n-1)\subset so(n+1)$. Without loss of generality we may assume that
   $\displaystyle b_{(i,j)}\perp \bigcap_{\alpha}\ker(J^{(i,j)}_{\alpha}S_{(i,j)}^{-1})$.
   Then
   \begin{align*}
     |b_{(i,j)}| \leq C\sum_{\alpha}|S_{(i,j)}J^{(i,j)}_{\alpha}S_{(i,j)}^{-1} b_{(i,j)}| \leq C(n)
   \end{align*}
   Therefore we can pass to a subsequence such that $S_{(i,j)}\rightarrow S_{(i,\infty)}$, $J_{\alpha}^{(i,j)}\rightarrow J_{\alpha}^{(i,\infty)}$
   and $b_{(i,j)}\rightarrow b_{(i,\infty)}$ for each $i,\alpha$.
   Consequently $K^{(i,j)}\rightarrow K^{(i,\infty)}_{\alpha} = S_{(i,\infty)}J_{\alpha}^{(i,\infty)}S_{(i,\infty)}^{-1}(x-b_{(i,\infty)})$ for $i=1,2$.

   The convergence implies that
   \begin{itemize}
     \item $\left<K^{(i,\infty)}_{\alpha},\nu\right> = 0$ in $B_{g_{\infty}}(p_{\infty,}, H_0^{-1})$ for each $\alpha$
     \item $\max_{\alpha}|K^{(i,\infty)}_{\alpha}| H \leq 5n$ in
      $B_{g_{\infty}}(p_{\infty,}, 10n^{5/2}H_0^{-1})$
   \end{itemize}
   Let's fix an orthonormal basis $\{J_{\alpha}: 1\leq\alpha \leq\frac{(n-1)(n-2)}{2}\}$  of $so(n-1)$. By Lemma \ref{Lin Alg Cylinder A} we have for each $\alpha$:
    \begin{flalign*}
      K_{\alpha}^{(i,\infty)}(x) = \sum_{\beta=1}^{\frac{(n-1)(n-2)}{2}}\omega^{(i,\infty)}_{\alpha\beta} J_{\beta}x
    \end{flalign*}
    for some $\omega^{(i,\infty)}\in O(\frac{(n-1)(n-2)}{2})$ .
    In particular we have
    \begin{align}\label{sjs-1=omega J}
      S_{(i,\infty)}J^{(i,\infty)}_{\alpha}S_{(i,\infty)}^{-1} = \sum_{\beta=1}^{\frac{(n-1)(n-2)}{2}}\omega^{(i,\infty)}_{\alpha\beta} J_{\beta}
    \end{align}
    and $b_{(i,\infty)}=0$ for $i=1,2$ (Note that $b_{(i,\infty)}\perp\bigcap_{\alpha}\ker(J^{(i,\infty)}_{\alpha}S^{-1}_{(i,\infty)}$ ).

    Since $\{J^{(i,\infty)}_{\alpha}, 1\leq\alpha\leq \frac{(n-1)(n-2)}{2}\}$ is an orthonormal basis of $so(n-1)$, we can find $\eta^{(i,j)}\in O(\frac{(n-1)(n-2)}{2})$ such that
    $J_{\alpha}^{(i,j)}=\sum_{\beta}\eta^{(i,j)}_{\alpha\beta}J_{\beta}^{(i,\infty)}$ . Then for each $i,j$
    \begin{align}\label{sjs-1=omega J 2}
      S_{(i,\infty)}J^{(i,j)}_{\alpha}S_{(i,\infty)}^{-1}=&\sum_{\beta}\eta^{(i,j)}_{\alpha\beta}S_{(i,\infty)}J_{\beta}^{(i,\infty)}S_{(i,\infty)}^{-1} \notag \\
      =& \sum_{\beta,\gamma}\eta^{(i,j)}_{\alpha\beta}\omega^{(i.\infty)}_{\beta\gamma}J_{\gamma}
    \end{align}
     Now (\ref{sjs-1=omega J 2}) means that $\{S_{(i,\infty)}J^{(i,j)}_{\alpha}S_{(i,\infty)}^{-1}, 1\leq\alpha\leq \frac{(n-1)(n-2)}{2}\}$ is an orthonormal basis of $so(n-1)$.

    Without loss of generality, we may assume that
    $S_{(1,\infty)}=S_{(2,\infty)} =Id$ and $\omega^{(1,\infty)}_{\alpha\beta} = \delta_{\alpha\beta}$, for otherwise we may replace $S_{(1,j)}$
    by $S_{(i,j)}S_{(i,\infty)}^{-1}$, replace $J_{\alpha}^{(i,j)}$ by $S_{(i,\infty)}J_{\alpha}^{(i,j)}S_{(i,\infty)}^{-1}$ for $i=1,2$
    and replace $J_{\alpha}$ by
    $\sum_{\beta=1}^{\frac{(n-1)(n-2)}{2}}\omega^{(1,\infty)}_{\alpha\beta} J_{\beta}$.

    Therefore, $S_{(1,j)}^{-1}S_{(2,j)}$ is close to Id for large $j$. Then we can find $S_j\in O(n+1)$
    and $A_j\in so(n+1)$ such that
    \begin{itemize}
     \item $S_{(1,j)}^{-1}S_{(2,j)} = \exp(A_j)S_j $
      \item $S_j$ preserves the direct sum decomposition $\mathbb{R}^{n-1}\oplus\mathbb{R}^2$
      \item $A_j\perp so(n-1)\oplus so(2)$
      \item $S_j\rightarrow Id$ and $A_j\rightarrow 0$
    \end{itemize}

   Since $S_j$ preserves the direct sum decomposition,
   we can  find basis transform matrix ${\omega}^{(j)}\in O(\frac{(n-1)(n-2)}{2})$ such that
   \begin{align*}
     S_j^{-1} J^{(1,j)}_{\alpha}S_{j} = \sum_{\beta=1}^{\frac{(n-1)(n-2)}{2}}{\omega}^{(j)}_{\alpha\beta} J_{\beta}^{(2,j)}
   \end{align*}
   for every $j$ and $\alpha$. Equivalently,
   \begin{align*}
      J^{(1,j)}_{\alpha} = \sum_{\beta=1}^{\frac{(n-1)(n-2)}{2}}{\omega}^{(j)}_{\alpha\beta} S_{j}J_{\beta}^{(2,j)}S_j^{-1}
   \end{align*}

   To see what the definition of $\omega^{(j)}$ implies, we compute the following:
   \begin{align*}
     \sum_{\beta=1}^{\frac{(n-1)(n-2)}{2}} \omega^{(j)}_{\alpha\beta}K_{\beta}^{(2,j)}(x) = & \sum_{\beta} \omega^{(j)}_{\alpha\beta}S_{(2,j)}J^{(2,j)}_{\beta}S_{(2,j)}^{-1}(x-b_{(2,j)})\\
     =& \sum_{\beta}\omega^{(j)}_{\alpha\beta} S_{(1,j)}\exp(A_j) S_j J_{\beta}^{(2,j)}S_j^{-1}\exp(-A_j)S_{(1,j)}^{-1} (x-b_{(2,j)}) \\
     =&  S_{(1,j)}\exp(A_j)J^{(1,j)}_{\alpha}\exp(-A_j)S_{(1,j)}^{-1}(x-b_{(2,j)})
   \end{align*}

     For each $\alpha$, define:
     \begin{align*}
       W_{\alpha}^j  = \frac{K_{\alpha}^{(1,j)} -\sum_{\beta} \omega^{(j)}_{\alpha\beta}K_{\beta}^{(2,j)} }{\sup\limits_{B_{10H(p_j)^{-1}}(p_j)}\max_{\alpha}|K_{\alpha}^{(1,j)} -\sum_{\beta} \omega^{(j)}_{\alpha\beta}K_{\beta}^{(2,j)} |}
     \end{align*}

     Let
     \begin{itemize}
       \item $P_{\alpha}^j = S_{(1,j)}[J^{(1,j)}_{\alpha}-\exp(A_j)J^{(1,j)}_{\alpha}\exp(-A_j)]S_{(1,j)}^{-1}$
       \item $c_{\alpha}^j = -S_{(1,j)}J^{(1,j)}_{\alpha}S_{(1,j)}^{-1}(b_{(1,j)}-b_{(2,j)})$
       \item $Q_j$ = $\sup\limits_{B_{10H(p_j)^{-1}}(p_j)}\max_{\alpha}|K_{\alpha}^{(1,j)} -\sum_{\beta} \omega^{(j)}_{\alpha\beta}K_{\beta}^{(2,j)} |$
     \end{itemize}

     By the above discussion we can find that for each $\alpha$:
     \begin{align*}
        K_{\alpha}^{(1,j)} -\sum_{\beta=1}^{\frac{(n-1)(n-2)}{2}} \omega^{(j)}_{\alpha\beta}K_{\beta}^{(2,j)}  &= P^j_{\alpha}(x-b_{(2,j)}) + c^j_{\alpha}
     \end{align*}

     By definition of $Q_j$ we have $|P^j_{\alpha}| + |c^j_{\alpha}| \leq CQ_j$. Consequently, for sufficiently large $j$:

     \begin{align*}
       & |P_{\alpha}^j| = |[A_j, J^{(1,j)}_{\alpha}] + o(|A_j|)| \leq CQ_j \\
      \Rightarrow & |A_j|\leq C\max_{\alpha}|[A_j,J^{(1,j)}_{\alpha}]| \leq CQ_j+o(|A_j|)\\
      \Rightarrow & |A_j|\leq CQ_j
     \end{align*}
     The second inequality used the fact that $A_j \perp so(n-1)\oplus so(2)$.

     Note that $J^{(1,j)}_{\alpha}\rightarrow J_{\alpha}$ by the previous discussion. Now we can pass to a subsequence such that $\frac{P_{\alpha}^j}{Q_j}\rightarrow [A,J_{\alpha}]$ and
     $\frac{c^j_{\alpha}}{Q_j}\rightarrow c_{\alpha}\in Im(J_{\alpha})$, .  Consequently:
     \begin{align*}
       W^j_{\alpha} \rightarrow W_{\alpha}^{\infty} = [A,J_{\alpha}]x +c_{\alpha}
     \end{align*}
     note that $W^{\infty}_{\alpha}$ are not all $0$, because
     $\sup\limits_{B_{10H(p_j)^{-1}}(p_j)}\max_{\alpha}|W^{\infty}_{\alpha}|=1$.

     On the other hand, by assumption
     \begin{itemize}
       \item $\max_{\alpha}|\left<K^{(1,j)}_{\alpha} - \sum_{\beta}\omega^{(j)}_{\alpha\beta}K_{\beta}^{(2,j)}, \nu\right>|\leq 2H^{-1}\epsilon_j$
       \item ${\sup\limits_{B_{10H(p_j)^{-1}}(p_j)}\max_{\alpha}|K_{\alpha}^{(1,j)} -\sum_{\beta} \omega^{(j)}_{\alpha\beta}K_{\beta}^{(2,j)} |\geq jH^{-1}}\epsilon_j$
     \end{itemize}
     therefore in the limit $\max_{\alpha}|\left<\nu, W_{\alpha}^{\infty}\right>|=0$ in
     $B_{g_{\infty}}(p_{\infty},1)$.

     By Lemma \ref{Lin Alg Cylinder B}, $W_{\alpha}^{\infty}\equiv 0$ for all $\alpha$, a contradiction.

\end{proof}

\begin{Lemma}\label{VF close Bowl times R}
  Given $\delta < 1$, there exists $0<\epsilon_b \ll 1$ and $C>1$ depending
  only on $n$ and $\delta$ with the following properties. Let $\Sigma^{n-1}\subset\mathbb{R}^n$ be the Bowl solition with maximal
  mean curvature 1 and $M\subset\mathbb{R}^{n+1}$ be a hypersurface with induced
  metric $g$. Suppose that  $q\in \Sigma\times\mathbb{R}$,
  and $M$ is a graph over the geodesic ball in $\Sigma\times\mathbb{R}$ of radius
  $2H(q)^{-1}$ centered at $q$. After rescaling by $H(q)^{-1}$ the graph $C^3$ norm is no more than $\epsilon_b$.
  Let $\bar{x}\in M$ be a point that has rescaled distance to $q$ no more than
  $\epsilon_b$.
  Suppose that $\epsilon \leq \epsilon_b$ and
  $\mathcal{K}^{(1)} = \{K^{(1)}_{\alpha}: 1 \leq \alpha \leq \frac{(n-1)(n-2)}{2}\}$,
  $\mathcal{K}^{(2)} = \{K^{(2)}_{\alpha}: 1 \leq \alpha \leq \frac{(n-1)(n-2)}{2}\}$, are two normalized set of rotation vector fields, assume that
  \begin{itemize}
    \item $\lambda_1+\lambda_2\geq \delta H$ in $B_{g}(\bar{x},H(\bar{x})^{-1})$,
        where $\lambda_1, \lambda_2$ are the lowest principal curvatures.
    \item $\max_{\alpha}|\left<K^{(i)}_{\alpha},\nu\right>|H\leq \epsilon$
    in  $B_{g}(\bar{x},H(\bar{x})^{-1})$
    \item $\max_{\alpha}|K_{\alpha}^{(j)}|H\leq 5n$ at $\bar{x}$
  \end{itemize}
  for $i=1,2$. Then for $L\geq 1$,
  \begin{align*}
    \inf\limits_{\omega\in O(\frac{(n-1)(n-2)}{2})}\sup\limits_{B_{LH(\bar{x})^{-1}}(\bar{x})}\max_{\alpha}
    |K_{\alpha}^{(1)}-\sum_{\beta = 1}^{\frac{(n-1)(n-2)}{2}}\omega_{\alpha\beta}K^{(2)}_{\beta}|H(\bar{x})
    \leq CL\epsilon
  \end{align*}

  \begin{Rem}
    The condition  $\lambda_1+\lambda_2\geq \delta H$ means that the point has bounded distance to $\{p\}\times\mathbb{R}$ (note that we have normalized the mean curvature),  where $p$ denotes the tip of the Bowl soliton.
  \end{Rem}

  \begin{proof}
    The proof is analogous to \cite{zhu2020so}. Let's make the convention that
    the tip of $\Sigma$ is the origin, the rotation axis is $x_n$, and
    $\Sigma$ encloses the positive part of $x_n$ axis.
    Argue by contradiction, if the assertion is not true, then there exists
    a sequence of points $q_j\in \kappa_j^{-1}\Sigma\times\mathbb{R}$ with
    $H(q_j)=1$ and a sequence of pointed hypersurfaces $(M_j, p_j)$ that are
    $1/j$ close to a geodesic ball $B_{\tilde{g_j}}(q_j,2)$ in
    $\kappa_j^{-1}\Sigma\times\mathbb{R}$, where $\tilde{g_j}$ are the induced
    metric on $\kappa_j^{-1}\Sigma\times\mathbb{R}$. Suppose that $|p_j-q_j|\leq 1/j$.

   Without loss of generality we may assume that
     $\left<q_j,\omega_{n+1}\right>=0$
    where $\omega_{n+1}$ is the unit vector in the $\mathbb{R}$ (splitting)
    direction.

  Further, there exists normalized set of rotation vector fields $\mathcal{K}^{(i,j)}$=$\{K^{(i,j)}_{\alpha}$, $1\leq \alpha\leq \frac{(n-1)(n-2)}{2}\}$   $i=1,2$ and
  $\epsilon_j<1/j$  such that
  \begin{itemize}
    \item $\max_{\alpha}|\left<K^{(i,j)}_{\alpha},\nu\right>|H\leq \epsilon_j$ in $B_{g_j}(p_j,H(p_j)^{-1})\subset M_j$
    \item $\max_{\alpha}|K^{(i,j)_{\alpha}}|H\leq 5$ at $p_j$
  \end{itemize}
  for $i=1,2$, but
  \begin{itemize}
    \item $\inf\limits_{\omega\in O(\frac{(n-1)(n-2)}{2})}\sup\limits_{B_{LH(\bar{x})^{-1}}(\bar{x})}\max_{\alpha}
    |K_{\alpha}^{(1,j)}-\sum_{\beta = 1}^{\frac{(n-1)(n-2)}{2}}\omega_{\alpha\beta}K^{(2,j)}_{\beta}|H(\bar{x})
    \geq j\epsilon_j$
  \end{itemize}

  Now the maximal mean curvature of $\kappa_j^{-1}\Sigma\times\mathbb{R}$ is $\kappa_j$.
  For any $j>2C/\delta$, by the first condition and approximation we know that $\frac{\lambda_1+\lambda_2}{H}\geq \frac{\delta}{2}$ around $q_j$.
  The asymptotic behaviour of the Bowl soliton indicates that $\frac{H(q_j)}{\kappa_j}<C(\delta)$ and $|q_j-\left<q_j,\omega_{n+1}\right>\omega_{n+1}|\kappa_j<C(\delta)$, thus
  $\kappa_j>C(\delta)^{-1}$ and $|q_j|=|q_j-\left<q_j,\omega_{n+1}\right>\omega_{n+1}|<C(\delta)$.

  We can then pass to a subsequence such that $q_j\rightarrow q_{\infty}$ and $\kappa_j\rightarrow\kappa_{\infty}>C(\delta)^{-1}>0$.
  Consequently $\kappa_j^{-1}\Sigma\times\mathbb{R}\rightarrow \kappa_{\infty}^{-1}\Sigma\times\mathbb{R}$ and  $B_{\tilde{g}}(q_j,2)\rightarrow B_{\tilde{g}_{\infty}}(q_{\infty},2)$ smoothly,
   where $B_{\tilde{g}_{\infty}}(q_{\infty},2)$ is the geodesic ball in $\kappa_{\infty}^{-1}\Sigma\times\mathbb{R}$.

  Combing with the assumption that $(M_j,p_j)$ is $1/j$  close to $(B_{\tilde{g}_j}(q_j,2),q_j)$ and $H(q_j)=1$, we have $M_j\rightarrow B_{\tilde{g}_{\infty}}(q_{\infty},2)$ with $p_j\rightarrow q_{\infty}$
  and $H(q_{\infty})=1$.

    We can write $K_{\alpha}^{(i,j)}(x)=S_{(i,j)}J^{(i,j)}_{\alpha}S_{(i,j)}^{-1}(x-b_{(i,j)})$ for some orthonormal basis $\{J^{(i,j)}_{\alpha}, 1\leq \alpha\leq \frac{(n-1)(n-2)}{2}\}$ of
    $so(n-1)$
    and assume that $(b_{(i,j)}-p_j)\perp \bigcap_{\alpha}\ker J^{(i,j)}_{\alpha}S_{(i,j)}^{-1}$. Then
    $|b_j-p_j|=|S_{(i,j)}J^{(i,j)}_{\alpha}S_{(i,j)}^{-1}(p_j-b_j)|\leq C(n)$ for large $j$.

    We can pass to a subsequence such that $S_{(i,j)}$ and $b_{(i,j)}$ converge to $S_{(i,\infty)}, b_{(i,\infty)}$ respectively.
   Consequently $K^{(i,j)}\rightarrow K^{(i,\infty)}_{\alpha} = S_{(i,\infty)}J^{(i,j)}_{\alpha}S_{(i,\infty)}^{-1}(x-b_{(i,\infty)})$ for $i=1,2$.

  The convergence implies that
   \begin{itemize}
     \item $\left<K^{(i,\infty)}_{\alpha},\nu\right> = 0$ in $B_{g_{\infty}}(p_{\infty,}, H_0^{-1})$
     \item $\max_{\alpha}|K^{(i,\infty)}_{\alpha}| H \leq 5n$ at
      $p_{\infty}$
   \end{itemize}
    By Lemma \ref{Lin Alg Cylinder A} we have:
    \begin{flalign*}
      K_{\alpha}^{(i,\infty)}(x) = \sum_{\beta=1}^{\frac{(n-1)(n-2)}{2}}\omega^{(i)}_{\alpha\beta} J_{\beta}x
    \end{flalign*}
    for some fixed orthonormal basis $\{J_{\alpha}, 1\leq \alpha\leq \frac{(n-1)(n-2)}{2}\}$ of $so(n-1)$.

    Arguing exactly as in Lemma \ref{Vector Field closeness on Cylinder} to reach
    a contradiction.
  \end{proof}

\end{Lemma}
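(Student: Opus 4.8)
The plan is to run the point-picking/compactness contradiction exactly as in the proof of Lemma~\ref{Vector Field closeness on Cylinder}, the one genuinely new issue being that a sequence of models Bowl$\times\mathbb{R}$, each rescaled so that $H=1$ at the base point, must be shown not to degenerate; this is precisely what the hypothesis $\lambda_1+\lambda_2\ge\delta H$ buys. As before, since each $K^{(1)}_\alpha-\sum_\beta\omega_{\alpha\beta}K^{(2)}_\beta$ is an affine vector field on $\mathbb{R}^{n+1}$, its supremum over $B_{LH(\bar x)^{-1}}(\bar x)$ is comparable to $L$ times its supremum over a ball of fixed radius, so it suffices to prove the estimate for, say, $L=2$.

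Suppose it fails for $L=2$. Then there are $\epsilon_j\to0$, scales $\kappa_j$, base points $q_j\in\kappa_j^{-1}\Sigma\times\mathbb{R}$ with $H(q_j)=1$, pointed hypersurfaces $(M_j,p_j)$ that are $\frac{1}{j}$-close in $C^3$ to $B_{\tilde g_j}(q_j,2)\subset\kappa_j^{-1}\Sigma\times\mathbb{R}$ with $|p_j-q_j|\le\frac{1}{j}$, and two families $\mathcal K^{(1,j)},\mathcal K^{(2,j)}$ of normalized rotation vector fields satisfying the three bulleted hypotheses with $\epsilon_j$ in place of $\epsilon$ but violating the conclusion at scale $j\epsilon_j$; after a translation in the $\mathbb{R}$-factor we arrange $\langle q_j,\omega_{n+1}\rangle=0$. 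The key step is geometric: by $C^3$-closeness $\frac{\lambda_1+\lambda_2}{H}\ge\frac{\delta}{2}$ near $q_j$ on $\kappa_j^{-1}\Sigma\times\mathbb{R}$, and on a Bowl$\times\mathbb{R}$ this ratio decays to $0$ as one moves away from the tip axis $\{p\}\times\mathbb{R}$ (the flat factor contributes one zero principal curvature and the remaining profile curvature is negligible against $H$ far out). Hence $q_j$ has rescaled distance $\le C(\delta)$ to $\{p\}\times\mathbb{R}$, which forces $H(q_j)$ to be comparable to $\kappa_j$, so $1\le\kappa_j\le C(\delta)$ and $|q_j|\le C(\delta)$. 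Passing to a subsequence, $\kappa_j\to\kappa_\infty>0$, $q_j\to q_\infty$, so $\kappa_j^{-1}\Sigma\times\mathbb{R}\to\kappa_\infty^{-1}\Sigma\times\mathbb{R}$ and $B_{\tilde g_j}(q_j,2)\to B_{\tilde g_\infty}(q_\infty,2)$ smoothly; with the $\frac{1}{j}$-closeness this produces a smooth limit $M_\infty=B_{\tilde g_\infty}(q_\infty,2)\subset\kappa_\infty^{-1}\Sigma\times\mathbb{R}$ with $p_j\to q_\infty$ and $H(q_\infty)=1$.

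From here the argument duplicates Lemma~\ref{Vector Field closeness on Cylinder}. Writing $K^{(i,j)}_\alpha(x)=S_{(i,j)}J^{(i,j)}_\alpha S_{(i,j)}^{-1}(x-b_{(i,j)})$ with $\{J^{(i,j)}_\alpha\}$ an orthonormal basis of $so(n-1)$ and $(b_{(i,j)}-p_j)\perp\bigcap_\alpha\ker(J^{(i,j)}_\alpha S_{(i,j)}^{-1})$, the bound $\max_\alpha|K^{(i,j)}_\alpha|H\le5n$ at $p_j$ forces $|b_{(i,j)}-p_j|\le C(n)$, so after passing to a subsequence everything converges and the limit satisfies $\langle K^{(i,\infty)}_\alpha,\nu\rangle=0$ on $B_{g_\infty}(q_\infty,1)$ and $\max_\alpha|K^{(i,\infty)}_\alpha|H\le5n$ at $q_\infty$; by scale invariance of these conditions, case~(2) of Lemma~\ref{Lin Alg Cylinder A} applies to $\kappa_\infty^{-1}\Sigma\times\mathbb{R}$, and after the usual normalization ($S_{(i,\infty)}=\mathrm{Id}$, $b_{(i,\infty)}=0$, $\omega^{(1,\infty)}=\mathrm{Id}$) one gets $S_{(1,j)}^{-1}S_{(2,j)}=\exp(A_j)S_j$ with $S_j$ preserving $\mathbb{R}^{n-1}\oplus\mathbb{R}^2$, $A_j\perp so(n-1)\oplus so(2)$, $S_j\to\mathrm{Id}$, $A_j\to0$. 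Choosing $\omega^{(j)}$ with $S_j^{-1}J^{(1,j)}_\alpha S_j=\sum_\beta\omega^{(j)}_{\alpha\beta}J^{(2,j)}_\beta$ one finds $K^{(1,j)}_\alpha-\sum_\beta\omega^{(j)}_{\alpha\beta}K^{(2,j)}_\beta=P^j_\alpha(x-b_{(2,j)})+c^j_\alpha$ with $P^j_\alpha=[A_j,J^{(1,j)}_\alpha]+o(|A_j|)$, and $A_j\perp so(n-1)\oplus so(2)$ gives $|A_j|\le CQ_j$ where $Q_j$ is the normalizing supremum over $B_{2H(p_j)^{-1}}(p_j)$; rescaling by $Q_j$ and extracting a limit yields nonzero $W^\infty_\alpha=[A,J_\alpha]x+c_\alpha$ with $\langle W^\infty_\alpha,\nu\rangle=0$ on $B_{g_\infty}(q_\infty,1)$ (from $|\langle\,\cdot\,,\nu\rangle|\le2H^{-1}\epsilon_j$ against $Q_j\ge jH^{-1}\epsilon_j$), which case~(2) of Lemma~\ref{Lin Alg Cylinder B} rules out: $A=0$ and all $c_\alpha=0$, a contradiction.

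I expect the main obstacle to be the geometric reduction in the second paragraph: one must quantify, in a scale-invariant way, that a lower bound on $\frac{\lambda_1+\lambda_2}{H}$ confines the base point to a fixed compact region of Bowl$\times\mathbb{R}$, so that the rescaled models $\kappa_j^{-1}\Sigma\times\mathbb{R}$ cannot collapse to a round cylinder nor blow up or down in scale, and hence a genuine smooth limiting model exists. Once that is in hand, the linear-algebraic and convergence bookkeeping is identical to the cylindrical case, with Lemmas~\ref{Lin Alg Cylinder A} and~\ref{Lin Alg Cylinder B} supplying the required rigidity.
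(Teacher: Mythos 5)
Your proposal is correct and follows essentially the same route as the paper: a contradiction/compactness argument in which the hypothesis $\lambda_1+\lambda_2\ge\delta H$ confines the base point to a bounded (rescaled) distance from the tip axis, forcing $\kappa_j$ to stay in a compact interval $[C(\delta)^{-1},C(\delta)]$ so that a genuine Bowl$\times\mathbb{R}$ limit exists, after which case (2) of Lemmas \ref{Lin Alg Cylinder A} and \ref{Lin Alg Cylinder B} and the bookkeeping of Lemma \ref{Vector Field closeness on Cylinder} finish the argument. The paper's own proof is exactly this, merely citing the cylindrical lemma for the final linear-algebra steps that you wrote out explicitly.
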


\begin{Rem}\label{Alternative Conclusion}
  Under the assumption of Lemma \ref{Vector Field closeness on Cylinder} or Lemma \ref{VF close Bowl times R}, an
  alternative conclusion is also proved:
  if $K^{(1)}_{\alpha} = SJ_{\alpha}S^{-1}(x-b)$, then there
  exists $S', b'$ such that $|S'-S|+|b-b'|\leq C\epsilon$ and
  $\omega\in O(\frac{(n-1)(n-2)}{2})$ such that for each $\alpha$
  \begin{align}\label{V F closeness remark on Cylinder}
    K^{(2)}_{\alpha} = \sum_{\beta=1}^{\frac{(n-1)(n-2)}{2}} \omega_{\alpha\beta} S' J_{\beta}S'^{-1}(x-b')
  \end{align}
\end{Rem}

\begin{Th}\label{Cylindrical improvement}
There exists  constant  $L_0>1$ and  $0<\epsilon_0<1/10$ with the following properties: suppose that $M_t$ is  a mean curvature flow solution, if every point in the parabolic neighborhood $\hat{\mathcal{P}}(\bar{x},\bar{t},L_0,L_0^2)$ is $\epsilon$ symmetric and
$(\epsilon_0,100n^{5/2})$ cylindrical,
 where $0<\epsilon\leq \epsilon_0$, then $(\bar{x},\bar{t})$ is $\frac{\epsilon}{2}$ symmetric.
\end{Th}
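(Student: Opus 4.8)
The plan is to follow the proof of the Neck Improvement Theorem of \cite{brendle2018uniqueness} (and its adaptation in \cite{zhu2020so}), the only structural change being that the one–dimensional heat equation along the axis of a neck is replaced by a two–dimensional drift–heat equation over the $\mathbb R^2$–factor of the cylinder. After a parabolic rescaling we may assume $\bar t=0$ and $H(\bar x,0)=\sqrt{(n-2)/2}$; using the $(\epsilon_0,100n^{5/2})$–cylindrical hypothesis at \emph{every} point of $\hat{\mathcal P}(\bar x,0,L_0,L_0^2)$ (and taking $\epsilon_0$ small relative to $L_0$ to control the drift of the axis) we may fix coordinates in which $M_t$ is $C^{10}$–close, to order $C(n)\epsilon_0$, to the shrinking cylinders $S^{n-2}_{r(t)}\times\mathbb R^2$, $r(t)^2=2(n-2)(1-t)$, with standard axis $\mathbb R^{n-1}$. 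First I would \emph{globalize} the rotation vector fields: for each $p$ in the neighborhood the $\epsilon$–symmetry gives a normalized set $\mathcal K(p)$, the hypotheses of Lemma~\ref{Vector Field closeness on Cylinder} hold because $p$ is cylindrical, and comparing $\mathcal K(\bar x,0)$ with $\mathcal K(p)$ — exploiting that these fields are affine, so the comparison radius there may be taken as large as $L_0$ — produces a single normalized set $\mathcal K^{*}=\{K^{*}_\alpha\}$ with $\max_\alpha|\langle K^{*}_\alpha,\nu\rangle|H\le C(n)L_0\epsilon$ and $\max_\alpha|K^{*}_\alpha|H\le C(n)L_0$ on $\hat{\mathcal P}(\bar x,0,\tfrac12 L_0,\tfrac14 L_0^2)$, and with $\max_\alpha|\langle K^{*}_\alpha,\nu\rangle|H\le\epsilon$, $\max_\alpha|K^{*}_\alpha|H\le 6n$ on $\hat P(\bar x,0,100n^{5/2},100^2n^5)$.

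Each $K^{*}_\alpha$ is a Killing field of $\mathbb R^{n+1}$, so $f_\alpha:=\langle K^{*}_\alpha,\nu\rangle$ is a Jacobi field, $(\partial_t-\Delta_{g(t)})f_\alpha=|A|^2 f_\alpha$. Expanding $f_\alpha=\sum_k f_{\alpha,k}(z,t)\,Y_k(\omega)$ in spherical harmonics $Y_k$ on $S^{n-2}$, the component $f_{\alpha,k}$ satisfies, up to an $O(\epsilon_0)$ perturbation, the two–dimensional equation $\partial_t f_{\alpha,k}=\Delta_{\mathbb R^2}f_{\alpha,k}+\big((n-2)-k(k+n-3)\big)r(t)^{-2}f_{\alpha,k}$; on the static model $S^{n-2}_{\sqrt{2(n-2)}}\times\mathbb R^2$ the linearized operator has eigenvalue $1-\tfrac{k(k+n-3)}{2(n-2)}-\tfrac m2$ on the degree–$k$ spherical, degree–$m$ Hermite (in $z$) mode. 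I would then run a Merle–Zaag–type estimate for the mode norms. For $k\ge2$ the potential is strictly negative and $\int r(t)^{-2}\,dt$ over the neighborhood has size $\asymp(n-2)^{-1}\log L_0$, so these modes decay across the neighborhood by a factor $L_0^{-(k(k+n-3)-(n-2))/(n-2)}\le L_0^{-n/(n-2)}$, which more than compensates the factor $L_0$ lost in the globalization. For $k=1$ the potential vanishes, so $f_{\alpha,1}$ solves the heat equation on $\mathbb R^2$ and, by interior estimates, is within $C\epsilon/L_0$ near the center of its affine part $(a_\alpha+\langle b_\alpha,z\rangle)Y_1(\omega)=\langle L_\alpha,\nu\rangle|_{\mathrm{cyl}}$ for an ambient Killing field $L_\alpha$ which is a translation plus an axis–tilting rotation; since these are exactly the moduli of a normalized set of rotation vector fields, Lemmas~\ref{Lin Alg Cylinder A}, \ref{Lin Alg Cylinder B} and Remark~\ref{Alternative Conclusion} let me absorb the $k=1$ part by passing to a nearby normalized set $\hat{\mathcal K}$ with $|\hat K_\alpha-\sum_\beta\omega_{\alpha\beta}K^{*}_\beta|H\le C(n)L_0\epsilon$. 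The $k=0$ component $f_{\alpha,0}=\fint_{S^{n-2}}f_\alpha\,d\omega$ vanishes for any rotation vector field restricted to an exact cylinder (the flux of a divergence–free field through a closed cross–section is zero), so on $M$ it is controlled by the cylindrical defect, and the residual low Hermite content in the two–dimensional variable $z$ is pinned near the center by combining this with the a priori bound $|f_\alpha|H\le\epsilon$ on the parabolic boundary, exactly as in \cite{brendle2018uniqueness}. Choosing $L_0$ large (in terms of $n$) and then $\epsilon_0$ small (in terms of $n$ and $L_0$) yields $\max_\alpha|\langle\hat K_\alpha,\nu\rangle|H\le\epsilon/2$ and $\max_\alpha|\hat K_\alpha|H\le 5n$ on $\hat P(\bar x,0,100n^{5/2},100^2n^5)$, i.e.\ $(\bar x,0)$ is $\epsilon/2$–symmetric.

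The main obstacle is the two–dimensional spectral analysis: one must classify the non–negative eigenmodes of the drift–heat operator on $S^{n-2}\times\mathbb R^2$, recognize which of them (axis tilts, $\mathbb R^2$–translations, the singular–time shift, and the unstable cross–sectional modes) can be absorbed into a redefinition of the rotation vector fields or of the approximating cylinder and its parametrization, establish the Liouville/decay estimates for the remaining strictly stable modes of a genuinely two–dimensional equation, and carry out the Merle–Zaag bookkeeping so that the decay gained over the parabolic neighborhood strictly dominates the factor $L_0$ lost in the globalization step — all of which reduce to their one–dimensional counterparts in \cite{brendle2018uniqueness} only after genuine work.
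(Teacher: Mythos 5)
Your proposal follows essentially the same route as the paper's proof: globalize the rotation vector fields via Lemma \ref{Vector Field closeness on Cylinder}, decompose the Jacobi field $\langle K_\alpha,\nu\rangle$ into spherical harmonics on $S^{n-2}$, gain a negative power of $L_0$ on the strictly stable modes, show the $\lambda_m=n-2$ modes are affine in $(z_1,z_2)$ up to errors and absorb them into a corrected normalized set $\exp(P)J_\alpha\exp(-P)(x-b)$, and kill the $m=0$ mode by the divergence-free structure of $J_\alpha\Theta$. The only cosmetic difference is that where you invoke a Merle--Zaag/Hermite-mode analysis for the two-dimensional equation, the paper estimates the modes directly with an explicit Dirichlet heat kernel on a square $\Omega_{L_0/4}\subset\mathbb{R}^2$ — a concrete implementation of exactly the decay bookkeeping you identify as the remaining work.
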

\begin{proof}
   We will abbreviate some details that are similar to those in \cite{brendle2018uniqueness}, \cite{brendle2019uniqueness} or \cite{zhu2020so}.

   We assume that $L_0$ is large and $\epsilon_0$ is small depending on $L_0$. The constant $C$ is always assumed to depend on $n$.

  For any space-time point $(y,s)\in \hat{\mathcal{P}}(\bar{x},\bar{t},L_0,L_0^2)$, by assumption there is a normalized set of rotation field $\mathcal{K}^{(y,s)} = \{K_{\alpha}^{(y,s)}, 1\leq\alpha \leq \frac{(n-1)(n-2)}{2}\}$ such that
   $\max_{\alpha}|\left<K_{\alpha}^{(y,s)},\nu\right>|H\leq \epsilon$ and
   $\max_{\alpha}|K_{\alpha}^{(y,s)}|H\leq 5n$ in a parabolic neighbourhood $\hat{\mathcal{P}}(y,s,100n^{5/2},100^2n^5)$.

   Without loss of generality we may assume $\bar{t}=-1$, $H(\bar{x})=\sqrt{\frac{n-2}{2}}$
   and $|\bar{x}-(\sqrt{2(n-2)},0,0,0)|\leq \epsilon_0$.

   Let's set up a reference normalized set of rotation vector fields
   $\mathcal{K}^0 =\{K^0_{\alpha}(x) = J_{\alpha}x, 1\leq\alpha\leq\frac{(n-1)(n-2)}{2}\}$  and let
   $\bar{\mathcal{K}}=\mathcal{K}^{(\bar{x},-1)}$,
   where $\{J_{\alpha}, 1\leq\alpha\leq\frac{(n-1)(n-2)}{2}\}$
   is the orthonormal basis of $so(n-1)$.

   By the cylindrical assumption,
    for each $(y,s)\in\hat{\mathcal{P}}(\bar{x},\bar{t},L_0,L_0^2)$,
     the parabolic neighborhood $\hat{\mathcal{P}}(y,s,100^2n^5,100^2n^5)$ is $C(L_0)\epsilon_0$ close to the shrinking cylinder
   $S^{n-2}_{\sqrt{-2(n-2)t}}\times \mathbb{R}^2$ in $C^{10}$ norm.

    We  use the spherical coordinate $(r\Theta,z_1,z_2)$ on $M_t$, under which  $M_t$ is expressed as a radial graph.
    More precisely,
    the set $$\left\{(r\Theta,z_1,z_2)\big|r=r(\Theta,z_1,z_2), z_1^2+z_2^2\leq \frac{L_0^2}{2},\Theta\in S^{n-2}\right\}$$ is contained in $M_t$.

    Let $0=\lambda_0<\lambda_1\leq\lambda_2\leq ...$ be all eigenvalues
       of the Laplacian on the unit sphere $S^{n-2}$ and let $Y_m$ be the
       eigenfunction corresponding to $\lambda_m$ such that $\{Y_m\}$ forms an
       orthonormal basis with respect to $L^2$ inner product on unit sphere.
       In particular, $\lambda_1=...=\lambda_{n-1}=n-2$, and
       for each $k\leq n-1$ we can choose $Y_k$ to be proportional to $\Theta_k$,  the
       $k$-th coordinate
       function of $\Theta$ (if we consider $\Theta$ as unit
       vector in $\mathbb{R}^{n-1}$).    \\

    \noindent\textbf{Step 1:}
   Using Lemma \ref{Vector Field closeness on Cylinder}
   repeatedly we can conclude that, for any $R>1$ and
   $(x_0, t_0)\in \hat{\mathcal{P}}(\bar{x},\bar{t},L_0,L_0^2)$
   \begin{align*}
      \inf\limits_{\omega\in O(\frac{(n-1)(n-2)}{2})}\sup\limits_{B_{RH(\bar{x})^{-1}}(\bar{x})}\max_{\alpha}
    |\bar{K_{\alpha}}-\sum_{\beta = 1}^{\frac{(n-1)(n-2)}{2}}\omega_{\alpha\beta}K^{(x_0, t_0)}_{\beta}|H(\bar{x})
    \leq C(L_0)R\epsilon
   \end{align*}
   Therefore we can replace $K^{(x_0, t_0)}_{\alpha}$ by
   $\sum_{\beta}\omega_{\alpha\beta}K_{\beta}^{(x_0, t_0)}$ for some
   $\omega\in O(\frac{(n-1)(n-2)}{2})$ depending on $(y,s)$ such that
   \begin{align}\label{Single Vector}
      \sup\limits_{B_{RH(\bar{x})^{-1}}(\bar{x})}\max_{\alpha}
    |\bar{K_{\alpha}}- K^{(x_0, t_0)}_{\alpha}|H(\bar{x})
    \leq C(L_0)R\epsilon
   \end{align}

   Further applying Lemma \ref{Vector Field closeness on Cylinder} for
   $\bar{\mathcal{K}}$ and $\mathcal{K}^0$ with $\epsilon_0$, we can find $ \omega\in O(\frac{(n-1)(n-2)}{2})$ such that:
   \begin{align}
      \sup\limits_{B_{RH(\bar{x})^{-1}}(\bar{x})}\max_{\alpha}
    |\bar{K}_{\alpha} - \sum_{\beta = 1}^{\frac{(n-1)(n-2)}{2}}\omega_{\alpha\beta}K^{0}_{\beta}|H(\bar{x})
    \leq CR\epsilon_0
   \end{align}

   By Remark \ref{Alternative Conclusion}, the axis of $\bar{K}_{\alpha}$ and the axis of the approximating cylinder differs by at most $CR\epsilon_0$, therefore by rotating the cylinder we
   may assume that $\bar{\mathcal{K}}=\mathcal{K}^0$.

   Let's remark that since $|\sum_{\beta = 1}^{\frac{(n-1)(n-2)}{2}}\omega_{\alpha\beta}K^{0}_{\beta}|H \equiv n-2$ on $S^{n-2}\times \mathbb{R}^2$, by approximation the condition $|K_{\alpha}|H\leq 5n$ is always satisfied throughout the proof. \\

   \noindent\textbf{Step 2:} Let's fix an $\alpha$. Given any space time point
   $(x_0, t_0)\in \hat{\mathcal{P}}(\bar{x},\bar{t},L_0,L_0^2)$, there exists
   constants $a_i, b_i, c_i$ (depending on the choice of $(x_0, t_0)$) such that
   \begin{align*}
     |a_1| + ...+|a_{n-1}|\leq &C(L_0)\epsilon \\
     |b_1| + ...+|b_{n-1}|\leq &C(L_0)\epsilon \\
     |c_1| + ...+|c_{n-1}| \leq &C(L_0)\epsilon
   \end{align*}
   and
   \begin{align*}
     |\left<\bar{K}_{\alpha}-K_{\alpha}^{(x_0, t_0)},\nu \right>&-(a_1Y_1+...+a_{n-1}Y_{n-1})\\
   &-(b_1Y_1+...+b_{n-1}Y_{n-1})z_1 \\
   &-(c_1Y_1+...+c_{n-1}Y_{n-1})z_2|\leq C(L_0)\epsilon_0\epsilon
   \end{align*}
   holds in $\hat{\mathcal{P}}(x_0, t_0, 10,100)$.

   Consequently,  function $u=\left<\bar{K_{\alpha}},\nu\right>$ satisfies
   \begin{align}\label{u Bound}
     |u&-(a_1Y_1+...+a_{n-1}Y_{n-1})\notag\\
   &-(b_1Y_1+...+b_{n-1}Y_{n-1})z_1 \notag\\
   &-(c_1Y_1+...+c_{n-1}Y_{n-1})z_2|\leq C(L_0)\epsilon_0\epsilon + C(-t_0)^{1/2}\epsilon
   \end{align}
   in $\hat{\mathcal{P}}(x_0, t_0, 10,100)$.

   Function $u$ satisfies the Jacobi equation:
   \begin{align}\label{Jacobi Equation}
     \frac{\partial u}{\partial t} = \Delta u + |A|^2u
   \end{align}
   using parabolic interior estimate we obtain $|\nabla u| + |\nabla^2 u|\leq C(L_0)\epsilon$ in $\hat{\mathcal{P}}(\bar{x},-1, \frac{L_0}{\sqrt{2}}, \frac{L_0^2}{2})$.

   By approximation,
    \begin{align}\label{Nonhom Jacobi}
       |\frac{\partial u}{\partial t}-(\frac{\partial^2 u}{\partial z_1^2}+\frac{\partial^2 u}{\partial z_2^2}+\frac{\Delta_{S^{n-2}}u}{-2(n-2)t}+\frac{1}{-2t}u)|\leq C(L_0)\epsilon\epsilon_0
     \end{align}
     holds for $z_1^2+z_2^2 \leq \frac{L_0^2}{4}$ and $-\frac{L_0^2}{4}\leq t\leq -1$.

      Define

      $\Omega_l(\bar{z}_1,\bar{z}_2) =\{(z_1,z_2)\in\mathbb{R}^2| \ |z_i-\bar{z}_i|\leq l\}$

      $\Omega_l=\Omega_l(0, 0)$

       $\Gamma_l=\{(z_1,z_2)\in \Omega_l,\theta\in [0,2\pi],t\in [-l^2,-1] \big| t=-l^2 \text{ or } |z_1|=l \text{ or } |z_2|=l\}$

      Let $\tilde{u}$ solves the homogenous equation
      \begin{align*}
       \frac{\partial u}{\partial t}=\frac{\partial^2 u}{\partial z_1^2}+\frac{\partial^2 u}{\partial z_2^2}+\frac{\Delta_{S^{n-2}}u}{-2(n-2)t}+\frac{1}{-2t}u
      \end{align*}
       in $\Omega_{L_0/4}\times [0,2\pi]\times [-L_0^2/16,-1]$ and satisfies the boundary condition $\tilde{u}=u$ on $\Gamma_{L_0/4}$.
       By (\ref{Nonhom Jacobi}) and maximal principle
       \begin{align}\label{diff tildeu and u}
         |u-\tilde{u}|\leq C(L_0)\epsilon\epsilon_0
       \end{align}

       Next, we compute Fourier coefficients of $\tilde{u}$  and analysis each of them.
       \begin{align*}
         v_m=\int_{S^{n-2}}\tilde{u}(z,t,\Theta)Y_m(\Theta) d\Theta
       \end{align*}
        where $z=(z_1,z_2)$. $v_m$ satisfies the following equation:
        \begin{align*}
       \frac{\partial v_m}{\partial t}=\frac{\partial^2 v_m}{\partial z_1^2}+\frac{\partial^2 v_m}{\partial z_2^2}+\frac{n-2-\lambda_m}{-2(n-2)t}v_m
      \end{align*}
      Therefore

       Let $\hat{v}_m=v_m(-t)^{\frac{n-2-\lambda_m}{2(n-2)}}$.
       Then $\hat{v}_m$ satisfies the linear heat equation
       \begin{align*}
         \frac{\partial \hat{v}_m}{\partial t}=\frac{\partial^2 \hat{v}_m}{\partial z_1^2}+\frac{\partial^2 \hat{v}_m}{\partial z_2^2}
       \end{align*}

       \noindent\textit{Case 1: } $m\geq n$

       In this case $\lambda_m\geq 2(n-1)$. Taking the $L^2$ inner product with
       $Y_m$ on both sides of (\ref{u Bound}) and multiplying by
       $(-t)^{\frac{n-2-\lambda_m}{2(n-2)}}$   we obtain that:
       \begin{align*}
         |\hat{v}_m|\leq (C(L_0)\epsilon\epsilon_0+C\epsilon)(-t)^{1-\frac{\lambda_m}{2(n-2)}}
       \end{align*}
        in $\Omega_{L_0/4}\times [-L_0^2/16,-1]$

       The heat kernel with Dirichlet Boundary for $\Omega_{L_0/4}$ is
       \begin{align*}
          K_t&(x,y)=-\frac{1}{4\pi t}\sum_{\delta_i\in \{\pm 1\}, k_i\in \mathbb{Z}} (-1)^{-(\delta_1+\delta_2)/2}\cdot\\ &\exp\left({-\frac{\left|(x_1,x_2)-(\delta_1 y_1, \delta_2 y_2)-(1-\delta_1,1-\delta_2)\frac{L_0}{4}+(4k_1,4k_2)\frac{L_0}{4}\right|^2}{4t}}\right)
       \end{align*}
       where $x=(x_1,x_2), y=(y_1, y_2)$ are in $\Omega_{L_0/4}$.

       $K_t$ satisfies
       \begin{enumerate}
         \item $\partial_tK_t=\Delta_x K_t$ in  $\Omega_{L_0/4}\times (0,\infty) $
         \item $K_t$ is symmetric in $x$ and $y$
         \item $\lim_{t\rightarrow 0}K_t(x,y)=\delta(x-y)$ in the distribution sense.
         \item $K_t(x,y)=0$ \ for $y\in\partial\Omega_{L_0/4}$ and $t>0$
       \end{enumerate}
       The solution formula is
       \begin{align*}
         \hat{v}_m(x,t)= &\int_{\Omega_{L_0/4}}K_{t+L_0^2/16}(x,y)\hat{v}_m(y,-\frac{L_0^2}{16})dy \\ -&\int_{-L_0^2/16}^{t}\int_{\partial\Omega_{L_0/4}}\partial_{\nu_y}K_{t-\tau}(x,y)\hat{v}_m(y,\tau)dy \ d\tau
       \end{align*}

       Now we let $(x,t)\in \Omega_{L_0/100}\times [-L_0^2/100^2,-1]$ ($L_0$ is large enough) and $-L_0^2/16\leq\tau<t$.
       For all such $(x,t)$ and $\tau$ we have the following heat kernel estimate (see Appendix \ref{Appendix HKEST} for details): 

         \begin{align}\label{hkest1}
           \int_{\Omega_{L_0/4}}|K_{t+L_0^2/16}(x,y)|dy\leq C
         \end{align}

        \begin{align}\label{hkest2}
          \int_{\partial{\Omega_{L_0/4}}}|\partial_{\nu_y}K_{t-\tau}(x,y)|dy\leq \frac{CL_0^2}{(t-\tau)^2}e^{-\frac{L_0^2}{1000(t-\tau)}}
        \end{align}

       Now we can estimate $\hat{v}_m(\bar{x},\bar{t})$ for $(\bar{x},\bar{t})\in \Omega_{L_0/100}\times [-\frac{L_0^2}{100^2},-1]$.
       \begin{align*}
         |\hat{v}_m(x,t)|&\leq (C(L_0)\epsilon_0\epsilon+C\epsilon)\Big(\frac{L_0^2}{16}\Big)^{1-\frac{\lambda_m}{2(n-2)}} \\
         &+(C(L_0)\epsilon_0\epsilon+C\epsilon)\int_{-L_0^2/16}^{t}\frac{CL_0^2}{(t-\tau)^2}e^{-\frac{L_0^2}{1000(t-\tau)}}(-\tau)^{1-\frac{\lambda_m}{2(n-2)}}d\tau\\
       \end{align*}

       For $L_0$ large and $-200^2n^5\leq t\leq -1$
       \begin{align*}
         \frac{CL_0^2}{(t-\tau)^2}e^{-\frac{L_0^2}{2000(t-\tau)}}(-\tau)^{1-\frac{1}{2(n-2)}}\leq CL_0^{-\frac{1}{n-2}}
       \end{align*}
       whenever $\tau<t$.

       Therefore
       \begin{align}\label{estimate large m}
         |\hat{v}_m(x,t)|\leq (C(L_0)\epsilon_0\epsilon+C\epsilon)&\Big(\frac{L_0^2}{16}\Big)^{1-\frac{\lambda_m}{2(n-2)}} \notag \\ \notag
         +(C(L_0)\epsilon_0\epsilon+C\epsilon)&\int_{-\frac{L_0^2}{16}}^{t}CL_0^{-\frac{1}{n-2}} e^{-\frac{L_0^2}{2000(t-\tau)}}(-\tau)^{\frac{1-\lambda_m}{2(n-2)}}d\tau\\\notag
         \leq(C(L_0)\epsilon_0\epsilon+C\epsilon)&\Big[\Big(\frac{L_0}{4}\Big)^{2-\frac{\lambda_m}{n-2}}+\int_{(1+\frac{1}{\sqrt{\lambda_m}})t}^{t}C e^{-\frac{L_0^2}{2000(t-\tau)}}d\tau\\  \notag &+\int_{-\infty}^{(1+\frac{1}{\sqrt{\lambda_m}})t}CL_0^{-\frac{1}{n-2}}(-\tau)^{\frac{1-\lambda_m}{2(n-2)}}d\tau \Big] \\ \notag
         \leq(C(L_0)\epsilon_0\epsilon+C\epsilon)&\Big[\Big(\frac{L_0}{4}\Big)^{2-\frac{\lambda_m}{n-2}}+\frac{C}{\sqrt{\lambda_m}}e^{-\frac{\sqrt{\lambda_m}L_0^2}{2000}}\\ &+CL_0^{-\frac{1}{n-2}}\left(1+\frac{1}{\sqrt{\lambda_m}}\right)^{\frac{2n-3-\lambda_m}{2(n-2)}} \Big]
       \end{align}

       Note that $2-\frac{\lambda_m}{n-2}\leq \frac{-2}{n-2}$ and $\frac{2n-3-\lambda_m}{2(n-2)}\leq -\frac{1}{2(n-2)}$. Morevoer,
       The eigenvalues of Laplacian on $S^{n-2}$ are $l(l+n-3)$ for integer
       $l\geq 1$ with multiplicity $N_l={n+l-2 \choose n-2}-{n+l-4 \choose n-2}$.
       For large $l$ we have $l(l+n-3)\leq Cl^2$ and $N_l\leq Cl^{n-2}$.
       Putting these facts into (\ref{estimate large m}) and summing over
       $m\geq n$ we obtain:
       \begin{align*}
         \sum_{m=n}^{\infty} |\tilde{v}_m|\leq  C(L_0)\epsilon_0\epsilon+CL_0^{-\frac{1}{n-2}}\epsilon
       \end{align*}
       for $(x,t)\in\Omega_{200n^{5/2}}\times[-200^2n^5,-1]$ \\

       \noindent\textit{Case 2: } $1\leq m \leq n-1$

       In this case $v_m = \hat{v}_m$. Recall that they satisfies the heat equation:
       \begin{align*}
         \frac{\partial v_m}{\partial t}=\frac{\partial^2 v_m}{\partial z_1^2}+\frac{\partial^2 v_m}{\partial z_2^2}  \ \  \text{  in } \Omega_{L_0/4}\times [-L_0^2/16,-1]
       \end{align*}
       For each $(X_0,t_0)=(x_1,x_2,t_0)\in\Omega_{L_0/4} \times [-L_0^2/16,-1]$,  taking the Fourier coefficient of $Y_m$ of (\ref{u Bound}), we can find $a_m, b_m, c_m $ satisfying $|a_m|+|b_m|+|c_m|\leq C(L_0)\epsilon$ and
        $$|v_m - (a_m + b_m z_1 + c_m z_2)|\leq (C(L_0)\epsilon\epsilon_0+C\epsilon)(-t)^{1/2}$$
       in $\Omega_{(-t_0)^{\frac{1}{2}}}(x_1,x_2)\times[0,2\pi]\times[2t_0,t_0]$.

       The linear term satisfies the heat equation trivially, so the parabolic interior gradient estimate implies that $|\frac{\partial^2 v_m}{\partial z_i\partial z_j}|\leq(C(L_0)\epsilon\epsilon_0+C\epsilon)(-t)^{-1/2}$ for each pair of $1\leq i,j\leq2$ in  $\Omega_{L_0/8}\times[-L_0^2/64,-1]$.

       Note that $\frac{\partial^2 v_m}{\partial z_i\partial z_j}$  also satisfies the heat equation in $\mathbb{R}^2$,
       we can apply the same argument as in $m\geq n$ case  to $\frac{\partial^2 v_m}{\partial z_i\partial z_j}$  to obtain that:
       \begin{align*}
         |\frac{\partial^2 v_m}{\partial z_i\partial z_j}|&\leq (C(L_0)\epsilon_0\epsilon+C\epsilon)\Big(\frac{L_0^2}{64}\Big)^{-\frac{1}{2}} \\
         &+(C(L_0)\epsilon_0\epsilon+C\epsilon)\int_{-L_0^2/64}^{t}\frac{CL_0^2}{(t-\tau)^2}e^{-\frac{L_0^2}{1000(t-\tau)}}(-\tau)^{-\frac{1}{2}}d\tau\\
       \end{align*}
       for each $1\leq i, j\leq 2$.

       For $L_0$ large, $-200^2n^5\leq t\leq -1$.
       \begin{align*}
         \frac{CL_0^2}{(t-\tau)^2}e^{-\frac{L_0^2}{1000(t-\tau)}}\leq CL_0^{-2}
       \end{align*}
       whenever $\tau<t$.

       Then we have:

       \begin{align}
         \sum_{i,j=1,2}|\frac{\partial^2 v_m}{\partial z_i\partial z_j}|\leq C(L_0)\epsilon_0\epsilon+CL_0^{-1}\epsilon
       \end{align}
      for $(x,t)\in\Omega_{200n^{5/2}}\times[-200^2n^5,-1]$ and any $1\leq m \leq n-1$.
      This means that we can find real numbers $A_m, B_m, C_m$ such that
      \begin{align}
        |v_m-(A_m+B_m z_1+C_m z_2)|\leq C(L_0)\epsilon_0\epsilon+CL_0^{-1}\epsilon
      \end{align}
      and $|A_m|+|B_m|+|C_m|\leq C(L_0)\epsilon$. \\

      \noindent\textit{Case 3: } $m=0$

      Under the polar coordinate on $M_t$, the normal vector $\nu$ satisfies:
      \begin{align*}
        \nu\sqrt{1+\frac{|\nabla^{S^{n-2}} r|}{r}^2+(\frac{\partial u}{\partial z_1})^2+(\frac{\partial u}{\partial z_2})^2}=(\Theta,-\frac{\partial r}{\partial z_1},-\frac{\partial r}{\partial z_2})+(-\frac{\nabla^{S^{n-2}} r}{r}, 0, 0)
      \end{align*}
      Since $J_{\alpha}\Theta$ is divergence free on $S^{n-2}$, we have
      \begin{align*}
        &\int_{S^{n-2}}u\sqrt{1+\frac{|\nabla^{S^{n-2}} r|}{r}^2+(\frac{\partial u}{\partial z_1})^2+(\frac{\partial u}{\partial z_2})^2}d\Theta \\
        =&\int_{S^{n-2}}\left<J_{\alpha}\Theta, -\nabla^{S^{n-2}}r \right>_{S^{n-2}}d\Theta \\
        =&\int_{S^{n-2}} \text{div}_{S^{n-2}}(rJ_{\alpha}\Theta)d\Theta = 0
      \end{align*}
      Since $|u|\leq C(L_0)\epsilon$ and $|\frac{\nabla^{S^{n-2}} r}{r}|+|\frac{\partial u}{\partial z_1}|
      +|\frac{\partial u}{\partial z_2}|\leq C\epsilon_0$,   we  obtain that
      \begin{align}\label{mode0estimate}
        \left|\int_{0}^{2\pi}u(\Theta,z_1,z_2)d\Theta\right|\leq C(L_0)\epsilon_0\epsilon
      \end{align}
      in $\Omega_{200n^{5/2}}\times[-200^2n^5,-1]$.

      We then conclude that $|\hat{v}_0|\leq C(L_0)\epsilon_0\epsilon$ in $\Omega_{200n^{5/2}}\times[-200^2n^5,-1]$ \\

      \noindent\textbf{Step 3: }Combing the analysis for all $m\geq 0$ and the fact that $|u-\tilde{u}|\leq C(L_0)\epsilon_0\epsilon$,
       we conclude that
       there exists  constants $A_{\alpha,i}, B_{\alpha,i}, C_{\alpha,i}$ for
       $1\leq \alpha \leq \frac{(n-1)(n-2)}{2}$ and  $1\leq i\leq n-1$ such that
       \begin{align*}
         |A_{\alpha,1}| + ...+|A_{\alpha,n-1}|\leq &C(L_0)\epsilon \\
         |B_{\alpha,1}| + ...+|B_{\alpha,n-1}|\leq &C(L_0)\epsilon \\
         |C_{\alpha,1}| + ...+|C_{\alpha,n-1}| \leq &C(L_0)\epsilon
       \end{align*}
      and
      \begin{align}\label{untuned u bound}
      |\left<\bar{K}_{\alpha},\nu\right>&-(A_{\alpha,1}Y_1+...+A_{\alpha,n-1}Y_{n-1})\notag\\
       &-(B_{\alpha,1}Y_1+...+B_{\alpha,n-1}Y_{n-1})z_1 \notag\\
       &-(C_{\alpha,1}Y_1+...+C_{\alpha,n-1}Y_{n-1})z_2|\leq C(L_0)\epsilon_0\epsilon+CL_0^{-\frac{1}{n-2}}\epsilon
       \end{align}
      in $S^{n-2}\times\Omega_{200n^{5/2}}\times[-200^2n^5,-1]$.

      For each $i\in\{1,2,...,n-1\}$, define
      \begin{align*}
        F_i(z_1,z_2) = \int_{S^{n-2}}r(\Theta,z_1,z_2)Y_i(\Theta)d\Theta
      \end{align*}
      (recall that $r$ is the radius function under the polar coordinate, which is approximately a constant)
      We compute
      \begin{align}\label{L2 inner product 1}
        \int_{S^{n-2}}\left<\bar{K}_{\alpha},\nu\right>Y_id\Theta= &\int_{S^{n-2}}-\text{div}_{S^{n-2}}(rJ_{\alpha}\Theta)Y_i \notag\\
        =&\int_{S^{n-2}} -\text{div}_{S^{n-2}}(rJ_{\alpha}\Theta Y_i)+\left<\nabla^{S^{n-2}}Y_i,rJ_{\alpha}\Theta\right> \notag\\
        =& \sum_{j=1}^{n-1} \int_{S^{n-2}} J_{\alpha,ij}Y_j rd\Theta \notag\\
        =& \sum_{j=1}^{n-1}  J_{\alpha,ij}F_j(z_1,z_2)
      \end{align}

      On the other hand, if we take $L^2$ inner product with $Y_i$ on both sides
      of (\ref{untuned u bound}) we have
      \begin{align}\label{L2 prod 2}
        \left|\int_{S^{n-2}}\left<\bar{K}_{\alpha},\nu\right>Y_id\Theta - (A_{\alpha, i} + B_{\alpha,i}z_1 + C_{\alpha,i}z_2 )\right|\leq C(L_0)\epsilon_0\epsilon+CL_0^{-\frac{1}{n-2}}\epsilon
      \end{align}

      Let $b\in \mathbb{R}^{n+1}$ and $P\in so(n-1)^{\perp}\subset so(n+1)$ satisfy
      \begin{align}\label{define b p}
        b_i =& F_i(0,0)\notag\\
        P_{n,i}= &-\frac{1}{2}(F_i(1,0)-F_i(-1,0))\\
        P_{n+1,i}=&-\frac{1}{2}(F_i(0,1)-F_i(0,-1))\notag
      \end{align}
      for each $i\in\{1,2,...,n-1\}$. Hence
       \begin{align}\label{Lie Bracket P J}
        [P,J_{\alpha}]_{n,i} =& -\sum_{j=1}^{n-1} \frac{1}{2}(F_j(1,0)-F_j(-1,0))J_{\alpha,ji} \notag\\
        [P,J_{\alpha}]_{n+1,i} =& -\sum_{j=1}^{n-1} \frac{1}{2}(F_j(0,1)-F_j(0,-1))J_{\alpha,ji}\\
        |P|+|b|\leq &C(L_0)\epsilon \notag
      \end{align}
      for each $i$.
      Now we combine the computations   (\ref{L2 inner product 1}) (\ref{L2 prod 2}) and ($\ref{Lie Bracket P J}$)
     to conclude that, for each $1\leq \alpha\leq \frac{(n-1)(n-2)}{2}$ and $1\leq i\leq n-1$:
     \begin{align*}
       |A_{\alpha,i} - (J_{\alpha}b)_i| \leq C(L_0)\epsilon_0\epsilon+CL_0^{-\frac{1}{n-2}}\epsilon\\
       |B_{\alpha,i} -[P,J_{\alpha}]_{n,i}|\leq C(L_0)\epsilon_0\epsilon+CL_0^{-\frac{1}{n-2}}\epsilon \\
       |C_{\alpha,i} -[P,J_{\alpha}]_{n+1,i}|\leq C(L_0)\epsilon_0\epsilon+CL_0^{-\frac{1}{n-2}}\epsilon
     \end{align*}

      Now we let $S=\exp(P)$ and let
      \begin{align*}
        \tilde{K}_{\alpha} = SJ_{\alpha}S^{-1}(x-b)
      \end{align*}

      Then it's easy to compute that on the cylinder $S^{n-2}\times\mathbb{R}^2$
      (of any radius) we have
      \begin{align}\label{diff tuning}
        |\left<\bar{K}_{\alpha}-\tilde{K}_{\alpha},\nu\right> - &\left<[P,J_{\alpha}]x - J_{\alpha}b,\nu\right> | \leq  C|P|^2+C|P||b|+C(L_0)\epsilon_0\epsilon+CL_0^{-\frac{1}{n-2}}\epsilon \notag\\
        \Rightarrow|\left<\bar{K}_{\alpha}-\tilde{K}_{\alpha},\nu\right> - &(A_{\alpha,1}Y_1+...+A_{\alpha,n-1}Y_{n-1})\\
       -&(B_{\alpha,1}Y_1+...+B_{\alpha,n-1}Y_{n-1})z_1 \notag\\
       -&(C_{\alpha,1}Y_1+...+C_{\alpha,n-1}Y_{n-1})z_2| \leq C(L_0)\epsilon_0\epsilon+CL_0^{-\frac{1}{n-2}}\epsilon\notag\\ \notag
      \end{align}
      for each $1\leq\alpha \leq \frac{(n-1)(n-2)}{2}$.

      By approximation and (\ref{diff tuning}), (\ref{untuned u bound}) we conclude that,
      \begin{align*}
        |\left<\tilde{K}_{\alpha},\nu\right>|H \leq C(L_0)\epsilon_0\epsilon + CL_0^{-\frac{1}{n-2}}\epsilon
      \end{align*}
      in $\hat{\mathcal{P}}(\bar{x},\bar{t},100n^{5/2},100^2n^5)$,
      for each $1\leq\alpha \leq \frac{(n-1)(n-2)}{2}$.

      We choose $L_0$ large enough and then choose $\epsilon_0$ small enough depending on $L_0$, then $(\bar{x},\bar{t})$ is $\frac{\epsilon}{2}$
      symmetric.
\end{proof}

\begin{Th}\label{bowl x R improvement}
There exists a constant  $L_1\gg L_0$ and  $0<\epsilon_1\ll \epsilon_0$ such that: for a mean curvature flow solution $M_t$ and a space-time point $(\bar{x},\bar{t})$, if $\hat{\mathcal{P}}(\bar{x},\bar{t},L_1,L_1^2)$ is $\epsilon_1$ close to a piece of $\text{Bowl}^{n-1} \times \mathbb{R}$ in $C^{10}$ norm after the parabolic rescaling (which makes $H(\bar{x},\bar{t})=1$)
and every points in $\hat{\mathcal{P}}(\bar{x},\bar{t},L_1,L_1^2)$ are $\epsilon$ symmetric with $0<\epsilon\leq \epsilon_1$, then $(\bar{x},\bar{t})$ is $\frac{\epsilon}{2}$ symmetric.
\end{Th}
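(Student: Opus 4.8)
The plan is to follow the scheme of \cite{zhu2020so}, \cite{brendle2018uniqueness} and \cite{angenent2020uniqueness}: iterate the Cylindrical Improvement Theorem \ref{Cylindrical improvement} on the part of the parabolic neighborhood that is genuinely cylindrical, and use a carefully chosen barrier to carry the resulting improved symmetry through the tip region, where Theorem \ref{Cylindrical improvement} gives nothing. First I would rescale so that $\bar{t}=-1$, $H(\bar{x},\bar{t})=1$, and place the approximating $\text{Bowl}^{n-1}\times\mathbb{R}$ in standard position, with $SO(n-1)$ acting on the first $n-1$ coordinates, the Bowl factor translating in $e_n$, and $e_{n+1}$ the splitting direction; I may also assume $\bar{x}$ has bounded rescaled distance to the tip, since otherwise $(\bar{x},\bar{t})$ is $(\epsilon_0,100n^{5/2})$ cylindrical and Theorem \ref{Cylindrical improvement} already applies. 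Set $\mathcal{K}^0=\{J_\alpha x\}$ for an orthonormal basis $\{J_\alpha\}$ of $so(n-1)$ and $\bar{\mathcal{K}}=\mathcal{K}^{(\bar{x},\bar{t})}$. Using Lemma \ref{Vector Field closeness on Cylinder} at cylindrical points and Lemma \ref{VF close Bowl times R} at points of bounded distance to the symmetry axis (there $\lambda_1+\lambda_2\ge\delta H$ holds by closeness to $\text{Bowl}^{n-1}\times\mathbb{R}$), I would replace each $\mathcal{K}^{(y,s)}$ by an $O(\tfrac{(n-1)(n-2)}{2})$-rotation of itself so that it is $C(L_1)R\epsilon$ close to $\bar{\mathcal{K}}$ on $B_{RH(\bar{x})^{-1}}(\bar{x})$, and then, comparing $\bar{\mathcal{K}}$ with $\mathcal{K}^0$ via Remark \ref{Alternative Conclusion} and rotating the model, reduce without loss of generality to $\bar{\mathcal{K}}=\mathcal{K}^0$ (up to an error $CR\epsilon_1$ in the axis). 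As in the proof of Theorem \ref{Cylindrical improvement}, $|\bar{K}_\alpha|H\le 5n$ stays automatic throughout.

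Next I would set $u_\alpha=\left<\bar{K}_\alpha,\nu\right>$, which solves the Jacobi equation $\partial_t u_\alpha=\Delta u_\alpha+|A|^2 u_\alpha$, and record the a priori bounds $|u_\alpha|H\le\epsilon$ in $\hat{\mathcal{P}}(\bar{x},\bar{t},100n^{5/2},100^2n^5)$ and $|u_\alpha|H\le C(L_1)\epsilon$ in the large neighborhood. Split $\text{Bowl}^{n-1}\times\mathbb{R}$ into the tip region $\mathcal{N}$ (a neighborhood of fixed rescaled radius of the symmetry axis, equivalently $\{\lambda_1+\lambda_2\ge\delta H\}$) and its cylindrical complement, every point of which is $(\epsilon_0,100n^{5/2})$ cylindrical. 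Applying Theorem \ref{Cylindrical improvement} repeatedly, a point at rescaled distance $d$ from the axis can be improved about $c\log d$ times (the parabolic ball of radius $L_0^k H^{-1}$ stays cylindrical only while $L_0^k\lesssim\sqrt{d}$), so $|u_\alpha|H\le d^{-c}\epsilon$ there; interior parabolic estimates for the Jacobi equation upgrade this to matching bounds on $\nabla u_\alpha$ and $\nabla^2 u_\alpha$.

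Now fix an intermediate scale and consider $\mathcal{R}=\hat{\mathcal{P}}(\bar{x},\bar{t},\rho,\rho^2)$ with $1\ll\rho\ll L_1$. Its parabolic boundary splits into a part $\Gamma_{\mathrm{cyl}}$ on which the previous step gives $|u_\alpha|H\le\rho^{-c}\epsilon$, and a thin tube $\Gamma_{\mathrm{tip}}$ around the symmetry axis (extending essentially only in the $e_{n+1}$ direction) on which the symmetry does not improve and only the weaker bound $|u_\alpha|H\le C(L_1)\epsilon$ is available. The heart of the argument — the step the introduction calls ``carefully choosing barrier functions'' and the main obstacle of the proof — is to build a supersolution $\phi$ of $\partial_t\phi=\Delta\phi+|A|^2\phi$ on $\mathcal{R}$ with $\phi\ge|u_\alpha|H$ on the whole parabolic boundary and $\phi(\bar{x},\bar{t})\le\tfrac{\epsilon}{4}$. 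Here the difficulty is that the zeroth order term $|A|^2$ is positive (hence works against decay), and that on $\Gamma_{\mathrm{tip}}$ we have no decay from the iteration; I would exploit that $\mathcal{N}\cap\mathcal{R}$ is quantitatively a thin tube on which $|A|^2$ is bounded and that the propagation is one-dimensional along $e_{n+1}$, combining explicit positive Jacobi fields (such as $H$ and $\left<W,\nu\right>$ for constant vectors $W$) with a tailored correcting factor, exactly as in \cite{zhu2020so} and in the spirit of the oval barrier of \cite{angenent2020uniqueness}. The maximum principle then gives $|u_\alpha|H\le\phi$ on $\mathcal{R}$, hence $|u_\alpha|H\le C(\rho^{-c}+\cdots)\epsilon$ in $\hat{\mathcal{P}}(\bar{x},\bar{t},100n^{5/2},100^2n^5)$.

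Finally, exactly as in Step 3 of the proof of Theorem \ref{Cylindrical improvement} and in \cite{zhu2020so}, I would project $u_\alpha$ onto the finite-dimensional space of non-decaying modes, those arising from rotating and translating the axis of $\bar{\mathcal{K}}$ relative to the model, i.e. from some $P$ orthogonal to $so(n-1)\oplus so(2)$ and some $b\in\mathbb{R}^{n+1}$, and choose $P,b$ with $|P|+|b|\le C\epsilon$ so that $\tilde{K}_\alpha=\exp(P)J_\alpha\exp(-P)(x-b)$ absorbs them; Lemma \ref{Lin Alg Cylinder B} guarantees this decomposition is faithful. Combining the barrier bound for the fast (orthogonal) part of $u_\alpha$ with the tuning of the slow part, one gets $|\left<\tilde{K}_\alpha,\nu\right>|H<\tfrac{\epsilon}{2}$ and $|\tilde{K}_\alpha|H\le 5n$ in $\hat{\mathcal{P}}(\bar{x},\bar{t},100n^{5/2},100^2n^5)$ once $L_1$ is chosen large and $\epsilon_1$ small depending on $L_1$, so $(\bar{x},\bar{t})$ is $\tfrac{\epsilon}{2}$ symmetric. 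The one step I expect to require real care is the barrier construction of the third paragraph, since the ``bad'' tip portion of the boundary provides no smallness and the geometry there is not modeled on a flat cylinder.
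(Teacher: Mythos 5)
Your overall strategy coincides with the paper's: dispose of the case $d(\bar{x},l_{\bar{t}})\geq\Lambda_\star$ by the Cylindrical Improvement alone, iterate that theorem to get symmetry of order $d^{-c}\epsilon$ at distance $d$ from the axis (the paper's inductive Claim in Step 2, phrased via the nested sets $\mathrm{Int}_j$ and the monotonicity of $d(x(t),l_t)$ along the flow), and then run a maximum-principle barrier on a region around the axis whose lateral boundary carries the improved bound and whose remaining boundary is pushed far out in the splitting direction. However, the step you yourself flag as the crux — the construction of the supersolution — is exactly the content of the paper's Step 3, and your proposal does not supply it. For the record, the paper works on $\Omega_j=\{|y_{n+1}|\leq W_j,\ d(y,l_t)\kappa\leq D_j,\ t\in[-1-T_j,-1]\}$ with $D_j=2^{j/100}\Lambda_1$ and $W_j=T_j=D_j^2$, and takes
\begin{align*}
f=e^{-\phi(x_{n+1})+\lambda(t-\bar{t})}\,\frac{u}{H-\mu},\qquad \phi(s)=\tfrac{c_n^2}{n}D_j^{-1}\log\cosh(s),\quad \lambda=\tfrac{c_n^2}{n}D_j^{-1},\quad \mu=c_nD_j^{-1/2}.
\end{align*}
The point that makes the zeroth-order coefficient negative is the asymptotic lower bound $H\geq 2c_nD_j^{-1/2}=2\mu$ on the whole slab of width $D_j$ about the axis, so that $\mu|A|^2/(H-\mu)\geq 4c_n^2 n^{-1}D_j^{-1}$ dominates $\lambda$, $|\phi'|^2$, $|\phi''|$ and the $\epsilon_1$-error terms, all of which are $O(D_j^{-1})$ with smaller constants; the weight $e^{-\phi}$ then kills the non-improved boundary at $|x_{n+1}|=W_j$ because $\phi(W_j)\geq 20\log(W_j+D_j+T_j)$ while $\phi(200n^{5/2})\leq\log(W_j+D_j+T_j)$, and $e^{-\lambda T_j}=e^{-c_n^2n^{-1}D_j}$ kills the initial slice. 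Your sketch (``positive Jacobi fields such as $H$ and $\langle W,\nu\rangle$ plus a correcting factor'') is in the right neighborhood but does not identify the scaling $\mu\sim D_j^{-1/2}$, $\lambda\sim D_j^{-1}$, $|\phi'|\sim D_j^{-1/2}$ that balances the competition between the positive term $|A|^2$ and the decay you need, which is where the argument could genuinely fail if chosen wrongly.

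Your final paragraph is a second deviation: the paper performs no mode decomposition or axis re-tuning in the $\mathrm{Bowl}\times\mathbb{R}$ case. The patched vector field $\mathcal{K}^{(j)}$ obtained from Lemmas \ref{Vector Field closeness on Cylinder} and \ref{VF close Bowl times R} already satisfies $|\langle K^{(j)}_\alpha,\nu\rangle|H\leq C(W_j+D_j+T_j)^2 2^{-j}\epsilon$ on $\partial^1\Omega_j$, and the barrier estimate alone yields $|u|H\leq 2^{-j/10}C_1\epsilon<\epsilon/2$ at the center for a fixed $j=j_2$; no slow modes survive to be absorbed by a choice of $P$ and $b$. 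Moreover, the tuning you describe is modeled on the spherical-harmonic expansion of Step 3 of Theorem \ref{Cylindrical improvement}, which uses the exact product structure $S^{n-2}\times\mathbb{R}^2$; near the tip of the Bowl there is no such decomposition, so that step is not implementable as written. This does not break your proof — it is simply unnecessary once the barrier is in place — but it suggests the division into ``fast'' and ``slow'' parts of $u$ in your third paragraph is not the mechanism actually at work here.
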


\begin{proof}
  Throughout the proof, $L_1$ is always assumed to be large enough depending only on $L_0,\epsilon_0, n$ and $\epsilon_1$ is assumed to be small enough depending  on $L_1, L_0, \epsilon_0, n$.  $C$ denotes the constant depending only on $n, L_0, \epsilon_0$.

  We denote by $\Sigma$ the standard Bowl soliton in $\mathbb{R}^{n}$.
  That means the tip of $\Sigma$ is the origin, the mean curvature at the tip is $1$ and the rotation axis  is $x_n$, also it enclose the positive part of $x_n$ axis. Let $\omega_n$ be the unit vector in the $x_n$ axis that coincide with the inward normal vector of $\Sigma$ at the origin.
  We write $\Sigma_t=\Sigma+\omega_n t$, then $\Sigma_t$ is the translating mean curvature flow solution and $\Sigma=\Sigma_{0}$.

  After rescaling we may assume that $H(\bar{x},\bar{t})=1$ and $\bar{t}=0$ .
  Moreover, there exists a scaling factor $\kappa>0$ such that the parabolic neighborhood $\hat{\mathcal{P}}(\bar{x},0,L_1,L_1^2)$ can be approximated by the family of translating $\kappa^{-1}\Sigma_t\times \mathbb{R}$ with an error that is $\epsilon_1$ small in $C^{10}$ norm.

  By the above setup, the maximal mean curvature of $\kappa^{-1}\Sigma_t$ is $\kappa$. Let $p_t=\omega_n t\in\kappa^{-1}\Sigma_t$ be the tip.
  Let the straight line $l_t=\{p_t\}\times\mathbb{R}\subset \kappa^{-1}\Sigma_t\times
  \mathbb{R}$. The splitting direction $\mathbb{R}$ is assumed to be $x_{n+1}$ axis.

  By our normalization $H(\bar{x},-1)=1$, the maximality of $\kappa$ together with the approximation ensures that $\kappa\geq 1-C\epsilon_1$.

   Define $d(\bar{x},l_t)=\min_{a\in l_t}|\bar{x}-a|$ to be the Euclidean distance between $\bar{x}$ and the line $l_t$.

   We divide the proof into several steps, the first step deals with the case that $\bar{x}$ is far away from $l_{-1}$ in which we can apply cylindrical improvement. The remaining steps deal with the case that $\bar{x}$ is not too far from $l_{-1}$. \\

   \noindent\textbf{Step 1:} Using the structure of the Bowl soliton and approximation, we can find a large constant $\Lambda_{\star}$ that depends
   only on $L_0, \epsilon_0, n$ such that, if $d(\bar{x},l_{0})\geq\Lambda_{\star}$,
   then for every point $(y,s)\in\hat{\mathcal{P}}(\bar{x},0,L_0,L_0^2)$
   \begin{align*}
     \hat{\mathcal{P}}(y,s,100^2n^5,100^2n^5)\subset\hat{\mathcal{P}}(\bar{x},0,L_1,L_1^2)
   \end{align*} and $(y,s)$ is $(\epsilon_0,100n^{5/2})$ cylindrical.

    Therefore we can apply Theorem \ref{Cylindrical improvement} to conclude that
    $(\bar{x},\bar{t})$ is  $\frac{\epsilon}{2}$ symmetric and we are done.\\

   In the following steps we assume that $d(\bar{x},l_{-1})\leq\Lambda_{\star}$\\

  \noindent\textbf{Step 2:}
  By the asymptotic behaviour of $\Sigma_t$, we have the scale invariant identity:
  $$\frac{\kappa}{H(p)}=f(H(p)d(p,l_t))$$ where $p\in \kappa^{-1}\Sigma_{t}\times\mathbb{R}$ and $f$ is a continuous increasing function such that $f(0)=1$.
  In fact $f(x)=O(x)$ as $x\rightarrow+\infty$.

  By approximation we know that
  \begin{align}\label{upperlowerboundkappa}
     \frac{1}{2}<\kappa<C
  \end{align}
  This means $\kappa^{-1}\Sigma_{t}\times\mathbb{R}$ is equivalent to the standard Bowl$\times\mathbb{R}$ up to a scaling factor depending on $L_0,\epsilon_0,n$. \\

   We define a series of set $\text{Int}_j(\hat{\mathcal{P}}(\bar{x},0,L_1,L_1^2))$ inductively:
   \begin{align*}
   &\text{Int}_{0}(\hat{\mathcal{P}}(\bar{x},0,L_1,L_1^2))=\hat{\mathcal{P}}(\bar{x},0,L_1,L_1^2)\\
  &(y,s) \in \text{Int}_j(\hat{\mathcal{P}}(\bar{x},0,L_1,L_1^2))\\
\Leftrightarrow &
  \hat{\mathcal{P}}(y,s,(10n)^6L_0,(10n)^{12}L_0^2)\subset \text{Int}_{j-1}(\hat{\mathcal{P}}(\bar{x},0,L_1,L_1^2))
  \end{align*}
  We  abbreviate $\text{Int}_j(\hat{\mathcal{P}}(\bar{x},0,L_1,L_1^2))$
  as $\text{Int}(j)$. It's clear that Int($j$) is a decreasing set that are all contained in $\hat{\mathcal{P}}(\bar{x},0,L_1,L_1^2)$. \\

    \noindent\textit{Claim: }  there exists $\Lambda_1\gg \Lambda_{\star}$ depending only on $\epsilon_0, L_0$,
    such that for any point $(x,t)\in \text{Int}_j(\hat{\mathcal{P}}(\bar{x},0,L_1,L_1^2))$,
    if $d(x,l_t)\geq 2^{\frac{j}{100}}\Lambda_1$, then $(x,t)$ is $2^{-j}\epsilon$ symmetric, for $j=0,1,2,...$\\

   By the assumption the case $j=0$ is automatically true. Now suppose that the statement is true for $j-1$.

  By the structure of the Bowl solition and approximation, we can find constant
  $\Lambda_1$ that depends only on $L_0,\epsilon_0,n$ such that
   \begin{align}\label{bigdistimpliescylindrical}
     &\text{If }  d(x,l_t)\geq \Lambda_{1},
    \text{then every point }(y,s)\in\hat{\mathcal{P}}(x,t,L_0,L_0^2) \text{ is } (\epsilon_0,100n^{5/2})\\ &  \text{ cylindrical}\text{ and }  d(x,l_t)H(x)\geq 1000L_0. \notag\\
    &\text{ Moreover } \hat{\mathcal{P}}(y,s,100^2n^5,100^2n^5)\subset\hat{\mathcal{P}}(x,t,(10n)^6L_0,(10n)^{12}L_0^2)\notag
   \end{align}

   \noindent\textit{Remark:} We need to rescale the picture before checking $(\epsilon_0,100n^{5/2})$ cylindrical condition. The scaling factor depends only on $L_1$. Thus it can be conquered by choosing $\epsilon_1$ small enough, since $\epsilon_1$ is chosen after $L_1$. \\

    By approximation and structure of the Bowl soliton, we know that if $(x,t)\in\hat{\mathcal{P}}(\bar{x},0,L_1,L_1^2)$ with $d(x,l_t)\geq 1 \geq \frac{1}{2}\kappa^{-1}$, then
    \begin{itemize}
      \item $\frac{\left<\nu, x-x'\right>}{|x-x'|}\geq -C\epsilon_1$
      \item $\frac{\left<\omega_n, x-x'\right>}{|x-x'|}\geq C(n)^{-1}$
    \end{itemize}
    where $x'$ is a point on $p_t$ such that $|x-x'|=d(x,p_t)$.
    Let $x(t)$ be the trajectory of $x$ under mean curvature flow, such that
    $x(t_0)=x_0$ and $d(x_0,p_{t_0})> 1$, then for $t\leq t_0$
    \begin{align}\label{distancetotipdecreasing}
      \frac{d}{dt}d(x(t),l_t)^2&= \left<x-x',H(x)\nu(x)-\kappa\omega_n\right>\notag\\
      &\leq|x-x'|(C\epsilon_1 - C(n)^{-1}) < 0
    \end{align}
    Therefore $x(t)$ increases as $t$ decreases, as long as $x(t)\in\hat{\mathcal{P}}(\bar{x},0,L_1,L_1^2)$.

    Now suppose that $(x,t)\in\text{Int}(j)$ and $d(x,l_t)\geq 2^{\frac{j}{100}}\Lambda_1.$
    For any point $(\tilde{x},t)\in\hat{\mathcal{P}}(x,t,L_0,L_0^2)$, by (\ref{bigdistimpliescylindrical})
    \begin{align}\label{distance increasing1}
      d(\tilde{x},l_t)&\geq d(x,l_t)-d(x,\tilde{x})\notag\\
      &\geq d(x,l_t)-L_0H(x)^{-1}\notag\\
      &\geq (1-1/1000)d(x,l_t)\notag\\
      &\geq (1-1/1000) 2^{\frac{j}{100}}\Lambda_1>2^{\frac{j-1}{100}}\Lambda_1
    \end{align}
    Additionally, every point $(y,s)\in\hat{\mathcal{P}}(x,t,L_0,L_0^2)$ must be in Int($j-1$), hence by (\ref{distancetotipdecreasing}) and (\ref{distance increasing1}) $d(y,l_s)>2^{\frac{j-1}{100}}\Lambda_1$ and therefore is $2^{-j+1}\epsilon$ symmetric by induction hypothesis.

    Together with (\ref{bigdistimpliescylindrical}), the condition of the cylindrical improvement are satisfied. Hence $(x,t)$ is $2^{-j}\epsilon$ symmetric and the conclusion is true for $j$, the Claim follow from induction.\\

   Finally, we describe the size of Int($j$) in terms of $n,L_0$.
    Recall that $d(\bar{x},l_{0})\leq \Lambda_{\star}$. By approximation we have:

    \begin{align*}
      &\hat{\mathcal{P}}(\bar{x},0,(10n)^{-8}L_0^{-1}\hat{L},(10n)^{-16}L_0^{-2}\hat{L}^2)
    \subset \text{Int}_1\hat{\mathcal{P}}(\bar{x},0,\hat{L},\hat{L}^2)
    \end{align*}
    whenever $(10n)^{-16} L_0^{-2}\hat{L}>\Lambda_1$ and $\hat{L}<L_1$.

    By  induction on $j$ we obtain:
    \begin{align}\label{size of Int 2}
      \hat{\mathcal{P}}(\bar{x},0,(10n)^{-8j} L_0^{-j}L_1,(10n)^{-16j} L_0^{-2j}L_1)\subset\text{Int}(j)
    \end{align}
    whenever $(10n)^{-8j-8}L_0^{-j-1}L_1>\Lambda_1$. \\

    \noindent\textbf{Step 3:} Define the region

    \begin{align*}
      \Omega_j^t=&\{(y,t)\big||y_{n+1}|\leq W_j, d(y,l_t)\kappa\leq D_j\}\\
      \Omega_j=&\bigcup\limits_{t\in[-1-T_j,-1]} \Omega_j^t\\
      \partial^1\Omega_j=&\{(y,s)\in\partial\Omega_j|d(y,l_s)\kappa= D_j\}\\
      \partial^2\Omega_j=&\{(y,s)\in\partial\Omega_j\big||y_n|=W_j|\}
    \end{align*}
    where $D_j=2^{\frac{j}{100}}\Lambda_1$, $W_j=T_j=2^{\frac{j}{50}}\Lambda_1^2$.

    By repeatedly applying Lemma \ref{Vector Field closeness on Cylinder} and
    \ref{VF close Bowl times R}, we obtain a normalized set of rotation vector fields $\mathcal{K}^{(j)} =\{K_{\alpha}^{(j)}, 1\leq\alpha\leq \frac{(n-1)(n-2)}{2}\}$ such that

    \begin{align}
      &\max_{\alpha}|\left<K^j_{\alpha},\nu\right>|H\leq C(W_j+D_j+T_j)^2 2^{-j}\epsilon \text{ on }\partial^1\Omega_j^t \label{eq1}\\
      &\max_{\alpha}|\left<K^j_{\alpha},\nu\right>|H \leq C(W_j+D_j+T_j)^2 \epsilon \text{ on }\Omega_j \label{eq2}\\
      &\max_{\alpha}|K^j_{\alpha}|H \leq 2n\text{ on }\Omega_j \label{eq3}
    \end{align}

    (\ref{eq3}) follows from approximating by the normalized set of vector fields
     $\mathcal{K}^0 = \{K^0_{\alpha}(x) = J_{\alpha}x, 1\leq \alpha\leq \frac{(n-1)(n-2)}{2}\}$ and the fact that $\max_{\alpha}|K^0_{\alpha}|H\leq n$
     on Bowl$\times\mathbb{R}$ (see Appendix \ref{ode of bowl}).
      \\

     For each $K^j_{\alpha}$ ($1\leq \alpha\leq\frac{(n-1)(n-2)}{2}$, $j\geq 1$), let $u=\left<K_{\alpha}^j,\nu\right>$ on $M_t$
     and define the function
      $$f(x,t)=e^{-\Phi(x)+\lambda(t-\bar{t})}\frac{u}{H-\mu}$$
     where $\lambda,c$ will be determined later.

     $H$ and $u$ satisfy the Jacobi equation
     \begin{align*}
       \partial_t u & =\Delta u+|A|^2u\\
       \partial_t H & =\Delta H+ |A|^2H
     \end{align*}

     We get the evolution equation for $\frac{u}{H-\mu}$:
     \begin{align*}
       (\partial_t -\Delta )\left(\frac{u}{H-\mu}\right)=&\frac{(\partial_t -\Delta )u}{H-\mu}-
       \frac{u(\partial_t -\Delta )H}{(H-\mu)^2}+\frac{2\left<\nabla u,\nabla H\right>}{(H-\mu)^2}-\frac{2|\nabla H|^2u}{(H-\mu)^3}\\
       =&-\frac{cu|A|^2}{(H-\mu)^2}+2\left<\frac{\nabla H}{H-\mu},\nabla\left(\frac{u}{H-\mu}\right)\right>
     \end{align*}

     Then the evolution equation for $f$:
     \begin{align}\label{EQNforf}
       (\partial_t -\Delta )f=&e^{-\Phi+\lambda(t-\bar{t})}(\partial_t -\Delta )\left(\frac{u}{H-\mu}\right)+(\lambda-\partial_t\Phi+\Delta \Phi-|\nabla \Phi|^2)f  \notag\\
       &+2e^{-\Phi+\lambda(t-\bar{t})}\left<\nabla \Phi,\nabla\left(\frac{u}{H-\mu}\right)\right>\notag\\
       =&\left(\lambda-\frac{\mu|A|^2}{H-\mu}-\partial_t\Phi+\Delta \Phi-|\nabla\Phi|^2\right)f\notag\\
       &+2e^{-\Phi+\lambda(t-\bar{t})}\left<\nabla\left(\frac{u}{H-\mu}\right),\nabla \Phi+\frac{\nabla H}{H-\mu}\right>\notag\\
       =&\left(\lambda-\frac{\mu|A|^2}{H-\mu}-\partial_t\Phi+\Delta \Phi-|\nabla\Phi|^2\right)f+2\left<\nabla f,\nabla\Phi+\frac{\nabla H}{H-\mu}\right>\notag\\
       &+2\left<\nabla \Phi,\nabla\Phi+\frac{\nabla H}{H-c}\right>f\notag\\
       =&\left(\lambda-\frac{\mu|A|^2}{H-\mu}-\partial_t\Phi+\Delta \Phi +|\nabla\Phi|^2+2\frac{\left<\nabla \Phi,\nabla H\right>}{H-\mu}\right)f\\
       &+2\left<\nabla f,\nabla\Phi+\frac{\nabla H}{H-\mu}\right> \notag
     \end{align}

      Now let $\Phi(x)=\phi(x_{n+1})=\phi(\left<x,\omega_{n+1}\right>)$ where $\phi$ is a one variable function.

     We have the following computations:
     \begin{itemize}
       \item $\partial_t\Phi=\phi'(x_{n+1})\left<\partial_t x, \omega_{n+1}\right>=\phi'(x_{n+1})\left<\vec{H},\omega_{n+1}\right>$
       \item $\Delta\Phi=\phi'(x_{n+1})\Delta x_{n+1}+\phi''(x_{n+1})|\nabla x_{n+1}|^2=\phi'(x_{n+1})\left<\vec{H},\omega_{n+1}\right>+\phi''(x_{n+1})|\omega_{n+1}^T|^2$
       \item $\left<\nabla\Phi,\nabla H\right>=\phi'(x_{n+1})\left<\nabla x_{n+1},\nabla H\right>=\phi'(x_{n+1})\left<\omega_{n+1}^{T},\nabla H\right>$
       \item $|\nabla \Phi|^2=\phi'(x_{n+1})^2|\nabla x_{n+1}|^2=\phi'(x_{n+1})^2|\omega_{n+1}^T|^2$
     \end{itemize}

      Here $x_{n+1}$ is a short hand of $\left<x,\omega_{n+1}\right>$, $\omega_{n+1}^T$ denotes the projection onto the tangent plane of $M_t$.
      In the second line we used the identity $\Delta x=\vec{H}$.

      Since $\left<\vec{H},\omega_{n+1}\right>=\left<\omega_{n+1},\nabla H\right>=0$ on $\kappa^{-1}\Sigma\times\mathbb{R}$ and the curvature is
       bounded by $C\kappa$, by approximation we have:
     \begin{align*}
       |\omega_{n+1}^T-\omega_{n+1}| + |\left<\nabla H,\omega_{n+1}\right>| + |\left<\vec{H},\omega_{n+1}\right>|\leq C(L_1)\epsilon_1
     \end{align*} in
     $\hat{\mathcal{P}}(\bar{x},0,L_1,L_1^2)$.

     Therefore we have the estimate:
     \begin{align}\label{Coeff Est D Phi}
       |\partial_t\Phi| & \leq C(L_1)\epsilon_1|\phi'(x_{n+1})|\notag\\
       |\Delta\Phi|  & \leq C(L_1)\epsilon_1|\phi'(x_{n+1})|+|\phi''(x_{n+1})| \notag\\
       |\left<\nabla\Phi,\nabla H\right>| & \leq  C(L_1)\epsilon_1|\phi'(x_{n+1})| \notag\\
       |\nabla \Phi|^2&\leq\phi'(x_{n+1})^2
     \end{align}
     in $\hat{\mathcal{P}}(\bar{x},0,L_1,L_1^2)$.

     By the asymptotic of the Bowl soliton and approximation, there is a constant $c_n \in(0,1)$ depending only on $n$ such that
     $H(x) \geq 2c_nd(x,l_t)^{-\frac{1}{2}}$ in $\hat{\mathcal{P}}(\bar{x},0,L_1,L_1^2)$ when $d(x,l_t)$ is large. Moreover $|A|^2\geq \frac{H^2}{n}$ in $\hat{\mathcal{P}}(\bar{x},0,L_1,L_1^2)$.

     Next we choose an even function $\phi\in C^2(\mathbb{R})$ satisfying the following:
     \begin{align}
       |\phi'| & \leq \frac{c_n}{n}D_j^{-1/2} \label{Choice of varphi1}\\
       |\phi''| & \leq \frac{c_n^2}{n}D_j^{-1} \label{Choice of varphi2}\\
       \phi(W_j)& \geq \log\left((W_j+D_j+T_j)^{20}\right) \label{Choice of varphi3}\\
       \phi(200n^{5/2}) & \leq \log\left(W_j+D_j+T_j\right) \label{Choice of varphi4}\\
       \phi(0) &= 0, \ \phi'>0 \text{ when } x>0 \label{Choice of varphi5}
     \end{align}

     We can take $$\phi(s)=\frac{c_n^2}{n}D_j^{-1}\log(\cosh(s))$$
     for large $j$. Note that $\log(\cosh(s))\in (s-1,s]$.

     It's straightforward to check (\ref{Choice of varphi1}) (\ref{Choice of varphi2}) (\ref{Choice of varphi5}).
     To check (\ref{Choice of        varphi3}) (\ref{Choice of        varphi4}), note that there is a $j_1$ depending only on $n, \Lambda_1$ such that if $j\geq j_1$, then
     \begin{align*}
       \phi(W_j) &\geq \frac{c_n^2}{n}D_j^{-1}(W_j-1)\\
       &\geq \frac{c_n^2}{2n}2^{\frac{j}{100}} \\
       & \geq \log((W_j+D_j+T_j)^{20})
     \end{align*}
     and
     \begin{align*}
       \phi(200n^{5/2}) &\leq \frac{c_n^2}{n}D_j^{-1}\cdot 200n^{5/2} \\
       &\leq \log(2^{\frac{j}{100}}) \\
       &\leq  \log\left(W_j+D_j+T_j\right)
     \end{align*}

     Now we let  $\lambda=\frac{c_n^2}{n}D_j^{-1}, \mu=c_n D_j^{-1/2}$, therefore $H\geq 2\mu$ in $\Omega_j$ and
     \begin{align}\label{Coeff A2 / H}
       \frac{\mu|A|^2}{H-\mu} \geq \frac{\mu H^2}{n(H-\mu)}\geq \frac{2\mu H}{n}\geq \frac{4\mu^2}{n}
     \end{align}

     With (\ref{Coeff Est D Phi}) (\ref{Coeff A2 / H}) (\ref{Choice of varphi1})-(\ref{Choice of varphi5}), we have:
     \begin{align*}
       &\lambda-\frac{\mu|A|^2}{H-\mu}-\partial_t\Phi+\Delta \Phi+|\nabla\Phi|^2+2\frac{\left<\nabla \Phi,\nabla H\right>}{H-\mu}\\
       \leq&\lambda-\frac{4\mu^2}{n}+C(L_1)\epsilon_1|\phi'(x_n)|+|\phi''(x_n)|+\phi'(x_n)^2+C(L_1)\epsilon_1|\phi'(x_n)|\mu^{-1} \\
       <& \frac{c_n^2}{n}D_j^{-1}-\frac{4c_n^2}{n}D_j^{-1}+\frac{c_n^2}{n}D_j^{-1}
       +(\frac{c_n}{n}D_j^{-\frac{1}{2}})^2 + C(L_1)\epsilon_1 < 0
     \end{align*}

     Then maximal principle applies to (\ref{EQNforf}), we have
     $\sup\limits_{\Omega_j}|f|\leq \sup\limits_{\partial\Omega_j}|f|$.

     The boundary $\partial\Omega_j=\partial^1\Omega_j\cup\partial^2\Omega_j\cup\Omega_j^{-1-T_j}$,
     we can estimate $f$ on each of them:
     \begin{itemize}
       \item  on the boundary portion $\partial^1\Omega_j$:
       \begin{align*}
                \sup\limits_{\partial^1\Omega_j}|f|\leq & \sup\limits_{\partial^1\Omega_j}\frac{|u|H}{(H-\mu)H}\\
                \leq &C(W_j+D_j+T_j)^2 2^{-j}\epsilon\cdot D_j\\
                \leq &2^{-\frac{j}{2}}C\epsilon
             \end{align*}
       \item on the portion $\partial^2\Omega_j$:
       \begin{align*}
         \sup\limits_{\partial^2\Omega_j}|f|\leq & \sup\limits_{\partial^2\Omega_j}e^{-\phi(W_j)}\frac{|u|H}{H(H-\mu)}\\
         <&e^{-\phi(W_j)}\cdot C(W_j+D_j+T_j)^2 \epsilon\cdot D_j\\
        \leq& C(W_j+D_j+T_j)^{-10}\epsilon \\
        \leq & 2^{-\frac{j}{5}} C\epsilon
       \end{align*}
       we used $\phi(W_j)\geq \log\left((W_j+D_j+T_j)^{20}\right)$
       \item on the portion $\partial^3\Omega_j$, we have
       \begin{align*}
         \sup\limits_{\Omega_j^{-1-T_j}}|f|\leq & e^{-\lambda T_j}\sup\limits_{\Omega_j}\frac{|u|H}{H(H-\mu)}\\
         \leq &e^{-\frac{c_n^2}{n}T_j D_j^{-1}}\cdot C(W_j+D_j+T_j)^2\cdot 32D_j\epsilon\\
         \leq &2^{-j} C\epsilon
       \end{align*}
       where the last inequality used $T_jD_j^{-1} = D_j\geq jn c_n^{-2}-C$.
     \end{itemize}

     Putting them together we obtain that:
     \begin{align}\label{maxf}
        \sup\limits_{\Omega_j}|f|\leq &\sup\limits_{\partial\Omega_j}|f|\leq  2^{-\frac{j}{5}}C\epsilon
     \end{align}

     By approximation, in the parabolic neighborhood
     $\hat{\mathcal{P}}(\bar{x},0,100n^{5/2},100^2n^5)$ we have
     \begin{itemize}
       \item $|x_{n+1}|<200n^{5/2}$
       \item $-2\cdot 10^4n^5\leq t\leq 0$
       \item $H\leq C$
     \end{itemize}
     We also assume that $j$ is appropriate such that
     \begin{align}\label{Condition on j 1}
       \hat{\mathcal{P}}(\bar{x},0, 100n^{5/2}, 100^2n^5)\subset\Omega_j
     \end{align}
     \begin{align}\label{Cond on j 2}
       \Omega_j\subset\text{Int}(j)
     \end{align}
     Hence,
     \begin{align}\label{finalineqn}
       |u(y,s)|H(y,s)= & e^{\phi(x_{n+1})-\lambda t}(H(y,s)-\mu)f(y,s)H(y,s)\notag\\
        \leq& e^{\phi(200n^{5/2})+2\cdot10^4n^5\lambda}\cdot 2^{-\frac{j}{5}}C\epsilon\notag \\
       \leq&(W_j+D_j+T_j)\cdot 2^{-\frac{j}{5}}C\epsilon \notag\\
       \leq& 2^{-\frac{j}{10}} C_1\epsilon
     \end{align}
     in $\hat{\mathcal{P}}(\bar{x},0,100n^{5/2},100^2n^5)$. Here $C_1$
     depends only on $L_0,\epsilon_0,n$.

     We pick constants in the following order:
     First we can find $j_2\geq j_1$ depending only on $L_0,\epsilon_0,n$ such that
     (\ref{Condition on j 1}) (\ref{Cond on j 2}) holds with $j=j_2$ and
     \begin{align}\label{choice of j}
       2^{-\frac{j_2}{10}}C_1\epsilon <\frac{\epsilon}{2}
     \end{align}
     Next we pick $L_1$ large enough, finally we take $\epsilon_1$ small enough.
     (Therefore, $L_1$ may depend on $j_2, L_0, \epsilon_0, n$ and $\epsilon_1$ may depend on $L_1, j_2, L_0, \epsilon_0, n$)

     With such choice of constants, we can take $j=j_2$. Since
     (\ref{eq3}) ,(\ref{finalineqn}) 
     applies to every
     $\alpha\in \{1,2,...\frac{(n-1)(n-2)}{2}\}$, we obtain that
     \begin{itemize}
       \item $\max_{\alpha}|\left<\bar{K_{\alpha}},\nu\right>|H<\frac{\epsilon}{2}$
       \item $\max_{\alpha}|\bar{K_{\alpha}}|H<5n$
     \end{itemize}
     in $\hat{\mathcal{P}}(\bar{x},0,100n^{5/2},100^2n^5)$.

     By definition $(\bar{x},\bar{t}) = (\bar{x},0)$ is $\frac{\epsilon}{2}$ symmetric.

\end{proof}

 \section{Canonical neighborhood Lemmas and the proof of the main theorem}\label{section proof of main theorem}
 In this section we prove the main theorem. While this section is mostly the same as the Section 4 of \cite{zhu2020so}, we decide to contain most of the argument here for readers' convenience.

    Recall that the translating soliton satisfies the equation $ H = \left<V,\nu\right>$ for some fixed nonzero vector $V$, where $\nu$ is the inward pointing normal vector. With a translation and dilation we may assume that $V=\omega_{n}$ is a unit vector. The equation then becomes:
\begin{align}\label{translatoreqn1}
  H &= \left<\omega_n,\nu\right>
\end{align}
  $M_t = M+t\omega_n$ is an mean curvature flow.
    Let the height function in the space time to be
\begin{align}\label{height def}
  h(x,t)=\left<x,\omega_n\right>-t
\end{align}

  Throughout this section the mean curvature flow solution is always assumed to be embedded and complete.

\begin{Def}
  A mean convex mean curvature flow solution $M_t^n\subset\mathbb{R}^{n+1}$
  is said to be uniformly $k$-convex, if there is a positive constant $\beta$
  such that
  \begin{align*}
    \lambda_1+...+\lambda_k\geq \beta H
  \end{align*}
  along the flow, where $\lambda_1\leq\lambda_2\leq ...\leq \lambda_n$ are principal curvatures.
\end{Def}

\begin{Def}
  For any ancient solution $M_t$ defined on $t<0$, a blow down limit, or a tangent flow at $-\infty$, is the limit flow of $M^j_t=c_j^{-1}M_{c_j^2t}$ for some sequence $c_j\rightarrow\infty$, if the limit exists.
\end{Def}

    If $M$ is mean convex and noncollapsed ancient solution, then at least one blow down limit exists and any blow down sequence $c_j^{-1}M_{c_j^2t}$ has a subsequence that converges smoothly to $S^k_{\sqrt{-2kt}}\times\mathbb{R}^{n-k}$ for some $k=0,1,...,n$ with possibly a rotation, see e.g. \cite{haslhofer2017mean}, \cite{sheng2009singularity}, \cite{white2003nature}, \cite{white2000size}, \cite{huisken1999convexity}.

    Recall that the Gaussian density of a surface is defined by:
    \begin{align*}
      \Theta_{x_0,t_0}(M)=\int_{M}\frac{1}{(4\pi t_0)^{\frac{n}{2}}}e^{-\frac{|x-x_0|^2}{4t_0}}d\mu
    \end{align*}

    Using Huisken's monotoncity formula \cite{huisken1990asymptotic}, we will have the following:

    \begin{Lemma}\label{entropy limit lemma}
        For any mean convex and noncollapsed ancient solution $M_t$ defined on $t<0$, suppose that $M^{\infty}_t$ is a blow down limit, then $M_t^{\infty}$ must be the same up to rotation.
    \end{Lemma}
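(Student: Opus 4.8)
The plan is to extract from Huisken's monotonicity formula a single numerical invariant of $M_t$ — the limiting Gaussian density as $t\to-\infty$ — and to observe that this number already determines which shrinking cylinder can occur as a blow-down limit. Since the Gaussian densities $\Theta_0>\Theta_1>\cdots>\Theta_n$ of the models $S^k_{\sqrt{-2kt}}\times\mathbb{R}^{n-k}$ are pairwise distinct (a classical computation), and rotating a model does not change its density, matching densities forces the integer $k$ and hence the model up to rotation.

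First I would set $\Theta(t):=\Theta_{0,0}(M_t)=\int_{M_t}(4\pi(-t))^{-n/2}e^{-|x|^2/(4(-t))}\,d\mu$. Because $M_t$ is mean convex and noncollapsed it has uniformly bounded area ratios (equivalently, finite entropy), so $\Theta(t)$ is finite for every $t<0$; by Huisken's monotonicity formula with base point $(0,0)$ the function $t\mapsto\Theta(t)$ is nonincreasing on $(-\infty,0)$. Hence $\Theta_{-\infty}:=\lim_{t\to-\infty}\Theta(t)$ exists and is a finite positive number, depending only on $M_t$. Now let $M^\infty_t$ be any blow-down limit, say the smooth limit of $M^j_t:=c_j^{-1}M_{c_j^2t}$ with $c_j\to\infty$. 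A change of variables shows that $\Theta_{0,0}$ is scale invariant, $\Theta_{0,0}(M^j_s)=\Theta_{0,0}(M_{c_j^2s})=\Theta(c_j^2s)$, so for every fixed $s<0$ we get $\Theta_{0,0}(M^j_s)\to\Theta_{-\infty}$. Applying Huisken's monotonicity formula to the rescaled flow $M^j$ over an interval $s_1<s_2<0$,
\begin{align*}
\Theta_{0,0}(M^j_{s_1})-\Theta_{0,0}(M^j_{s_2})\;=\;\int_{s_1}^{s_2}\!\!\int_{M^j_s}\Big|\vec{H}+\frac{x^{\perp}}{2(-s)}\Big|^2(4\pi(-s))^{-n/2}e^{-|x|^2/(4(-s))}\,d\mu\,ds ,
\end{align*}
and the left-hand side tends to $0$ as $j\to\infty$. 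Passing to the limit — the convergence is smooth on compact sets and, thanks to the scale-invariant area-ratio bounds, the Gaussian tails $\int_{M^j_s\setminus B_R}(4\pi(-s))^{-n/2}e^{-|x|^2/(4(-s))}\,d\mu$ are uniformly small in $j$ — I conclude that $M^\infty_t$ has vanishing Huisken defect, hence is a self-shrinker, and that $\Theta_{0,0}(M^\infty_s)\equiv\Theta_{-\infty}$. Combined with the fact (recalled earlier in this section) that a blow-down limit is $S^k_{\sqrt{-2kt}}\times\mathbb{R}^{n-k}$ up to rotation for some $k$, this gives $\Theta_k=\Theta_{-\infty}$; since the $\Theta_k$ are pairwise distinct, $k$ is uniquely determined, independently of the blow-down sequence chosen. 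Therefore all blow-down limits of $M_t$ coincide up to rotation.

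The step I expect to need the most care is the passage to the limit in the integrals: one must rule out loss of Gaussian mass at spatial infinity under the noncompact convergence $M^j_s\to M^\infty_s$. This is exactly where noncollapsing enters — it yields local area bounds that, being scale invariant, hold uniformly in $j$ and translate into a uniform smallness of the Gaussian weight outside large balls, upgrading the compact-set convergence to convergence of the full density integrals. The remaining ingredients — scale invariance of $\Theta_{0,0}$, monotonicity, and the strict ordering $\Theta_0>\Theta_1>\cdots>\Theta_n$ — are standard.
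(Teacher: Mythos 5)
Your proof is correct and follows essentially the same route as the paper: Huisken monotonicity gives a limiting Gaussian density as $t\to-\infty$, scale invariance transfers it to any blow-down sequence, the noncollapsing/convexity-induced area-ratio bounds control the Gaussian tails so the densities converge, and pairwise distinctness of the cylinder densities pins down $k$. (One immaterial slip: the ordering is $\Theta_1>\Theta_2>\cdots>\Theta_n>\Theta_0=1$ rather than $\Theta_0>\Theta_1>\cdots$, but your argument only uses that these values are pairwise distinct.)
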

    \begin{proof}
      Suppose that $M^{\infty}_t$ is the limiting flow of $M^j_t=c_j^{-1}M_{c_j^2t}$ for some sequence $c_j\rightarrow\infty$. By the previous discussion, $M^{\infty}_t$ must be one of the self-similar generalized cylinders $S^k_{\sqrt{-2kt}}\times\mathbb{R}^{n-k}$.
      Moreover, mean convex ancient solution must be convex by the convexity estimate \cite{huisken1999convexity} (also c.f \cite{haslhofer2017mean}). Therefore the convergence is smooth with multiplicity one.
      It suffices to only consider the time slice $M^{\infty}_{-1}$ by the scale invariance of the entropy.

      By Huisken's monotonicity formula (\cite{huisken1990asymptotic}),
      $\Theta_{x_0,t_0+s_0-t}(M_{t})$  is monotone increasing in $t$.
      Hence $$\Theta = \lim\limits_{t\rightarrow\infty} \Theta_{x_0,t_0+s_0-t}(M_{t})\leq \infty$$ exists.

      By the scaling property of $\Theta$ we can compute
      \begin{align}\label{entropy bound 1}
        \Theta_{c_j^{-1}x_0,c_j^{-2}(t_0+s_0)-t}(c_j^{-1}M_{c_j^2t})=\Theta_{x_0,t_0+s_0-c_j^2t}(M_{c_j^2t})
      \end{align}
      whenever $c_j^2t<s_0$.

      By convexity, $\text{Vol}(M_t\cap B_R(0))\leq CR^n$ for some uniform constant $C$, so $F_{x',t'}(M_t\backslash B_R(0))\leq Ce^{-R^2/8}$ for some uniform constant $C$, whenever $x'$ and $\log t'$ are bounded.

      Taking $t=-1$ and letting $j$ large in (\ref{entropy bound 1}).
      Since $c_j^{-1}x_0\rightarrow 0, c^{-2}_j(t_0+s_0)-t\rightarrow 1$ and $c_j^{-1}M_{-c_j^2}$ converge smoothly to $M^{\infty}_{-1}$, we have
      \begin{align*}
       \Theta_{0,1}(M^{\infty}_{-1})=\lim\limits_{j\rightarrow\infty}\Theta_{c_j^{-1}x_0,c_j^{-2}(t_0+s_0)-t}(c_j^{-1}M_{c_j^2t})=\Theta
      \end{align*}
      It's easy to compute that $\Theta_{0,1}(S^{k}\times\mathbb{R}^{n-k})$, $k=0,1,...,n$ are all different numbers, we know that $M_{-1}^{\infty}$ must have the same shape, which means they must be the same up to rotation.

      \begin{Rem}
        By the work of Colding and Minicozzi \cite{colding2015uniqueness}, the blow down limit is actually unique (without any rotation), but we don't need this strong result in this paper.
      \end{Rem}

    \end{proof}

    In particular if $M_t$ is a translating solution, we can interpret the blow down process in a single time slice:
    \begin{Cor}\label{blow down for translator}
    Given $M^n\subset\mathbb{R}^{n+1}$ a strictly mean convex, noncollapsed translator which satisfies (\ref{translatoreqn1}),
    then for any $R>1, \epsilon>0$, there exists a large $C_0$ such that, if $a\geq C_0$ then $a^{-1}(M-a^2\omega_n)\cap B_{R}(0))$ is $\epsilon$ close to  a $S^k_{\sqrt{2k}}\times\mathbb{R}^{n-k} $ with some rotation for a fixed $1\leq k\leq n - 1$ in $C^{10}$ norm.
    \end{Cor}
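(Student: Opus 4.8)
The plan is to read the statement off the blow\nobreakdash-down analysis already set up for the eternal flow $M_t:=M+t\omega_n$. First I would note that, since $M$ is strictly mean convex and noncollapsed, $M_t$ is a mean convex, noncollapsed ancient solution, hence convex by the convexity estimate cited above, and noncompact: a closed hypersurface has surjective Gauss map, so $H=\langle\omega_n,\nu\rangle$ would take negative values, contradicting mean convexity. By the discussion preceding Lemma \ref{entropy limit lemma}, every blow\nobreakdash-down sequence $c_j^{-1}M_{c_j^2 t}$ subconverges smoothly and with multiplicity one to a self\nobreakdash-similar cylinder $S^k_{\sqrt{-2kt}}\times\mathbb{R}^{n-k}$ for some $0\le k\le n$ (up to a rotation), and by Lemma \ref{entropy limit lemma} this limit, in particular the integer $k$, is independent of the sequence. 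Evaluating at $t=-1$ and putting $a=c_j$ gives $c_j^{-1}M_{-c_j^2}=a^{-1}(M-a^2\omega_n)$, so $a^{-1}(M-a^2\omega_n)$ converges in $C^\infty_{\mathrm{loc}}$, hence in $C^{10}$ on $B_R(0)$, to $S^k_{\sqrt{2k}}\times\mathbb{R}^{n-k}$ after a rotation. Since this holds along every sequence $a_j\to\infty$, a routine contradiction argument promotes it to the quantitative statement: for each $R>1$ and $\epsilon>0$ there is $C_0$ so that $a\ge C_0$ forces the claimed $\epsilon$\nobreakdash-closeness.

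It then remains to exclude $k=0$ and $k=n$. For $k=n$ the model $S^n_{\sqrt{2n}}$ is compact, so for $j$ large $c_j^{-1}M_{-c_j^2}$ would be a convex hypersurface whose intersection with $B_{2\sqrt{2n}}(0)$ is uniformly close, in $C^1$, to all of $S^n_{\sqrt{2n}}$. If the convex body it bounds contained a point outside $B_{2\sqrt{2n}}(0)$, then by convexity the segment joining that point to the enclosed region near $\bar B_{\sqrt{2n}}(0)$ would lie in the body and meet $\partial B_{2\sqrt{2n}}(0)$, producing a boundary point far from $S^n_{\sqrt{2n}}$ — impossible. Hence the body is bounded and $M$ is compact, contradicting the previous paragraph; so $k\le n-1$.

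For $k=0$ the model is a static, multiplicity\nobreakdash-one hyperplane, whose Gaussian density is $1$. Fix $x_0\in M_0$. By the monotonicity formula used in the proof of Lemma \ref{entropy limit lemma}, the Gaussian density $\Theta_{x_0,0}(M_t)$ is monotone in $t$; its limit as $t\uparrow 0$ equals the multiplicity $1$ at the smooth point $x_0$, while its limit as $t\downarrow-\infty$ equals the blow\nobreakdash-down density, which is $\Theta_{0,1}(S^0\times\mathbb{R}^n)=1$. Thus $\Theta_{x_0,0}(M_t)\equiv 1$ for $t<0$, and the rigidity case of Huisken's monotonicity formula forces $M_t$ to be a shrinking soliton about $(x_0,0)$. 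Combined with the translator equation $H=\langle\omega_n,\nu\rangle$ this yields $\langle x-a,\nu\rangle\equiv 0$ on $M$ for a fixed $a\in\mathbb{R}^{n+1}$, so $M$ is a complete smooth cone, i.e. a hyperplane, contradicting strict mean convexity. Hence $k\ge 1$, and therefore $1\le k\le n-1$, as claimed.

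The main obstacle is handling these two borderline models. Ruling out $k=0$ genuinely needs the rigidity in Huisken's monotonicity formula (equivalently, entropy rigidity), since strict mean convexity and noncollapsing alone do not preclude an asymptotically flat translator; and in both borderline cases one must be somewhat careful to upgrade the local smooth convergence of the blow\nobreakdash-down to the uniform\nobreakdash-in\nobreakdash-$a$ closeness statement on $B_R(0)$, using convexity to control the rescaled surfaces outside the region where convergence is a priori known.
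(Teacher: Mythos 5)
Your proof is correct and follows the route the paper intends: the corollary is stated as an immediate consequence of Lemma \ref{entropy limit lemma} and the preceding blow-down discussion, read off at the single time slice $t=-1$ (where $c_j^{-1}M_{-c_j^2}=a^{-1}(M-a^2\omega_n)$ with $a=c_j$) exactly as you do. The only content the paper leaves implicit is the exclusion of $k=0$ and $k=n$, which you supply by the standard density-rigidity and compactness/convexity arguments (the latter being the same observation the paper itself makes in the proof of the main theorem when it discards $k=n$).
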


    In the following we prove some canonical neighborhood Lemmas.
    \begin{Lemma}\label{canonical nbhd lemma}
      Let $M^n\subset\mathbb{R}^{n+1}$  to be a  noncollapsed, mean convex, uniformly 3-convex smooth translating soliton of mean curvature flow.\  \ $p\in M$ is a fixed point. Set $M_t$ to be the associated translating solution of the mean curvature flow. Suppose that one blow down limit of $M_t$ is $S^{n-2}\times\mathbb{R}^2$ and that the sub-level set $M_t\cap\{h(\cdot,t)\leq h_0\}$ is compact for any $h_0\in\mathbb{R}$.
      Then for any given  $L>10, \epsilon>0$,  there exist a constant $\Lambda$ with the following property:
      If $x\in M$ satisfies $|x-p|\geq\Lambda$, then after a parabolic rescaling by the factor  $H(x,t)$,
     the parabolic neighborhood $\hat{\mathcal{P}}(x,t,L,L^2)$ is $\epsilon$ close to the corresponding piece of the shrinking $S^{n-2}\times \mathbb{R}^2$, or the translating $\text{Bowl}^{n-1}\times \mathbb{R}$.

    \end{Lemma}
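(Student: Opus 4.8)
The plan is to argue by contradiction through a blow-up, following \cite{zhu2020so} and \cite{brendle2018uniqueness}. If the assertion fails there are $L>10$, $\epsilon>0$ and a sequence $x_j\in M$ with $|x_j-p|\to\infty$ such that, after parabolically rescaling the translating flow by $\lambda_j:=H(x_j)$ and moving $x_j$ to the origin — i.e. passing to $\hat M^j_s:=\lambda_j(M_{\lambda_j^{-2}s}-x_j)=\lambda_j(M-x_j)+\lambda_j^{-1}s\,\omega_n$, which has mean curvature $1$ at the origin at time $0$ — the neighborhood $\hat{\mathcal P}(0,0,L,L^2)$ is not $\epsilon$-close in $C^{10}$ to the corresponding piece of any shrinking $S^{n-2}\times\mathbb R^2$ nor of any translating $\mathrm{Bowl}^{n-1}\times\mathbb R$. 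Since $M$ is mean convex and noncollapsed it is convex (\cite{white2000size},\cite{white2003nature},\cite{huisken1999convexity},\cite{haslhofer2017mean}), and convexity, $\alpha$-noncollapsing and uniform $3$-convexity pass to the $\hat M^j$ with the same constants. By the global curvature estimate for noncollapsed mean convex flows together with $\hat H(0,0)=1$, the $\hat M^j$ have uniform $C^\infty$ bounds on every fixed parabolic ball; passing to a subsequence, $\hat M^j\to\hat M^\infty$ smoothly on compact subsets of space-time, with $\hat M^\infty$ a complete, convex, noncollapsed, uniformly $3$-convex ancient solution and $\hat H(0,0)=1$; in particular $\hat M^\infty$ is not a hyperplane.

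I would then split according to $\lambda_j=H(x_j)=\langle\omega_n,\nu(x_j)\rangle\in(0,1]$. If $\lambda_j\to 0$ along a subsequence, then $x_j$ lies where the intrinsic scale $H(x_j)^{-1}$ is large, and the blow-down description of $M$ — a blow-down limit is $S^{n-2}\times\mathbb R^2$, which by Lemma \ref{entropy limit lemma} is the only one and by Corollary \ref{blow down for translator} holds uniformly away from $p$ — forces $\hat M^\infty$ to be the shrinking cylinder $S^{n-2}\times\mathbb R^2$, contradicting the choice of $(x_j)$. So we may assume $\lambda_j\to\lambda_\infty\in(0,1]$; then each $\hat M^j$ satisfies the translator equation with velocity $\lambda_j^{-1}\omega_n$, so $\hat M^\infty$ is a translator with velocity $\lambda_\infty^{-1}\omega_n$, and by the strong maximum principle $\hat H>0$ on $\hat M^\infty$ (otherwise $\hat M^\infty$ is a minimal translator, hence a hyperplane).

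The crucial step is to show $\hat M^\infty$ contains a line. Here one uses convexity of $M$ and compactness of the sub-level sets $\{h(\cdot,t)\le h_0\}$ — which forces $h(x_j)\to\infty$ — together with the cylindrical blow-down, to see that the rescaled hypersurfaces contain line segments of length (in units of $\lambda_j^{-1}$) tending to infinity, so that $\hat M^\infty$ is translation invariant in at least one direction; for $n=3$ this is exactly what is carried out in \cite{zhu2020so}, and that argument uses only convexity and the structure of the blow-down, so it transfers. Granting this, Hamilton's strong maximum principle for the second fundamental form \cite{hamilton1986four} yields an isometric splitting $\hat M^\infty=\Sigma^{k}\times\mathbb R^{\,n-k}$ with $n-k\ge 1$ and $\Sigma^{k}$ a strictly convex translator. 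Uniform $3$-convexity together with $\hat H(0,0)=1$ forces $n-k\le 2$, and $n-k=2$ is excluded: then $\Sigma^{n-2}$ would be a uniformly convex, noncollapsed translator, hence — uniform convexity implying uniform two-convexity — the Bowl soliton by \cite{haslhofer2015uniqueness}, which is however not uniformly convex, a contradiction. Thus $n-k=1$, and $\Sigma^{n-1}\subset\mathbb R^{n}$ is a strictly convex, uniformly two-convex, noncollapsed translator, which is the Bowl soliton by \cite{haslhofer2015uniqueness} (see also \cite{brendle2018uniqueness}). Hence $\hat M^\infty=\mathrm{Bowl}^{n-1}\times\mathbb R$, and for $j$ large $\hat M^j$ is $\epsilon$-close to it on $\hat{\mathcal P}(0,0,L,L^2)$ — again a contradiction, which proves the lemma.

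I expect the main obstacle to be the line-splitting step: manufacturing a genuine translation symmetry of $\hat M^\infty$ out of the coarse information that one blow-down of $M$ is $S^{n-2}\times\mathbb R^2$ and that $M$ is convex with compact sub-level sets. The remaining points are comparatively routine, but the case split must be organized so that the stray models are ruled out — $S^{n-1}\times\mathbb R$ is a shrinker, hence cannot appear in the translator case $\lambda_\infty>0$, while $\mathrm{Bowl}^{n}$ is strictly convex, hence is excluded once $\hat M^\infty$ is shown to split off a line.
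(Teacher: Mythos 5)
Your skeleton (contradiction, parabolic rescaling at $x_j$, compactness of noncollapsed flows, splitting off a line, classifying the cross-section) matches the paper's, but two essential steps are missing or wrong. First, the line-splitting step, which you explicitly defer ("granting this\dots"), is not a consequence of the blow-down structure at all in the paper: one passes to a subsequence with $|x_{j+1}-p|\ge 2|x_j-p|$, notes that $-x_j/|x_j|$ converges to a direction $\Theta$, and uses elementary triangle geometry ($\angle x_jpx_{j+1}\to 0$ together with $\tfrac12|x_{j+1}-p|<|x_j-x_{j+1}|<|x_{j+1}-p|$, hence $\angle x_{j+1}x_jp\to\pi$) plus the long-range curvature estimate $H(x_j)|x_j-p|\to\infty$ to see that the rescaled convex bodies contain segments through the origin extending to infinity in both directions $\pm\Theta$. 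This uses only convexity and $|x_j-p|\to\infty$; it is the actual content of the step and should be written out rather than cited.

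Second, and more seriously, your case split on $\lambda_j=H(x_j)$ does not close. The claim that $\lambda_j\to 0$ forces $\hat M^\infty$ to be the shrinking cylinder is false: Corollary \ref{blow down for translator} controls $M$ only in a ball of radius $R\sqrt{h_j}$ about $h_j\omega_n$, while $x_j$ sits at distance up to $\sim\eta^{-1}h_j$ from the axis and hence escapes to infinity in the blow-down picture; moreover points in a Bowl$\times\mathbb R$ region also have $H\to 0$ while their local model is not the cylinder. Consequently, when $\lambda_j\to 0$ the limit is merely an ancient solution splitting as $\mathbb R\times M'_t$ with $M'_t$ uniformly two-convex, and $M'_t$ may a priori be \emph{compact} (the ancient oval) --- your translator argument ($\lambda_\infty>0$) excludes compact cross-sections, but that branch is exactly the one that fails. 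Ruling out the compact cross-section is the hardest part of the paper's proof: one first shows the split line must be the $x_n$-axis (otherwise some normal $\nu$ satisfies $\langle\nu,\omega_n\rangle<0$, contradicting $H=\langle\omega_n,\nu\rangle>0$ for the original translator), then compares the scale-invariant eccentricity $\mathrm{ecc}(N)=\mathrm{diam}(N)\sup H_N$ of the level sets $M\cap\{h=h_j\}$: these converge to the compact $M'_0$ (bounded eccentricity) but, by the blow-down to $S^{n-2}\times\mathbb R^2$ and the compactness of sub-level sets, they must be close to $S^{n-2}\times\mathbb R$ at scale $\sqrt{h_j}$ and hence have arbitrarily large eccentricity. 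Without some version of this argument your proof does not exclude $\mathbb R\times(\text{ancient oval})$ as a limit, and the lemma's conclusion would fail for the corresponding $x_j$.
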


    \begin{proof}
      If $M$ is not strictly mean convex, then by strong maximal principle $M$ must be flat plane, therefore all the blow down limit is flat plane, this is a contradiction.

      Without loss of generality we assume that $p$ is the origin, and that $\omega_n$ is the unit vector in $x_n$ axis which points to the positive part of $x_n$ axis.

    Argue by contradiction. Suppose that the conclusion is not true, then there exist a sequence of points $x_j\in M$ satisfying $|x_j-p|\geq j$ but $\hat{\mathcal{P}}(x_j,0,L,L^2)$ is not $\epsilon$ close to either one of the models after appropriate parabolic rescaling.

  By the long range curvature estimate (c.f. \cite{white2000size}, \cite{white2003nature}, \cite{haslhofer2017mean}),  $H(p)|x_j-p|\rightarrow \infty$ implies
  \begin{align}\label{long range curvature estimate}
    H(x_j)|x_j-p|\rightarrow \infty
  \end{align}

  After passing to a subsequence, we may assume
  \begin{align}\label{distancedoubling}
    |p-x_{j+1}|\geq 2|p-x_j|
  \end{align}

  Now let $M^{(j)}_t=H(x_j)\Big(M_{H^{-2}(x_j)t}-x_j\Big)$ be a sequence of rescaled solutions. Under this rescaling $M^{(j)}_t$ remains eternal, $x_j$ is sent to the origin and $H_{M_0^{(j)}}(0)=1$.
  By the global convergence theorem (c.f \cite{haslhofer2017mean})  $M^{(j)}_t$ converges to an ancient solution $M^{\infty}_t$ which is  smooth,  non-collapsed, weakly convex, uniformly 3-convex and $H_{M^{\infty}_0}(0)=1$. Thus $M^{\infty}_0$ is strictly mean convex.
  Denote by $K^{\infty}$ the convex domain bounded by $M^{\infty}$.

  Suppose that $p$ is sent to $p^{(j)}$ in the $j^{th}$ rescaling. Then  $p^{(j)}=-H(x_j)x_j$ and $|p^{(j)}|\rightarrow \infty$ by (\ref{long range curvature estimate}).
  After passing to a subsequence, we may assume that $\frac{p^{(j)}}{|p^{(j)}|}=-\frac{x_j}{|x_j|}\rightarrow \Theta \in S^{n}$.
  Suppose that $l$ is the line in the direction of $\Theta$, i.e. $l=\{s\Theta\ | \ s\in\mathbb{R}\}$.

  Since rescaling doesn't change angle, we have $\angle x_jpx_{j+1}\rightarrow 0$. \\

  By elementary triangle geometry and (\ref{distancedoubling}) we have
  \begin{align}\label{edgerelation1}
    \frac{1}{2}|x_{j+1}-p|<|x_{j+1}-p|-|x_j-p|<|x_j-x_{j+1}| < |x_{j+1}-p|
  \end{align}
  Consequently $\angle x_{j+1}x_jp)\leq \angle x_jpx_{j+1})\rightarrow 0$. Therefore $\angle x_{j+1}x_jp\rightarrow \pi$.

  This implies that the limit $M_0$ contains the  line $l$. By convexity the first principal curvature vanishes. Then strong maximal principle (c.f \cite{hamilton1986four}, \cite{haslhofer2017mean},  \cite{white2003nature})
  implies that $M_t^{\infty}$ split off a line. That is,
  $M ^{\infty}_t=\mathbb{R}\times M'_t$ where $\mathbb{R}$ is in the direction of $l$ and $M'_t$ is an ancient solution that is non-collapsed, strictly mean convex, uniformly 2-convex. \\

  \noindent\textit{Case 1: } $M'_t$ is noncompact

  By the classification result of Brendle and Choi \cite{brendle2018uniqueness} \cite{brendle2019uniqueness}, we conclude that, up to translation and rotation,  $M'_t$ is  the shrinking $S^1\times\mathbb{R}$, or the translating $\text{Bowl}^{2}$.
    This means that for large $j$, the parabolic neighborhood $\hat{\mathcal{P}}^{(j)}(0,0,L,L^2)$ is $\epsilon$ close to either $S^{n-2}\times\mathbb{R}^2$ or $\text{Bowl}^2\times\mathbb{R}$, a contradiction.\\

    \noindent\textit{Case 2: }  $M'_t$ is compact

     \noindent\textit{Claim: } $l$ is the $x_n$ axis, consequently $\frac{x_j}{|x_j|}$ converges to $\omega_n$.

    In fact, let's denote  by $l^{\perp}$ the plane perpendicular to $l$ that passes through the origin. (Note that $l$ also passed through the origin by definition).
    Hence $l^{\perp}\cap M_t^{\infty}$ is isometric to $M_t'$.
    Suppose that $l$ is not the $x_n$ axis, then there is a unit vector $v\perp l$ such that $\left<v,\omega_n\right><0$.
    Since $M_t^{\infty}$ splits off in the direction $l$ with cross section a closed surface $M_t'$, we can find a point $y_j'\in M_0^{\infty}\cap l^{\perp}$ such that the inward normal vector $\nu$ of $M_0^{\infty}$ at $y_j'$ is equal to $v$.
    Then for large $j$ there is a point $y_j\in M_0^{(j)}\cap l^{\perp}$ such that the unit inward normal $\nu_{M^{(j)}_0}(y_j)$ is sufficiently close to $v$.  But this implies that $H_{M^{(j)}_0}(y_j)=H_{M_0}(x_j)^{-1}\left<\nu_{M^{(j)}_0}(y_j),\omega_n\right><0$ a contradiction.

    By the above argument we know that the $\mathbb{R}$ direction is parallel to $\omega_n$.
    Hence the cross section perpendicular to the $\mathbb{R}$ factor are the level set of the height function.
    Moreover the cross section $M_0^{(j)}\cap \{h(\cdot,0)=0\}$  converge smoothly to the cross section $(\mathbb{R}\times M_0')\cap \{h(\cdot,0)=0\}$, which is $M_0'$.

     Now let's convert it to the unrescaled picture. Let $h_j=h(x_j,0)$ and $N_j=M\cap \{h(\cdot,0)=h_j\}$ ($N_j$ should be considered as  a hypersurface in $\mathbb{R}^n$). Then $N_j$, after appropriate rescaling, converges to $M_0'$.

    Define a scale invariant quantity for hypersurface(possibly with boundary):
    \begin{align*}
      \text{ecc}(N_j)=\text{diam}(N_j)\sup H_{N_j}
    \end{align*}
     where diam denotes the extrinsic diameter and $H_{N_j}$ is the mean curvature of $N_j$ in $\mathbb{R}^n$.
    Since $N_j\rightarrow M_0'$, we have:
    \begin{align}\label{ecc M0'}
      \text{ecc}(N_j)< 2\text{ecc}(M_0')
    \end{align}
    for all large $j$.

    On the other hand, by assumption each sub-level set $\{h(\cdot, 0)\leq h_0\}$ is compact, this means $h_j\rightarrow+\infty$.
    By Corollary \ref{blow down for translator}, for any $\eta>0, R\gg \sqrt{2n}$ there exists $A_j\in SO(4)$ for each sufficiently large $j$ such that
    $\sqrt{h_j}^{-1}(M_0-h_j\omega_n)\cap B_R(0)$ is $\eta$ close to $\Sigma_j:=A_j(S^{n-2}_{\sqrt{2(n-2)}}\times\mathbb{R})$, which is a rotation of $S^{n-2}_{\sqrt{2(n-2)}}\times\mathbb{R}$.
    Since $\left<\nu,\omega_3\right>>0$ on $\sqrt{h_j}^{-1}(M_0-h_j\omega_3)\cap B_R(0)$, by approximation $\left<\nu,\omega_3\right>>-C\eta$ on $\Sigma_j\cap B_R(0)$.

    Since $\nu$ is arbitrary and $R$ is large, we have $|\left<\nu,\omega_3\right>|<C\eta$ along the $S^{n-2}$ fiber of $\Sigma$.

    This means that the $\mathbb{R}^2$ factor of $\Sigma_j$ must be almost perpendicular to the level set of the height function $h$.
    Consequently, the intersection of $\Sigma_j\cap B_R(0)$ with $\{h(\cdot,0)=0\}$ must be $C\eta$ close to some rotated $(S^{n-2}_{\sqrt{2(n-2)}}\times\mathbb{R})\cap B_R(0)$,
    hence the mean curvature (computed in $\mathbb{R}^n$) is at least $\sqrt{\frac{n-2}{2}}-C\eta$.

    Now let $\tilde{N}_j=\sqrt{h_j}^{-1}(M_0-h_j\omega_3)\cap\{h(\cdot,0)=0\}\cap B_R(0)$. Then $\tilde{N}_j$, considered as surface in $\mathbb{R}^n$, is $C\eta$ close to $\Sigma_j\cap\{h(\cdot,0)=0\}\cap B_R(0)$. Therefore
    \begin{align}\label{ecc cylinder}
      \text{ecc}(\tilde{N}_j)\geq (\sqrt{\frac{n-2}{2}}-C\eta)R
    \end{align}

    Now we take $\eta<\frac{1}{8C}$, $R>4\text{ecc}(M_0')$ and $j$ large accordingly. Note that $\tilde{N}_j=\sqrt{h_j}^{-1}(N_j-h_j\omega_n)$, then the scale invariance of ecc together with (\ref{ecc M0'}) and (\ref{ecc cylinder})  lead to a contradiction.

    \end{proof}

    \begin{Lemma}\label{blowdownnecklemma}
      Suppose that $M^n\subset\mathbb{R}^{n+1}$ is  noncollapsed, convex, uniformly 3-convex translating soliton of mean curvature flow.  Set $M_t$ to be the associated translating solution. Suppose that one blow down limit of $M_t$ is $S^{n-1}\times\mathbb{R}$  and that the sub-level set $M_t\cap\{h(\cdot,t)\leq h_0\}$ is compact for any $h_0\in\mathbb{R}$. Then $M=\text{Bowl}^n$.
    \end{Lemma}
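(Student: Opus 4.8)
The plan is to promote the uniform $3$-convexity of $M$ to uniform $2$-convexity and then invoke the classification theorem of Brendle and Choi. I would begin with two reductions. First, $M$ is strictly mean convex: otherwise the strong maximum principle applied to $\partial_t H=\Delta H+|A|^2H$ forces $H\equiv 0$, so $M$ is a hyperplane and its blow-down is a hyperplane, contradicting the hypothesis. Second, $M$ is strictly convex: if $\lambda_1$ vanishes somewhere, Hamilton's strong maximum principle (\cite{hamilton1986four}) applied to the eternal flow $M_t$ forces $M$ to split off a line, $M=\mathbb{R}\ell\times M'$; since $M$ translates in direction $\omega_n$, either $\ell\parallel\omega_n$ (then $H\equiv 0$, again contradicting the blow-down), or $M'$ is a convex, noncollapsed, uniformly $2$-convex translator in $\mathbb{R}^n$ and hence $M'=\text{Bowl}^{n-1}$ by \cite{haslhofer2015uniqueness} (translators are noncompact); but then the blow-down of $M=\mathbb{R}\times\text{Bowl}^{n-1}$ is $S^{n-2}\times\mathbb{R}^2$, not $S^{n-1}\times\mathbb{R}$, a contradiction.

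The heart of the proof is to show $M$ is uniformly $2$-convex. Suppose not: there is a sequence $x_j\in M$ at which $(\lambda_1+\lambda_2)/H\to 0$. If $\{x_j\}$ has a bounded subsequence converging to some $x_\infty\in M$, then $(\lambda_1+\lambda_2)/H(x_\infty)=0$, and Hamilton's strong maximum principle forces $M$ to split off an $\mathbb{R}^2$ factor; then the blow-down of $M$ splits off $\mathbb{R}^2$ and cannot equal $S^{n-1}\times\mathbb{R}$, a contradiction. Hence $x_j\to\infty$. By the long-range curvature estimate $H(x_j)|x_j-p|\to\infty$, so the rescaled eternal flows $M^{(j)}_t=H(x_j)\bigl(M_{H(x_j)^{-2}t}-x_j\bigr)$ satisfy $H_{M^{(j)}_0}(0)=1$, and by the global convergence theorem (cf.\ \cite{haslhofer2017mean}) a subsequence converges to an eternal solution $M^\infty_t$ that is convex, noncollapsed and uniformly $3$-convex, with $H(0)=1$ and — since $(\lambda_1+\lambda_2)/H$ is scale invariant — $(\lambda_1+\lambda_2)/H(0)=0$ on $M^\infty$. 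Applying the strong maximum principle once more, $M^\infty$ splits off $\mathbb{R}^2$: $M^\infty=\mathbb{R}^2\times M''$, and uniform $3$-convexity of $M^\infty$ forces $M''$ to be uniformly convex. By the classification of convex ancient solutions — \cite{angenent2020uniqueness} in the compact case (the ancient oval is not uniformly convex) and \cite{brendle2018uniqueness} in the noncompact case, together with the observation that a noncompact convex ancient solution cannot have a round compact blow-down — $M''$ must be the shrinking round sphere $S^{n-2}$, so $M^\infty=S^{n-2}\times\mathbb{R}^2$.

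To close, I would use Huisken's monotonicity formula and Lemma \ref{entropy limit lemma}. The Gaussian density is invariant under rigid motions and dilations and lower semicontinuous under the smooth convergence $M^{(j)}\to M^\infty$ (with uniform local area bounds by convexity), so $\lambda(M^\infty)\le\lambda(M)$; by monotonicity, for every $(x_0,t_0)$ one has $\Theta_{x_0,t_0}(M_0)\le\lim_{t\to-\infty}\Theta_{x_0,t_0-t}(M_t)=\Theta_{0,1}(S^{n-1}\times\mathbb{R})$ by Lemma \ref{entropy limit lemma}, hence $\lambda(M)\le\Theta_{0,1}(S^{n-1}\times\mathbb{R})$. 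On the other hand $\lambda(M^\infty)=\Theta_{0,1}(S^{n-2}\times\mathbb{R}^2)$. Since the cylindrical densities $\Theta_{0,1}(S^k\times\mathbb{R}^{n-k})$ are strictly decreasing in $k$ (Stone's computation; recall from the proof of Lemma \ref{entropy limit lemma} that they are in any case pairwise distinct), $\Theta_{0,1}(S^{n-2}\times\mathbb{R}^2)>\Theta_{0,1}(S^{n-1}\times\mathbb{R})$, a contradiction. Therefore $M_t$ is a strictly convex, noncollapsed, uniformly $2$-convex, noncompact ancient solution, and \cite{brendle2018uniqueness}, \cite{brendle2019uniqueness} give $M=\text{Bowl}^n$.

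The step I expect to be the main obstacle is the analysis of the rescaled limit $M^\infty$ when $x_j\to\infty$: guaranteeing the rescaling is nondegenerate, identifying $M^\infty$ precisely as the shrinking cylinder $S^{n-2}\times\mathbb{R}^2$ (ruling out noncompact or non-round cross sections), and then extracting the entropy contradiction — this is where the classification results and the ordering of the cylindrical Gaussian densities enter essentially. An alternative route, closer in spirit to Lemma \ref{canonical nbhd lemma}, would be to first establish a "neck canonical neighborhood" statement — every point of $M$ far from $p$ lies at the center of a long piece of a round $S^{n-1}\times\mathbb{R}$ — and to deduce uniform $2$-convexity from $(\lambda_1+\lambda_2)/H\approx\tfrac{1}{n-1}$ on such necks together with strict convexity on the compact core; the proof of that statement runs along the same lines and meets the same obstacle.
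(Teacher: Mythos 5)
Your proposal is correct, but the mechanism you use to upgrade uniform $3$-convexity to uniform $2$-convexity is genuinely different from the paper's. The paper argues directly from Corollary \ref{blow down for translator}: since the blow-down is $S^{n-1}\times\mathbb{R}$ and $\left<\nu,\omega_n\right>>0$ forces the $\mathbb{R}$-factor to align with the $x_n$-axis, every region $\{h(\cdot,0)\geq A\}$ is modeled on round necks $S^{n-1}\times\mathbb{R}$, on which $(\lambda_1+\lambda_2)/H\approx\frac{1}{n-1}$; the compact sub-level set $\{h\leq A\}$ together with strict convexity supplies the lower bound on the core, and uniform $2$-convexity follows. This is exactly the ``alternative route'' you sketch in your last paragraph, and it is where the compact-sub-level-set hypothesis is actually consumed. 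Your main argument instead runs the Haslhofer--Kleiner/Brendle--Choi template: point-picking a sequence with $(\lambda_1+\lambda_2)/H\to 0$, passing to a rescaled limit, splitting off $\mathbb{R}^2$ by the strong maximum principle, and deriving a contradiction from Huisken monotonicity plus the strict ordering $\Theta_{0,1}(S^{n-2}\times\mathbb{R}^2)>\Theta_{0,1}(S^{n-1}\times\mathbb{R})$, using Lemma \ref{entropy limit lemma} to cap $\lambda(M)$ by the blow-down density. This costs more machinery but is robust, and it buys you independence from the sub-level-set hypothesis in this step. One simplification worth noting: you do not need to identify the cross-section $M''$ as the round sphere (which drags in \cite{angenent2020uniqueness} and a case analysis); since $M^\infty$ splits off $\mathbb{R}^2$ and is uniformly $3$-convex, its blow-down is forced to be $S^{n-2}\times\mathbb{R}^2$, and $\lambda(M^\infty)\geq\Theta_{0,1}(S^{n-2}\times\mathbb{R}^2)$ already yields the contradiction. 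Your preliminary reductions and the final appeal to \cite{haslhofer2015uniqueness}, \cite{brendle2018uniqueness} match the paper.
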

    \begin{proof}
    We use $M_t$ to denote the associated translating solution of the mean curvature flow.
    If $M$ is not strictly convex, then it split off a line. Since one of the blow down limit is $S^{n-1}\times\mathbb{R}$, it must be shrinking cylinders, a contradiction.
    Fixing  $p\in M$.
    Assume without loss of generality  that $p$ is the origin and  $\omega_n$ is the unit vector in $x_n$ axis which points to the positive part of $x_n$ axis.

    Using the the argument from the Case 2 in Lemma \ref{canonical nbhd lemma}, the ball $B_{100\sqrt{a}}(a\omega_3)\cap M$ is close a $S^{n-1}_{\sqrt{2(n-1)}}\times\mathbb{R} $ with some translation and rotation for all large $a$.
    Moreover, the fact that $\left<\nu, \omega_n\right> >0$ and approximation implies that the $\mathbb{R}$  direction is almost parallel to the $x_n$ axis, and the cross section $\{h(\cdot,0)=a\}$ is close to  $S^{n-1}$ after some scaling for sufficiently large $a$.
    In particular $M\cap \{h(\cdot,0)\geq A\}$ is uniformly 2-convex for a fixed large $A$.
   Since the sub-level set $\{h(\cdot, 0)\leq A\}$ is compact and $M$ is strictly convex, we know that $M$ is uniformly 2-convex.

    Therefore $M$ is noncompact and strictly convex, the classification result (c.f \cite{haslhofer2015uniqueness}, \cite{brendle2018uniqueness})applies and $M=\text{Bowl}^n$.

    \end{proof}

\begin{proof}[Proof of Theorem \ref{Main Theorem}]

      Let $M_t=M+t\omega_n$ be the associated mean curvature flow. Denote by $g$ the metric on $M$ and $g_t$ the metric on $M_t$.

      By the convexity estimate (c.f \cite{haslhofer2017mean}),  $M$ must be convex.
      If $M$ is not strictly convex, then by maximal principle \cite{hamilton1986four} it must split off a line, namely it is a product of a line and a convex, uniformly 2-convex, non-collapsed tranlating solution (which must be noncompact). By the classification result \cite{brendle2019uniqueness} \cite{brendle2018uniqueness} or \cite{haslhofer2015uniqueness}, $M=$Bowl$^{n-1}\times\mathbb{R}$  and we are done.

      We may then assume that $M$ (or $M_t$) is strictly convex. It suffices to find a normalized rotation vector field that is tangential to $M$.

      Suppose that $M_t$ bounds the open domain $K_t=K+t\omega_n$ and attains maximal mean curvature at the tip $p_t=p+t\omega_n$.
      After some rotation and  translation we assume that $p$ is the origin and $\omega_n$ is the unit vector in $x_n$ axis which points to the positive part of $x_n$ axis.

      As in  \cite{haslhofer2015uniqueness}, we can show that $\omega_n^T=0$ at tip $p$ : first take the gradient of (\ref{translatoreqn1}) at the point $p$:
\begin{align*}
  \nabla H &= A(\omega_n^{T})
\end{align*}
where $A$ is the shape operator, which is non-degenerate because of the strict convexity.
Since $\nabla H=A(\omega_n^T)=0$ at $p$, we have $\omega_n^T=0$, moreover $\vec{H}=\omega_n$. In particular $\nu(p)=\omega_n$ and $H(p)=1$, thus $p_t$ is indeed the trajectory that moves by mean curvature.

      By the convexity, $K$ is contained  in the upper half space $\{x_n\geq 0\}$. The strict convexity implies that there is a cone $\mathcal{C}_{\eta}=\{x\in\mathbb{R}^{n+1}| x_n\geq \eta\sqrt{x_1^2+...+x_{n-1}^2+x_{n+1}^2}\}$ ($0<\eta<1$) such that
      \begin{align}\label{m contained in cone}
        K_t\backslash B_{1}(p)\subset \mathcal{C}_{\eta}\backslash B_{1}(p)
      \end{align}

      The height function  $h(x,t)=\left<x,\omega_n\right>-t$ \ then measures the signed distance from the support plane $\{x_n=t\}$ at the tip.
      (\ref{m contained in cone}) implies that any sub-level set $\{h(\cdot, 0)\leq h_0 \}$ is compact.

       By Corollary \ref{blow down for translator}, a blow down limit of $M_t$ exists and is uniformly 3-convex, thus must be  $S^{k}_{\sqrt{-2kt}}\times\mathbb{R}^{n-k}$ up to rotation,  for a fixed $k=n-2,n-1,n$. If $k=n$, then $M$ is compact, thus can't be translator.
      If $k=n-1$, by Lemma \ref{blowdownnecklemma}, $M$ is Bowl$^n$, and the result follows immediately.
      So from now on we assume that  $k=n-2$ and Lemma \ref{canonical nbhd lemma} is applicable.

      By Lemma \ref{canonical nbhd lemma}, there exists $\Lambda'$ depending on $L_1,\epsilon_1$ given in the Theorem \ref{bowl x R improvement} such that if $(x,t)\in M_t$ satisfies $h(x,t)\geq \Lambda'$, then $\hat{\mathcal{P}}(x,t,L_1,L_1^2)$ is $\epsilon_1$ close to either a piece of translating Bowl$^{n-1}\times\mathbb{R}$ or a family of shrinking cylinder $S^{n-2}\times\mathbb{R}^2$ after rescaling.

      By the equation (\ref{translatoreqn1}) $\partial_t h(x,t)=\left<\nu,\omega_n\right>-1\leq 0$, so $h$ is nonincreasing.

      Using (\ref{m contained in cone}) we have $h(x,t)\geq \frac{\eta}{2}|x-p_t|$ whenever $|x-p_t|\geq 1$. Also $h(x,t)\leq|x-p_t|$.

     By the long range curvature estimate (c.f. \cite{white2000size} \cite{white2003nature}, \cite{haslhofer2017mean}) there exists  $\Lambda>\Lambda'$ such that
     $H(x,t)|x-p_t|>2\cdot10^3\eta^{-1}L_1$ for all $(x,t)$ satisfying $h(x,t)>\Lambda$. Consequently $H(x,t)h(x,t)>10^3L_1$.

     Define

     $\Omega_j=\{(x,t)\ |\ x\in M_t,\ t\in[-2^{\frac{j}{100}}, 0],\ h(x,t)\leq2^{\frac{j}{100}}\Lambda \}$

     $\partial^{1} \Omega_j = \{(x,t)\ |\ x\in M_t,\ t\in[-2^{\frac{j}{100}}, 0],\ h(x,t)=2^{\frac{j}{100}}\Lambda \}$

     $\partial^{2} \Omega_j = \{(x,t)\in\partial\Omega_j \ |\ t=-2^{\frac{j}{100}} \}$

      \noindent\textbf{Step 1:} If $x\in M_t$ satisfies $h(x,t)\geq 2^{\frac{j}{100}}\Lambda$, then $(x,t)$ is $2^{-j}\epsilon_1$ symmetric.

      When $j=0$ the statement is true by the choice of $\Lambda', \Lambda$. If the statement is true for $j-1$. Given $x\in M_t$ satisfying $h(x,t)\geq 2^{\frac{j}{100}}\Lambda$, the choice of $\Lambda$ ensures that
      $L_1H(x,t)^{-1}<10^{-3}h(x,t)$. So every point in the geodesic ball $B_{g_t}(x,L_1H(x)^{-1})$ has height at least $(1-10^{-3})h(x,t)\geq 2^{\frac{j-1}{100}}\Lambda$. Moreover, the height function is nonincreasing in time $t$ (equivalently nondecreasing backward in time $t$) so we conclude that  the parabolic neighborhood $\hat{\mathcal{P}}(x,t,L_1,L_1^2)$ is contained in the set $\{(x,t) | x\in M_t, h(x,t)\geq 2^{\frac{j-1}{100}}\Lambda\}$. In particular every point in
      $\hat{\mathcal{P}}(x,t,L_1,L_1^2)$ is $2^{-j+1}\epsilon_1$ symmetric by induction hypothesis.
      Now we can apply either Lemma \ref{Cylindrical improvement} or Lemma \ref{bowl x R improvement} to obtain that $(x,t)$ is $2^{-j}\epsilon$ symmetric.\\

      \noindent\textbf{Step 2:} The intrinsic diameter of the set $M_t\cap\{h(x,t)=a\}$ is bounded by $6\eta^{-1}a$ for $a\geq 1$.

      It suffices to consider $t=0$. Since $M$ is convex, the level set $\{h(x,t)=a\}$ is also convex. By (\ref{m contained in cone}), $M \cap\{h(x,0)=a\}$ is contained in a 3 dimensional ball of radius $\eta^{-1}a$, thus has extrinsic diameter at most $2\eta^{-1}a$. The intrinsic diameter of a convex set is bounded by triple of the extrinsic diameter, the assertion then follows immediately. \newline

      \noindent\textbf{Step 3:} For each $j$, by repeatedly applying Lemma \ref{Vector Field closeness on Cylinder} and Lemma \ref{VF close Bowl times R}, there eixsts a single normalized set of rotation vector fields $\mathcal{K}^{(j)} = \{K^{(j)}_{\alpha}, 1\leq\alpha\leq \frac{(n-2)(n-1)}{2}\}$ satisfying $\max_{\alpha}|\left<K^{(j)}_{\alpha},\nu\right>|H\leq 2^{-\frac{j}{2}}C\epsilon_1$ on $\partial^{1} \Omega_j$

      For each $\alpha$, Define the function $f^{(j)}$ on $\Omega_j$ to be
      \begin{align}\label{f in the final step}
        f^{(j)}(x,t)=\exp (2^{\lambda_j t})\frac{\left<K_{\alpha}^{(j)},\nu\right>}{H-c_j}
      \end{align}
      where $\lambda_j=2^{-\frac{j}{50}}, c_j=2^{-\frac{j}{100}}$.

      As in \cite{brendle2019uniqueness} or (\ref{EQNforf}), we have
      \begin{align*}
        (\partial_t -\Delta )f^{(j)}
       =&\left(\lambda_j-\frac{c_j|A|^2}{H-c_j} \right)f^{(j)}+2\left<\nabla f^{(j)},\frac{\nabla H}{H-c_j}\right>
      \end{align*}

      Since $H\geq 10^{3}h^{-1}$ whenever $h>\Lambda$, we have $H>n\cdot 2^{-\frac{j}{100}}=nc_j$ in $\Omega_j$ for all large $j$.

      Hence
      \begin{align*}
        \lambda_j-\frac{c_j|A|^2}{H-c_j}\leq \lambda_j-\frac{c_jH^2}{n(H-H/2)}
        \leq \lambda_j-2c_j^2<0
      \end{align*}

      By the maximal principle,
      \begin{align*}
        \sup\limits_{\Omega_j}|f^{(j)}|\leq & \sup\limits_{\partial\Omega_j}|f^{(j)}|=
        \max\left\{\sup\limits_{\partial^1\Omega_j}|f^{(j)}|,\sup\limits_{\partial^2\Omega_j}|f^{(j)}|\right\}
      \end{align*}

      Since
      \begin{align*}
        \sup\limits_{\partial^1\Omega_j}|f^{(j)}|\leq \frac{|\left<K^{(j)},\nu\right>H|}{(H-c_j)H}\leq C\frac{2^{-\frac{j}{2}}\epsilon_1}{2c_j^2}\leq 2^{-\frac{j}{4}}C\epsilon_1
      \end{align*}

      Meanwhile,  $|\left<K^{(j)},\nu\right>|\leq C2^{\frac{j}{50}}$ in $\Omega_j$. Therefore, for large $j$:
      \begin{align*}
        \sup\limits_{\partial^2\Omega_j}|f^{(j)}|\leq \exp(-2^{\frac{j}{50}})\frac{|\left<K^{(j)},\nu\right>|}{c_j}\leq 2^{-j}
      \end{align*}

      Putting them together we get $|f^{(j)}|\leq 2^{-j/4}$ in $\Omega_j$ . Since this is true for each $\alpha$, we know that the axis of $\mathcal{K}^{(j)}$ (i.e the 0 set of all $K_{\alpha}^{(j)}$) has a uniform bounded distance from $p$ or equivalently  $\max_{\alpha}|K_{\alpha}^{(j)}(0)|\leq C$.
      (If this is not the case, then passing to subsequence we may assume that $\max_{\alpha}|K_{\alpha}^{(j)}(0)|\rightarrow \infty$, then at least one of
      $\tilde{K}_{\alpha}^{(j)}=\frac{K_{\alpha}^{(j)}}{\max_{\alpha}|K_{\alpha}^{(j)}(0)|}$ converges locally to nonzero constant vector field $\tilde{K}_{\alpha}$ which is tangential to $M_0$ near $0$, hence $M_0$ is not strictly convex, a contradiction.)

      Passing to a subsequence and taking limit, there exists normalized set of rotation vector fields $\mathcal{K}$ which is tangential to $M$. This completes the proof.
\end{proof}

\newpage
    \section{Appendix }

    \subsection{ODE for Bowl soliton}\label{ode of bowl}
    We will set the origin to be the tip of the Bowl soliton and the $x_{n+1}$ to be the translating axis.
    Use the parametrization: $\varphi:\mathbb{R}^n\rightarrow\text{Bowl}^n$:
    $\varphi(x)=(x,h(x))$ where $h(x)=\varphi(|x|)$ is a one variable function satisfying the ODE:
    \begin{align*}
      \frac{\varphi''}{1+\varphi'^2}+\frac{(n-1)\varphi'}{r}=1
    \end{align*}
    with initial condition $\varphi(0)=\varphi'(0)=0$.

    Since $\varphi''\geq0$, we have the inequality:
    \begin{align*}
      \varphi''+\frac{(n-1)\varphi'}{r}\geq 1
    \end{align*}
    Using the integrating factor, when $r>0$:
    \begin{align*}
      (r^{n-1}\varphi')'=r^{n-1}\left(\varphi''+\frac{(n-1)\varphi'}{r}\right)\geq r^{n-1}
    \end{align*}
    Integrating from $0$ to $r$ we obtain $\varphi'\geq \frac{r}{n}$ and $\varphi \geq \frac{r^2}{2n}$

    The mean curvature at $(x,h(x))$ is given by
    \begin{align*}
      \frac{1}{\sqrt{1+\varphi'(|x|)^2}}\leq \frac{1}{\sqrt{1+\frac{|x|^2}{n^2}}}<\frac{n}{|x|}
    \end{align*}

    As an application, if $J_{\alpha}$ is the antisymmetric matrix in $so(n-1)\subset so(n+1)$ with Tr$(J_{\alpha}J_{\alpha}^T)$=1 (i.e the unit vector in $so(n-1)\subset so(n+1)$),  Then $|J|H<2n$ on the Bowl soliton.

    \subsection{Heat Kernel estimate}\label{Appendix HKEST}
    The purpose is to justify (\ref{hkest2}). Let's use a more general notation, that is, we replace $\frac{L_0}{4}$ by $L$. Denote
       \begin{align*}
         &D(x,y,k_1,k_2,\delta_1,\delta_2)=
       \left|(x_1,x_2)-(\delta_1 y_1, \delta_2 y_2)-(1-\delta_1,1-\delta_2)L+(4k_1,4k_2)L\right|
       \end{align*}

        Then
        \begin{align*}
          K_t&(x,y)=-\frac{1}{4\pi t}\sum_{\delta_i\in \{\pm 1\}, k_i\in \mathbb{Z}}(-1)^{-(\delta_1+\delta_2)/2}\cdot e^{-\frac{D^2}{4t}}
        \end{align*}

       We observe that, when $x\in\Omega_{L/25}$ and $y\in\partial\Omega_{L}$,
       $$|D|\geq \frac{|k_1|+|k_2|+1}{2}L$$.

       Then $|\partial_{\nu_{y}}e^{-D^2/4t}|\leq \frac{|D|}{2t}
       e^{-D^2/4t}\leq C\frac{(|k_1|+|k_2|+1)L}{t}\exp\left(-\frac{(|k_1|+|k_2|+1)^2L^2}{16t}\right)$.
       The last inequality holds because the function $\lambda e^{-\lambda}$ is decreasing for $\lambda>1$.
       Consequently,
       \begin{align*}
         |\partial_{\nu_y} K_t(x,y)|&\leq \sum_{n=0}^{\infty}\sum_{|k_1|+|k_2|=n, \delta_i\in\{\pm1\}}\frac{1}{4t}|\partial_{\nu_{y}}e^{-\frac{D^2}{4t}}|\\
         &\leq C\sum_{n=0}^{\infty}\sum_{|k_1|+|k_2|=n} \frac{(|k_1|+|k_2|+1)L}{t^2}\exp\left(-\frac{(|k_1|+|k_2|+1)^2L^2}{16t}\right)\\
          & \leq C\sum_{n=1}^{\infty}\frac{n^2L}{t^2}\exp\left(\frac{-n^2L^2}{50t}\right)\exp\left(\frac{-n^2L^2}{50t}\right)\\
         &\leq C\sum_{n=1}^{\infty}\frac{n^2L}{t^2}\frac{2(50t)^2}{(n^2L^2)^2}e^{\frac{-L^2}{50t}} \leq \frac{C}{L^3}e^{\frac{-L^2}{50t}}
       \end{align*}
       where the last inequality used the fact that $e^{-s}< \frac{2}{s^2}$ when $s>0$. Integrating along the boundary and replacing $t$ by $t-\tau$ we get \begin{align*}
         \int_{\partial{\Omega_{L}}}|\partial_{\nu_y}K_{t-\tau}(x,y)|dy\leq \frac{C}{L^2}e^{\frac{-L^2}{50t}}\leq \frac{CL^2}{(t-\tau)^2}e^{-\frac{L^2}{50(t-\tau)}}
       \end{align*}
       The last inequality is because $t-\tau<L^2$.
       Putting $L_0/4$ in place of $L$ and then we get (\ref{hkest2}).

       \subsection{Intrinsic and extrinsic diameter of a convex hypersurface}\label{intrisic diam extrinsic diam appendix}

       Give a compact, convex set $K\subset\mathbb{R}^n$, the boundary $M=\partial K$. The intrinsic diameter of $M$ is
       \begin{align*}
         d_1(M)=\sup\limits_{x,y\in M}\inf\limits_{\substack{\gamma(0)=x, \gamma(1)=y\\ \gamma \text{ continous }}} L(\gamma)
       \end{align*}
       The extrinsic diameter of $M$ is
       \begin{align*}
         d_2(M)=\sup\limits_{x,y\in M}|x-y|
       \end{align*}
       We show that $d_1(M)\leq 3d_2(M)$.

       Given a two dimensional plane $P$ whose intersection with $M$ contains at least two points. $P\cap K$ is convex.  Let's restriction our attention to $P$.
       Take $x,y\in P\cap M$ that attains $d_2(P\cap M)$.  Without loss of generality we may assume that $P=\mathbb{R}^2$, $x=(-1,0), y=(1,0)$. Thus $d_2(P\cap M)=2$.

       $P\cap K$ is contained in the rectangle
       $R=\{(x,y)\in\mathbb{R}^2\ | \ |x|\leq 1, |y|\leq 2\}$ by the choice of $x,y$.
       Let $R^{\pm}$ be the upper/lower half of this rectangle, respectively.
       By convexity we see that $M\cap P\cap R^{+}$ is a graph of a concave function on $[-1,1]$, thus the length is bounded by half of the perimeter of $P$, which is $6$. In the same way $M\cap P\cap R^{-}$ has length at most $6$.

       Now let $x',y'$ attains $d_1(M)$. Then we find some $P$ passing through $x',y'$. The above argument shows that there exists a curve $\gamma$ connecting $x',y'$ with $L(\gamma)\leq 3d_2(M\cap P)$.
       Then we have:
       \begin{align*}
         d_1(M)\leq L(\gamma)\leq 3 d_2(M\cap P)\leq 3d_2(M)
       \end{align*}

\newpage

\nocite{*}
\bibliography{BIBROTATIONSYMHIGHDIM}

\providecommand{\bysame}{\leavevmode\hbox to3em{\hrulefill}\thinspace}
\providecommand{\MR}{\relax\ifhmode\unskip\space\fi MR }
\providecommand{\MRhref}[2]{%
  \href{http://www.ams.org/mathscinet-getitem?mr=#1}{#2}
}
\providecommand{\href}[2]{#2}
\begin{thebibliography}{HIMW19}

\bibitem[ADS19]{angenent2019unique}
Sigurd Angenent, Panagiota Daskalopoulos, and Natasa Sesum, \emph{Unique
  asymptotics of ancient convex mean curvature flow solutions}, J. Differential
  Geom. \textbf{111} (2019), no.~3, 381--455. \MR{3934596}

\bibitem[ADS20]{angenent2020uniqueness}
\bysame, \emph{Uniqueness of two-convex closed ancient solutions to the mean
  curvature flow}, Ann. of Math. (2) \textbf{192} (2020), no.~2, 353--436.
  \MR{4151080}

\bibitem[And12]{andrews2012noncollapsing}
Ben Andrews, \emph{Noncollapsing in mean-convex mean curvature flow}, Geom.
  Topol. \textbf{16} (2012), no.~3, 1413--1418. \MR{2967056}

\bibitem[BC18]{brendle2018uniqueness}
Simon Brendle and Kyeongsu Choi, \emph{Uniqueness of convex ancient solutions
  to mean curvature flow in higher dimensions}, arXiv:1804.00018 (2018).

\bibitem[BC19]{brendle2019uniqueness}
\bysame, \emph{Uniqueness of convex ancient solutions to mean curvature flow in
  {$\Bbb R^3$}}, Invent. Math. \textbf{217} (2019), no.~1, 35--76. \MR{3958790}

\bibitem[BLT20]{bourni2020existence}
Theodora Bourni, Mat Langford, and Giuseppe Tinaglia, \emph{On the existence of
  translating solutions of mean curvature flow in slab regions}, Anal. PDE
  \textbf{13} (2020), no.~4, 1051--1072. \MR{4109899}

\bibitem[Bre13]{brendle2013rotational}
Simon Brendle, \emph{Rotational symmetry of self-similar solutions to the
  {R}icci flow}, Invent. Math. \textbf{194} (2013), no.~3, 731--764.
  \MR{3127066}

\bibitem[Bre15]{brendle2015sharp}
\bysame, \emph{A sharp bound for the inscribed radius under mean curvature
  flow}, Invent. Math. \textbf{202} (2015), no.~1, 217--237. \MR{3402798}

\bibitem[CM12]{colding2012generic}
Tobias~H. Colding and William~P. Minicozzi, II, \emph{Generic mean curvature
  flow {I}: generic singularities}, Ann. of Math. (2) \textbf{175} (2012),
  no.~2, 755--833. \MR{2993752}

\bibitem[CM15]{colding2015uniqueness}
Tobias~Holck Colding and William~P. Minicozzi, II, \emph{Uniqueness of blowups
  and \l ojasiewicz inequalities}, Ann. of Math. (2) \textbf{182} (2015),
  no.~1, 221--285. \MR{3374960}

\bibitem[DHS10]{daskalopoulos2008classification}
Panagiota Daskalopoulos, Richard Hamilton, and Natasa Sesum,
  \emph{Classification of compact ancient solutions to the curve shortening
  flow}, J. Differential Geom. \textbf{84} (2010), no.~3, 455--464.
  \MR{2669361}

\bibitem[Ham86]{hamilton1986four}
Richard~S. Hamilton, \emph{Four-manifolds with positive curvature operator}, J.
  Differential Geom. \textbf{24} (1986), no.~2, 153--179. \MR{862046}

\bibitem[Ham95a]{hamilton1993formations}
\bysame, \emph{The formation of singularities in the {R}icci flow}, Surveys in
  differential geometry, {V}ol. {II} ({C}ambridge, {MA}, 1993), Int. Press,
  Cambridge, MA, 1995, pp.~7--136. \MR{1375255}

\bibitem[Ham95b]{hamilton1995harnack}
\bysame, \emph{Harnack estimate for the mean curvature flow}, J. Differential
  Geom. \textbf{41} (1995), no.~1, 215--226. \MR{1316556}

\bibitem[Has15]{haslhofer2015uniqueness}
Robert Haslhofer, \emph{Uniqueness of the bowl soliton}, Geom. Topol.
  \textbf{19} (2015), no.~4, 2393--2406. \MR{3375531}

\bibitem[HH16]{haslhofer2013ancient}
Robert Haslhofer and Or~Hershkovits, \emph{Ancient solutions of the mean
  curvature flow}, Comm. Anal. Geom. \textbf{24} (2016), no.~3, 593--604.
  \MR{3521319}

\bibitem[HIMW19]{Hoffman2019}
D.~Hoffman, T.~Ilmanen, F.~Mart\'{\i}n, and B.~White, \emph{Graphical
  translators for mean curvature flow}, Calc. Var. Partial Differential
  Equations \textbf{58} (2019), no.~4, Paper No. 117, 29. \MR{3962912}

\bibitem[HK17a]{haslhofer2017mean}
Robert Haslhofer and Bruce Kleiner, \emph{Mean curvature flow of mean convex
  hypersurfaces}, Comm. Pure Appl. Math. \textbf{70} (2017), no.~3, 511--546.
  \MR{3602529}

\bibitem[HK17b]{haslhofer2017}
\bysame, \emph{Mean curvature flow with surgery}, Duke Math. J. \textbf{166}
  (2017), no.~9, 1591--1626.

\bibitem[HS99]{huisken1999convexity}
Gerhard Huisken and Carlo Sinestrari, \emph{Convexity estimates for mean
  curvature flow and singularities of mean convex surfaces}, Acta Math.
  \textbf{183} (1999), no.~1, 45--70. \MR{1719551}

\bibitem[HS09]{huisken2009mean}
\bysame, \emph{Mean curvature flow with surgeries of two-convex hypersurfaces},
  Invent. Math. \textbf{175} (2009), no.~1, 137--221. \MR{2461428}

\bibitem[Hui84]{huisken1984flow}
Gerhard Huisken, \emph{Flow by mean curvature of convex surfaces into spheres},
  J. Differential Geom. \textbf{20} (1984), no.~1, 237--266. \MR{772132}

\bibitem[Hui90]{huisken1990asymptotic}
\bysame, \emph{Asymptotic behavior for singularities of the mean curvature
  flow}, J. Differential Geom. \textbf{31} (1990), no.~1, 285--299.
  \MR{1030675}

\bibitem[Hui93]{huisken54local}
\bysame, \emph{Local and global behaviour of hypersurfaces moving by mean
  curvature}, Differential geometry: partial differential equations on
  manifolds ({L}os {A}ngeles, {CA}, 1990), Proc. Sympos. Pure Math., vol.~54,
  Amer. Math. Soc., Providence, RI, 1993, pp.~175--191. \MR{1216584}

\bibitem[KMl14]{kleene2014self}
Stephen Kleene and Niels~Martin M\o~ller, \emph{Self-shrinkers with a
  rotational symmetry}, Trans. Amer. Math. Soc. \textbf{366} (2014), no.~8,
  3943--3963. \MR{3206448}

\bibitem[Per02]{perelman2002entropy}
Grisha Perelman, \emph{The entropy formula for the ricci flow and its geometric
  applications}, arXiv:0211159 (2002).

\bibitem[SW09]{sheng2009singularity}
Weimin Sheng and Xu-Jia Wang, \emph{Singularity profile in the mean curvature
  flow}, Methods Appl. Anal. \textbf{16} (2009), no.~2, 139--155. \MR{2563745}

\bibitem[Wan11]{wang2011convex}
Xu-Jia Wang, \emph{Convex solutions to the mean curvature flow}, Ann. of Math.
  (2) \textbf{173} (2011), no.~3, 1185--1239. \MR{2800714}

\bibitem[Whi00]{white2000size}
Brian White, \emph{The size of the singular set in mean curvature flow of
  mean-convex sets}, J. Amer. Math. Soc. \textbf{13} (2000), no.~3, 665--695.
  \MR{1758759}

\bibitem[Whi03]{white2003nature}
\bysame, \emph{The nature of singularities in mean curvature flow of
  mean-convex sets}, J. Amer. Math. Soc. \textbf{16} (2003), no.~1, 123--138.
  \MR{1937202}

\bibitem[Zhu20]{zhu2020so}
Jingze Zhu, \emph{$ SO (2) $ symmetry of the translating solitons of the mean
  curvature flow in $\mathbb{R}^4$}, arXiv:2012.09319 (2020).

\end{thebibliography}

\end{document}